\newcommand{\C}{\mathbb{C}}
\newcommand{\Z}{\mathbb{Z}}
\newcommand{\G}{\mathcal{G}}
\newcommand{\E}{\mathcal{E}}
\newcommand{\Odd}{\mathcal{O}}
\newcommand{\X}{\mathcal{X}}
\newtheorem{theorem}{Theorem}
\newtheorem{corollary}{Corollary}
\newtheorem{definition}{Definition}
\newtheorem{example}{Example}
\newtheorem{lemma}{Lemma}
\newtheorem{proposition}{Proposition}
\numberwithin{equation}{section}
\begin{document}

\title{\textbf{Schur ring and Codes for $S$-subgroups over $\Z_{2}^{n}$}}
\author{Ronald Orozco L\'opez}

\newcommand{\Addresses}{{
  \bigskip
  \footnotesize

  \textit{E-mail address}, \texttt{rjol1805@hotmail.com}

}}

\maketitle

\begin{abstract}
In this paper the relationship between $S$-subgroups in $\Z_{2}^{n}$ and binary codes is shown. 
If the codes used are both $P(T)$-codes and $G$-codes, then the $S$-subgroup is free. The codes
constructed are cyclic, decimated or symmetric and the $S$-subgroups obtained are free under the
action the cyclic permutation subgroup, invariants under the action the decimated permutation 
subgroup and symmetric under the action of symmetric permutation subgroup, respectively. Also
it is shows that there is no codes generating whole $\Z_{2}^{n}$ in any $\G_{n}(a)$-complete
$S$-set of the $S$-ring $\mathfrak{S}(\Z_{2}^{n},S_{n})$.
\end{abstract}
{\bf Keywords:} Schur ring, codes, binary sequences, decimation, autocorrelation\\
{\bf Mathematics Subject Classification:} 05E15,94B60,05B20

\section{Introduction}

Let $G$ be a finite group with identity element $e$ and $\C[G]$ the group algebra of all formal sums 
$\sum_{g\in G}a_{g}g$, $a_{g}\in \C$, $g\in G$. For $T\subset G$, the element $\sum_{g\in T}g$ will 
be denoted by $\overline{T}$. Such an element is also called a $\textit{simple quantity}$. 
The transpose of $\overline{T} = \sum_{g\in G}a_{g}g$ is defined as $\overline{T}^{\top} = \sum_{g\in G}a_{g}(g^{-1})$. Let $\{T_{0},T_{1},...,T_{r}\}$ be a partition of $G$ and let $S$ be the subspace
of $\C[G]$ spanned by $\overline{T_{1}},\overline{T_{2}},...,\overline{T_{r}}$.  We say that $S$ is 
a $\textit{Schur ring}$ ($S$-ring, for short) over $G$ if: 

\begin{enumerate}
\item $T_{0} = \lbrace e\rbrace$, 
\item for each $i$, there is a $j$ such that $\overline{T_{i}}^{\top} = \overline{T_{j}}$,
\item for each $i$ and $j$, we have $\overline{T_{i}}\cdot\overline{T_{j}} = \sum_{k=1}^{r}\lambda_{i,j,k}\overline{T_{k}}$, for constants $\lambda_{i,j,k}\in\C$.
\end{enumerate}

The numbers $\lambda_{i,j,k}$ are the structure constants of $S$ with respect to the linear base 
$\{\overline{T_{0}},\overline{T_{1}},...,\overline{T_{r}}\}$. The sets $T_{i}$ are called the
\textit{basic sets} of the $S$-ring $S$. Any union of them is called an $S$-sets. Thus, 
$X\subseteq G$ is an $S$-set if and only if $\overline{X}\in S$. The set of all $S$-set is closed
with respect to taking inverse and product. Any subgroup of $G$ that is an $S$-set, is called an 
$S$-\textit{subgroup} of $G$ or $S$-\textit{group}. A partition
$\{T_{0},...,T_{r}\}$ of $G$ is called \textit{Schur partition} or $S$-\textit{partition} 
if $T_{0}=\{e\}$ and if for each $i$ there is some $j$ such that 
$T_{i}^{-1}=\{g^{-1}:g\in T_{i}\}=T_{j}$. It is known that there is a 1-1 correspondence between 
$S$-rings over $G$ and $S$-partitions of $G$. By using this correspondence, in this paper we will 
refer to an $S$-ring by mean of its $S$-partition.

The concept of $S$-ring was iniciated by I. Schur in their classical paper [1] which 
was published in 1933. Later, the theory of $S$-ring was developed for Wielandt [2].
But the main objective of theory was purely group theoretical concept, especially in
problem concerning the permutations groups. In the 80s and 90s, the theory received a notable 
impulse by the study of $S$-ring over cyclic groups and their applications to the graph theory 
[3],[4],[5],[6].

With the papers [8],[9] and [11] was initiated the research of $S$-ring over the group $\Z_{2}^{n}$
and was shown the relationship between this and Hadamard matrices, perfect binary sequences and
periodic compatible binary sequences. In this paper we will show the relationship between
$S$-ring over $\Z_{2}^{n}$ and binary codes. In particular we will use codes for to construct
$S$-subgroups over $\Z_{2}^{n}$. This point of view will be shown as an alternative to the cocyclic
matrices and to the difference sets used for to research hadamard matrices, since with the 
schur rings and its generator codes will be possible to understand the structure of the special
binary sequences above.

In this paper a code $\X_{n}$ generating whole $\Z_{2}^{n}$ is found. Then the others codes for 
$S$-subgroups are constructed by using $\X_{n}$ as a base. A code $\X_{n}^{\prime}$ will be called 
$G$-codes if there exists a permutation subgroup $G$ in $Aut(\Z_{2}^{n})$ such that 
$G\X_{n}^{\prime}=\X_{n}^{\prime}$. Other types of codes studied are the $P(T)$-codes, closely
related to the partition $P(T)$ of the subset $T$ of $N=\{0,1,...,n-1\}$. In fact, the 
$S$-subgroups constructed are both $P(T)$-codes and $G$-codes. This codes generating free
$S$-subgroups in $\Z_{2}^{n}$.

This paper is organized as follows. In section 2 basic concepts of theory of codes are shown.
In section 3 properties of $P(T)$-codes are stablished. Also is shown that a $P(T)$-code generates
a free subgroup in $\Z_{2}^{n}$ and that a $G$-code generates an $S$-subgroup in the same group.
In section 4 is shown the connection between $G$-codes and $\G_{n}(a)$-complete $S$-set.
$S$-subgroups and its generator codes in $S$-rings induced by the permutation subgroups
$C_{n}$, $\Delta_{n}$, $H_{n}C_{n}$, $\Delta_{n}C_{n}$ and $H_{n}\Delta_{n}C_{n}$ of 
$Aut(\Z_{2}^{n})$ are studied in the sections 5 to 10.

\section{Some terminology of the Theory of Codes}

In [11] the following terminology related to the theory of codes can be found

An arbitrary set $\mathcal{A}$ will be called an \textit{alphabet} and its elements are called
\textit{letters}. A finite sequence of letters written in the form $s_{1}s_{2}\cdots s_{n}$,
$n\geq0$, with every $s_{i}$ in $\mathcal{A}$, is called a \textit{word}. Any subsequence of 
consecutive letters of a word is a \textit{subword}. When $n=0$ the word is
the empty word and denoted by $1_{\mathcal{A}}$. Given a word $w=s_{1}s_{2}\cdots s_{n}$, the 
number $n$ is called the \textit{lenght} of $w$ and is denoted $l(w)$. Then the empty word
$1_{\mathcal{A}}$ has lenght $0$, i.e., $l(1_{\mathcal{A}})=0$. 

Let $\mathcal{A}^{*}$ denote all finite words defined on $\mathcal{A}$ and let $\mathcal{A}^{+}$
denote all finite nonempty words on $\mathcal{A}$. $\mathcal{A}^{*}$ is equipped with an 
associative binary operation obtained by concatenating two sequences: 
$$s_{1}s_{2}\cdots s_{n}\cdot t_{1}t_{2}\cdots t_{m}=s_{1}s_{2}\cdots s_{n}t_{1}t_{2}\cdots t_{m}.$$
The empty word is the unit element with respect to this operation and consequently the sets 
$\mathcal{A}^{*}$ and $\mathcal{A}^{+}$ are a monoid and a semigroup, respectivelly.

A \textit{factorization} of a word $s\in\mathcal{A}^{*}$ is a sequence $\{s_{1},s_{2},...,s_{n}\}$ of
$n\geq0$ words in $\Sigma^{*}$ such that $s=s_{1}s_{2}\cdots s_{n}$. For a subset $X$ of 
$\mathcal{A}^{*}$, we denote by $X^{*}$ the submonoid generated by $X$,

$$X^{*}=\{x_{1}x_{2}\cdots x_{n}\vert\ n\geq0,\ x_{i}\in X\}.$$

Similarly, we denote by $X^{+}$ the subsemigroup generated by $X$,

$$X^{^{+}}=\{x_{1}x_{2}\cdots x_{n}\vert\ n\geq1,\ x_{i}\in X\}.$$

By definition, each word $s$ in $X^{*}$ admits a least one factorization $x_{1}x_{2}\cdots x_{n}$
with all $x_{i}$ in $X$. Such a factorization is called an $X$-factorization.

A monoid $M$ is called \textit{free} if it has a subset $B$ such that:
\begin{enumerate}
\item $M=B^{*}$, and
\item For all $n,m\geq1$ and $x_{1},\cdots x_{n}$, $y_{1},\cdots y_{m}\in B$ we have
\begin{equation}\label{cond_free}
x_{1}\cdots x_{n}=y_{1}\cdots y_{m}\ \Rightarrow\ n=m\ \textsl{and}\ x_{i}=y_{i}\ \textsl{for}\ i=1,2,...,n. 
\end{equation}
\end{enumerate}
From condition 1., $B$ is a generating set of $M$ and condition 2. say us that each element in $M$
has an \textit{unique} representation as a product of elements of $B$. The set $B$ satisfying 1. y 2. 
is called a base of $M$.

Let $M$ be a monoid and $B$ its generating set. We say that $B$ is \textit{minimal} generating
set if no proper subset of $B$ is a generating set. A element $x$ of $M$ is called 
\textit{indecomposable} or \textit{atomic} if it cannot be expressed in the form $x=yz$ with
$y,z\neq1$.

Now, by reformulating the condition (\ref{cond_free}) we obtain the following definition. A subset 
$\mathcal{X}\subseteq\mathcal{A}^{*}$ is a \textit{code} if it satisfies the following condition:
For all $n,m\geq1$ and $x_{1},...,x_{n}$, $y_{1},...,y_{m}\in\mathcal{X}$
\begin{equation}
x_{1}\cdots x_{n}=y_{1}\cdots y_{m}\ \Rightarrow\ n=m\ \textsl{and}\ x_{i}=y_{i}\ \textsl{for}\ i=1,2,...,n. 
\end{equation}
Note that the empty word $1_{\mathcal{A}}$ is never in a code. The following theorem shows the
equivalence between codes and free generating set

\begin{theorem}
Let $\mathcal{X}\subseteq\mathcal{A}^{*}$. Then the following conditions are equivalent:
\begin{enumerate}
\item $\mathcal{X}$ is a code,
\item $\mathcal{X}$ is a free generating set, or a base, of the monoid $\mathcal{X}^{*}$,
\item $\mathcal{X}^{*}$ is free and $\mathcal{X}$ is its minimal generating set.
\end{enumerate}
\end{theorem}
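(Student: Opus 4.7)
\medskip

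\textbf{Plan.} I would establish the chain $(1)\Leftrightarrow(2)$, $(2)\Rightarrow(3)$, $(3)\Rightarrow(2)$, which suffices for the threefold equivalence. The first step $(1)\Leftrightarrow(2)$ is essentially a translation of definitions: by construction $\mathcal{X}$ already generates $\mathcal{X}^{*}$, so $\mathcal{X}$ is a base of $\mathcal{X}^{*}$ precisely when the uniqueness-of-factorization clause (\ref{cond_free}) holds on $\mathcal{X}$. But that clause is verbatim the defining condition of a code.

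For $(2)\Rightarrow(3)$, the first half of (3) is immediate from the definition of a free monoid. To prove minimality I argue by contradiction: suppose $\mathcal{X}'\subsetneq\mathcal{X}$ still generates $\mathcal{X}^{*}$ and pick $x\in\mathcal{X}\setminus\mathcal{X}'$. Any $\mathcal{X}'$-factorization $x=x_{1}\cdots x_{k}$ is also an $\mathcal{X}$-factorization; comparing it with the trivial factorization $x=x$ and invoking uniqueness for the base $\mathcal{X}$ forces $k=1$ and $x=x_{1}\in\mathcal{X}'$, contrary to the choice of $x$. The case $k=0$ is excluded because no base of a free monoid contains $1_{\mathcal{A}}$.

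For $(3)\Rightarrow(2)$ I would pass through the notion of atom. Let $\mathcal{B}$ be any base of $\mathcal{X}^{*}$ granted by its freeness. First observe that each $b\in\mathcal{B}$ is atomic in $\mathcal{X}^{*}$: if $b=uv$ with $u,v\neq 1_{\mathcal{A}}$, expanding $u$ and $v$ in the base $\mathcal{B}$ would produce two distinct $\mathcal{B}$-factorizations of $b$, contradicting the uniqueness clause. Next, for each $b\in\mathcal{B}$, write $b=x_{1}\cdots x_{k}$ with $x_{i}\in\mathcal{X}$ (possible since $\mathcal{X}$ generates); atomicity of $b$ together with $x_{i}\neq 1_{\mathcal{A}}$ forces $k=1$, so $b\in\mathcal{X}$. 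Hence $\mathcal{B}\subseteq\mathcal{X}$, and since $\mathcal{B}$ itself is a generating set contained in $\mathcal{X}$, the minimality of $\mathcal{X}$ gives $\mathcal{X}=\mathcal{B}$, which is (2).

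The main obstacle lies in this last implication. The hypothesis in (3) only asserts the existence of \emph{some} base, so one must in effect prove that the base of a free monoid is unique and coincides with its set of atoms; this is what pins $\mathcal{X}$ down and promotes a minimal generating set to a base. Once that identification is in hand, the rest is routine bookkeeping, the only subtlety being to consistently discard the empty word from each factorization that appears.
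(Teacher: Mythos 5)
The paper does not actually prove this theorem: it is stated in Section 2 as background imported from the theory-of-codes literature (reference [11], Berstel--Perrin), so there is no in-paper argument to compare yours against. Your proof is correct and complete, and it is essentially the standard textbook argument: $(1)\Leftrightarrow(2)$ is a direct translation of the uniqueness-of-factorization clause (\ref{cond_free}); $(2)\Rightarrow(3)$ by comparing an $\mathcal{X}'$-factorization of an omitted generator with its trivial factorization; and $(3)\Rightarrow(2)$ by showing every element of a base is atomic, hence any base $\mathcal{B}$ sits inside the generating set $\mathcal{X}$, and minimality forces $\mathcal{B}=\mathcal{X}$. One small point you gesture at but could nail down explicitly: in $(3)\Rightarrow(2)$ the step ``$x_{i}\neq 1_{\mathcal{A}}$'' needs the observation that minimality of $\mathcal{X}$ already excludes $1_{\mathcal{A}}\in\mathcal{X}$ (otherwise $\mathcal{X}\setminus\{1_{\mathcal{A}}\}$, or $\emptyset$ in the degenerate case, would be a smaller generating set), after which every $\mathcal{X}$-factorization of an atom has length exactly $1$. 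With that remark inserted, the argument is airtight and your implicit lemma --- that the base of a free monoid is unique and equals its set of atoms --- is exactly the right pivot.
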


On the other hand, let $G$ be a group with $\mathcal{A}\subseteq G$. Elements of $\mathcal{A}^{*}$
represent elements of $G$ closed under concatenation and inversion. The empty word represents 
$1_{G}$, the unity of $G$. Then $\mathcal{A}^{*}$ is a subgroup of $G$. A word 
$w=s_{1}s_{2}\cdots s_{n}$ in $\mathcal{A}^{*}$ is called \textit{reduced} if $w$ contains 
no subwords $xx^{-1}$ or $x^{-1}x$ for $x\in\mathcal{A}$. A group $G$ is called a free group if 
there exists a generating set $X$ of $G$ such that every non-empty reduced group word in $X$ 
defines a non-trivial element of $G$. Let $G$ be a free group on $X$. Then the cardinality of $X$ 
is called the rank of $G$.

\section{Schur rings and codes for $S$-subgroups over $\Z_{2}^{n}$}

In this paper denote by $\Z_{2}$ the cyclic group of order 2 with elements $+$
and $-$(where + and $-$ mean 1 and $-1$ respectively). Let 
$\Z_{2}^{n}=\overset{n}{\overbrace{\Z_{2}\times \cdots \times \Z_{2}}}$. Then all $X\in\Z_{2}^{n}$ are sequences of $+$ and $-$ and will be called $\Z_{2}$-\textit{sequences} or binary sequences.
All binary sequence in $\Z_{2}^{n}$ is of the form $(x_{0},x_{1},...,x_{n-1})$. 
Let $\textbf{1}$ denote the sequence $(1,1,...,1)$. As $X^{2}=\textbf{1}$ for all $X$ in 
$\Z_{2}^{n}$, then all reduced word in $\Z_{2}^{n}$ contains no the subword $XX$. Now we will find
a code generating whole $\Z_{2}^{n}$.

We define the following subset of $\Z_{2}^{n}$
\begin{eqnarray}\label{base_principal}
\mathcal{X}_{n}=
\left\{
\begin{array}{c}
X_{0}=-+\cdots++\\
X_{1}=+-\cdots++\\
\vdots\\
X_{n-2}=++\cdots-+\\
X_{n-1}=++\cdots+-
\end{array}
\right\}
\end{eqnarray}
where each $-$ is in the $i$-th position. In the following theorem we shall show that 
$\mathcal{X}_{n}$ is a base for all $\Z_{2}^{n}$

\begin{theorem}\label{theo_base_principal}
$\mathcal{X}_{n}$ is a code for $\Z_{2}^{n}$.
\end{theorem}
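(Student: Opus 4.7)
In this exponent-2 abelian setting the code property amounts to showing that $\mathcal{X}_n$ generates $\Z_2^n$ and that every element admits a unique reduced representation as a product of distinct letters from $\mathcal{X}_n$. I would establish this by a direct componentwise computation, since the sequences $X_i$ interact with each coordinate in a completely transparent way.

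The key observation, which drives the whole proof, is that for any subset $S\subseteq\{0,1,\ldots,n-1\}$ the product $\prod_{i\in S}X_i$ equals the binary sequence with $-$ in exactly the positions $j\in S$ and $+$ in the remaining positions. This follows coordinate by coordinate: since $X_i$ has $-$ only in position $i$, the componentwise product at coordinate $j$ is $-$ if and only if $X_j$ appears among the factors, i.e.\ if and only if $j\in S$. Both halves of the theorem fall out of this single fact. For generation, given $Y=(y_0,\ldots,y_{n-1})\in\Z_2^n$ one sets $S_Y:=\{j:y_j=-\}$ and checks directly that $\prod_{i\in S_Y}X_i=Y$. For uniqueness, if $\prod_{i\in S}X_i=\prod_{i\in T}X_i$, then reading off the set of $-$ positions on both sides forces $S=T$.

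Turning this into the formal code property requires one small bookkeeping step: since $\Z_2^n$ is abelian and $X_i^2=\mathbf{1}$, any word $X_{i_1}X_{i_2}\cdots X_{i_k}$ over $\mathcal{X}_n$ reduces, after cancelling squares and rearranging commuting factors, to $\prod_{i\in S}X_i$ for a unique subset $S\subseteq\{0,1,\ldots,n-1\}$; combined with the uniqueness established above, this gives the required unique factorization of reduced words over $\mathcal{X}_n$. The only genuine subtlety is this reconciliation of the monoid-theoretic code definition from Section~2 with the exponent-2 abelian structure of $\Z_2^n$: uniqueness must be read modulo commutativity and the relations $X_i^2=\mathbf{1}$, so that a reduced factorization corresponds to a subset of $\mathcal{X}_n$ rather than to an ordered tuple. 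Once that interpretation is fixed, the argument is a routine coordinatewise verification that $\mathcal{X}_n$ plays the role of the standard basis of $\Z_2^n$, and no substantive obstacle remains.
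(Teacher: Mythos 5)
Your proof is correct, but it takes a genuinely different route from the paper's. The paper argues by counting: every reduced word on $\mathcal{X}_n$ has the form $X_0^{\epsilon_0}\cdots X_{n-1}^{\epsilon_{n-1}}$ with $\epsilon_i\in\{0,1\}$, there are $\binom{n}{k}$ such words of length $k$, hence $2^n$ words in total, and since $\vert\Z_2^n\vert=2^n$ the correspondence is declared to be one-to-one. You instead compute the product coordinatewise and observe that $\prod_{i\in S}X_i$ is the sequence with $-$ exactly in the positions of $S$, which gives an explicit bijection between subsets of $\{0,\ldots,n-1\}$ and elements of $\Z_2^n$. Your version is the stronger one: the paper's cardinality match only yields a bijection once one already knows that distinct subsets produce distinct group elements (equivalently, that the map is injective or surjective), and that is precisely the step your minus-positions observation supplies explicitly. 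You also make explicit the bookkeeping point that both arguments rely on silently, namely that in the exponent-2 abelian group the monoid-theoretic code condition must be read modulo commutativity and the relations $X_i^2=\mathbf{1}$, so that a reduced factorization is a subset rather than an ordered tuple; the paper simply works with reduced words of the normal form above without comment. In short, your argument proves the same statement by exhibiting the bijection rather than counting both sides, and in doing so closes a small gap the paper leaves implicit.
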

\begin{proof}
As $\vert\mathcal{X}_{n}\vert=n$, then all word on $\mathcal{X}_{n}$ has the form 
$$w=X_{0}^{\epsilon_{0}}X_{1}^{\epsilon_{1}}\cdots X_{n-1}^{\epsilon_{n-1}},$$
with $\epsilon_{i}=0,1$. Thus, the number of words on $\mathcal{X}_{n}$ of lenght $k$ is 
$\binom{n}{k}$, with $k$ ranging in $[1,n-1]$. As the empty word corresponds to $\textbf{1}$
and as $X_{0}X_{1}\cdots X_{n-1}=-\textbf{1}$, the number total of words constructed 
with the codewords $X_{i}$ is $2^{n}$. Hence there exist a 1-1 correspondence between all words on
$\mathcal{X}_{n}$ and all binary sequences in $\Z_{2}^{n}$. Consequently
$\mathcal{X}_{n}^{*}=\Z_{2}^{n}$ as we announce.
\end{proof}

Let $Aut(\Z_{2}^{n})$ denote the automorphism group of $\Z_{2}^{n}$ and take $G$ by a subgroup
of $Aut(\Z_{2}^{n})$. We shall denote with $\mathfrak{S}(\Z_{2}^{n},G)$ an $S$-partition of
$\Z_{2}^{n}$ under the action of $G$. As $\mathcal{X}_{n}$ generates whole $\Z_{2}^{n}$, we wish
to find codes on $\mathcal{X}_{n}$, this is, with codewords factorizable on
$\mathcal{X}_{n}$, for $S$-subgroups of $\mathfrak{S}(\Z_{2}^{n},G)$. We start with
the following definition

\begin{definition}
Let $T=\{i_{1},i_{2},...,i_{r}\}$ be a subset of $N=\{0,1,...,n-1\}$ and let 
$P(T)=\{T_{1},...,T_{s}\}$ denote a partition on $T$. A code $\X_{n}^{\prime}$ on $\X_{n}$ is a 
$P(T)$-code if $\X_{n}^{\prime}=\{Y_{T_{1}},...,Y_{T_{s}}\}$, where 
$Y_{T_{j}}=X_{j_{1}}\cdots X_{j_{k}}$ and $T_{j}=\{j_{1},...,j_{k}\}$. 
\end{definition}

The map $T_{j}\mapsto Y_{T_{j}}$ establishes an 1-1 correspondence between the blocks $T_{j}$ of 
$T$ and the codewords $Y_{T_{j}}$ of $\X_{n}^{\prime}$. Then it is easily inferred that

\begin{theorem}\label{theo_free_group_code}
Let $\mathcal{X}_{n}^{\prime}$ be a $P(T)$-code. Then
\begin{equation}
\vert\mathcal{X}_{n}^{\prime*}\vert=2^{\vert\mathcal{X}_{n}^{\prime}\vert}.
\end{equation}
We will call to $\mathcal{X}_{n}^{\prime*}$ a $P(T)$-free group.
\end{theorem}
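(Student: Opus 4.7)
The plan is to recast the claim as a linear-independence statement over $\mathbb{F}_{2}$. I would first pass to additive notation by identifying $\Z_{2}^{n}$ with $\mathbb{F}_{2}^{n}$, so that componentwise multiplication of $\pm$-sequences becomes coordinatewise addition. Under this identification each basic codeword $X_{i}$ is the standard basis vector $e_{i}$, and each composite codeword $Y_{T_{j}}=X_{j_{1}}\cdots X_{j_{k}}$ is the characteristic vector $\chi_{T_{j}}\in\mathbb{F}_{2}^{n}$ of the block $T_{j}$. Because every element of $\Z_{2}^{n}$ has order two, the submonoid $\X_{n}^{\prime*}$ collapses onto the subgroup generated by $\X_{n}^{\prime}$, which additively is the $\mathbb{F}_{2}$-span of $\chi_{T_{1}},\ldots,\chi_{T_{s}}$, where $s=\vert\X_{n}^{\prime}\vert$.

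The proposition then reduces to showing that the $s$ vectors $\chi_{T_{1}},\ldots,\chi_{T_{s}}$ are linearly independent over $\mathbb{F}_{2}$, since an $s$-dimensional $\mathbb{F}_{2}$-subspace has exactly $2^{s}$ elements. For this I would take an arbitrary non-empty $J\subseteq\{1,\ldots,s\}$ and compute $\sum_{j\in J}\chi_{T_{j}}$. Since $P(T)$ is a partition, its blocks are pairwise disjoint, so this sum equals $\chi_{\bigsqcup_{j\in J}T_{j}}$; and since each $T_{j}$ is non-empty, the support of the resulting vector is non-empty, hence the sum is non-zero. This rules out every non-trivial linear relation.

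Equivalently, the same disjointness argument produces an explicit bijection between subsets of $P(T)$ and elements of $\X_{n}^{\prime*}$: a subfamily $\{T_{i_{1}},\ldots,T_{i_{k}}\}$ is sent to $Y_{T_{i_{1}}}\cdots Y_{T_{i_{k}}}$, and one recovers the subfamily by reading off the positions of ``$-$'' in the product and using disjointness to split this set uniquely into blocks. The only place requiring any care is the initial collapse -- that a word over $\X_{n}^{\prime}$ can always be rewritten as a product of pairwise distinct codewords -- which follows immediately from $Y_{T_{j}}^{2}=\textbf{1}$. Once that bookkeeping is in place, the theorem is just the elementary fact that the characteristic vectors of the blocks of a partition are $\mathbb{F}_{2}$-linearly independent.
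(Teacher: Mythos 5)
Your proof is correct, and it takes a cleaner route than the paper's. The paper argues combinatorially: it asserts that every word on $\X_{n}^{\prime}$ is irreducible, counts $\binom{\vert\X_{n}^{\prime}\vert}{k}$ words of each length $k$, and sums the binomial coefficients to obtain $2^{\vert\X_{n}^{\prime}\vert}$. That argument leaves implicit the one point that actually needs checking, namely that distinct subsets of the code yield distinct group elements; it is buried in the word ``irreducible.'' Your reformulation over $\mathbb{F}_{2}$ makes exactly this point explicit: after identifying $\Z_{2}^{n}$ with $\mathbb{F}_{2}^{n}$, the codewords $Y_{T_{j}}$ become the characteristic vectors $\chi_{T_{j}}$ of the blocks, and the pairwise disjointness of the blocks of $P(T)$ gives $\sum_{j\in J}\chi_{T_{j}}=\chi_{\bigcup_{j\in J}T_{j}}\neq 0$ for every nonempty $J$, i.e.\ linear independence, hence a subgroup of order $2^{s}$ with $s=\vert\X_{n}^{\prime}\vert$. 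The two proofs rest on the same underlying fact (disjointness of the $T_{j}$), but yours replaces the count of irreducible words by a dimension count and supplies the injectivity step that the paper's proof takes for granted. Your closing remark about rewriting an arbitrary word as a product of pairwise distinct codewords, using $Y_{T_{j}}^{2}=\textbf{1}$ together with commutativity, is exactly the bookkeeping needed to see that the submonoid $\X_{n}^{\prime*}$ coincides with the generated subgroup, so nothing is missing.
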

\begin{proof}
Let $P(T)=\{T_{1},...,T_{s}\}$ be a partition of some subset $T$ of $N$. Then all word on
$\mathcal{X}_{n}^{\prime}$ is irreducible. Hence the number of words on $\mathcal{X}_{n}^{\prime}$
of lenght $k$ is $\binom{\vert\mathcal{X}^{\prime}\vert}{k}$ and
\begin{equation*}
\vert\X_{n}^{\prime*}\vert=1+\sum_{k=1}^{\vert\X_{n}^{\prime}\vert}\binom{\vert\X_{n}^{\prime}\vert}{k}=2^{\vert\X_{n}^{\prime}\vert}
\end{equation*}
\end{proof}

\begin{corollary}
$\X_{n}$ in (\ref{base_principal}) is the only $P(T)$-code generating whole $\Z_{2}^{n}$.
\end{corollary}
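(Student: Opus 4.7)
The plan is to derive the corollary directly from Theorem \ref{theo_free_group_code} together with the combinatorial restriction that $P(T)$ is a partition of a subset of $N=\{0,1,\dots,n-1\}$. No sophisticated machinery is required; the argument is a cardinality count that forces the partition to be the finest possible one on the largest possible set.

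First I would let $\X_n^{\prime}$ be an arbitrary $P(T)$-code that generates $\Z_2^n$, say determined by a partition $P(T)=\{T_1,\dots,T_s\}$ of a subset $T\subseteq N$. By hypothesis $\X_n^{\prime *}=\Z_2^n$, so $|\X_n^{\prime *}|=2^n$. On the other hand, Theorem \ref{theo_free_group_code} gives $|\X_n^{\prime *}|=2^{|\X_n^{\prime}|}=2^s$, since the codewords $Y_{T_j}$ are in bijection with the blocks $T_j$. Comparing the two expressions yields $s=n$.

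Next I would use the defining constraints of a partition: because the $T_j$ are nonempty and pairwise disjoint subsets of $N$, we have $s\leq \sum_{j=1}^{s}|T_j|=|T|\leq |N|=n$. The equality $s=n$ forces both inequalities to be equalities, which means $|T|=n$ (so $T=N$) and $|T_j|=1$ for every $j$. Thus $P(T)$ is the partition of $N$ into singletons, and the map $T_j\mapsto Y_{T_j}$ sends each singleton $\{i\}$ to $X_i$. Therefore $\X_n^{\prime}=\{X_0,X_1,\dots,X_{n-1}\}=\X_n$, proving uniqueness.

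There is no real obstacle here; the only thing to be careful about is making it explicit that $|\X_n^{\prime}|$ equals the number of blocks $s$ (which follows from the stated bijection $T_j\leftrightarrow Y_{T_j}$), and that $s\leq |T|$ uses only that the blocks are nonempty. Everything else is a one-line consequence of Theorem \ref{theo_free_group_code}.
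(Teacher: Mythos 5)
Your proof is correct and follows essentially the same route the paper intends: the cardinality formula $|\X_n^{\prime*}|=2^{|\X_n^{\prime}|}$ from Theorem \ref{theo_free_group_code} forces $n$ blocks, hence singletons covering all of $N$. The paper's own proof merely asserts that the conclusion ``follows from here''; you have supplied exactly the counting argument ($s=n$ together with $s\leq|T|\leq n$) that the author leaves implicit.
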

\begin{proof}
$\X_{n}$ is a $P(T)$-code with $P(T)=\{\{0\},\{1\},...,\{n-1\}\}$. The statement is followed from
here.
\end{proof}

Now, we will find the number of $P(T)$-free subgroups in $\Z_{2}^{n}$

\begin{theorem}
The number of $P(T)$-free subgroup in $\Z_{2}^{n}$ is $B_{\vert T\vert+1}$
where $B_{\vert T\vert}$ are the Bell numbers.
\end{theorem}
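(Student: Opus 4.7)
The plan is to interpret $P(T)$-free subgroups as partitions of an augmented $(n+1)$-element set and then apply the classical count by Bell numbers. First I would observe that a $P(T)$-code is uniquely determined by the data $(T, P)$, where $T \subseteq N = \{0, 1, \ldots, n-1\}$ and $P = \{T_1, \ldots, T_s\}$ is a set partition of $T$. Tracing through the definition of $Y_{T_j}$ as the product of the $X_i$ for $i \in T_j$, one sees that the generated subgroup $\X_n^{\prime*}$ consists exactly of those binary sequences which carry the value $+$ on every position outside $T$ and take a common value on the positions within each block of $P$. From this description, the pair $(T, P)$ is recoverable from the subgroup alone: $T$ is the set of coordinates at which some element of the subgroup is negative, and two indices of $T$ belong to the same block of $P$ precisely when they agree on every element of the subgroup. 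This injectivity step is the crux of the argument.

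Next I would set up a bijection between such pairs $(T, P)$ and partitions of the $(n+1)$-element set $N \cup \{\ast\}$, where $\ast$ is an added distinguished symbol. Given $(T, P)$, send it to the partition of $N \cup \{\ast\}$ whose blocks are the blocks of $P$ together with the single block $(N \setminus T) \cup \{\ast\}$. The inverse map identifies the block containing $\ast$, deletes $\ast$ to recover $N \setminus T$, and reads off the remaining blocks as $P$. This is clearly a bijection.

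Since partitions of an $(n+1)$-element set are counted by the Bell number $B_{n+1}$, the number of $P(T)$-free subgroups of $\Z_2^n$ equals $B_{n+1}$. Equivalently, grouping by the underlying support $T$,
\[
\sum_{T \subseteq N} B_{|T|} \;=\; \sum_{k=0}^{n} \binom{n}{k} B_k \;=\; B_{n+1},
\]
by the standard recurrence for Bell numbers, where $\binom{n}{k}$ counts the choices of $T$ of size $k$ and $B_k$ counts the partitions of $T$. The main obstacle, as noted, is verifying the injectivity in the first paragraph; once it is established, the bijection and the count follow routinely.
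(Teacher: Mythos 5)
Your proof is correct and terminates at the very identity the paper invokes, $\sum_{k=0}^{n}\binom{n}{k}B_{k}=B_{n+1}$, so the underlying count is the same: for each support $T\subseteq N$ the $P(T)$-free subgroups correspond to the $B_{|T|}$ partitions of $T$, and one sums over all $T$. What you add, and what the paper's proof omits, is the verification that this correspondence is injective at the level of subgroups: the paper's ``by the correspondence is clear'' refers only to the bijection between blocks $T_{j}$ and codewords $Y_{T_{j}}$, not to the fact that distinct pairs $(T,P)$ generate distinct subgroups of $\Z_{2}^{n}$. Your recovery of the data from the subgroup alone --- $T$ as the set of coordinates where some element is negative, and the blocks of $P$ as the classes of coordinates of $T$ that agree on every element --- is exactly the missing step, and it works because $\prod_{j\in S}Y_{T_{j}}$ is negative precisely on $\bigcup_{j\in S}T_{j}$ (the blocks being disjoint). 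Your second refinement, replacing the appeal to the Bell recurrence by the explicit bijection with partitions of $N\cup\{\ast\}$ in which the block containing $\ast$ records $N\setminus T$, is the standard bijective proof of that recurrence; it buys transparency rather than new content. One point both treatments should state plainly: the ``$B_{|T|+1}$'' in the theorem only makes sense read as $B_{n+1}$, since $T$ is being summed over.
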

\begin{proof}
By the correspondence is clear that the number of $P(T)$-free subgroups for any subset $T$ is
$B_{\vert T\vert}$. As $\binom{n}{\vert T\vert}$ indicates the number of $\vert T\vert$-element
subsets of an $n$-element set, then $\binom{n}{\vert T\vert}B_{\vert T\vert}$ indicates the number
of $P(T)$-free subgroups with fixed size. Assuming that $B_{0}$ is the number of empty words we
obtain $\sum_{\vert T\vert=0}^{n}\binom{n}{\vert T\vert}B_{\vert T\vert}=B_{\vert T\vert+1}$.
\end{proof}

For example, the following are all $P(T)$-free subgroup of $\Z_{2}^{3}$
\begin{eqnarray*}
&&\{X_{0},X_{1},X_{2}\}^{*},\{X_{0}X_{1},X_{2}\}^{*},\{X_{0}X_{2},X_{1}\}^{*},\{X_{1}X_{2},X_{0}\}^{*}\\
&&\{X_{0}X_{1}X_{2}\}^{*}\\
&& \{X_{0},X_{1}\}^{*},\{X_{0}X_{1}\}^{*},\{X_{0},X_{2}\}^{*},\{X_{0}X_{2}\}^{*},\{X_{1},X_{2}\}^{*},\{X_{1}X_{2}\}^{*}\\
&&\{X_{0}\}^{*},\{X_{1}\}^{*},\{X_{2}\}^{*},\\
&&\{1\}^{*}
\end{eqnarray*}

The reason for deal with $P(T)$-code will be showed now. Let
\begin{equation}
\mathcal{X}_{7}=\left\{
\begin{array}{c}
X_{0}=-++++++\\
X_{1}=+-+++++\\
X_{2}=++-++++\\
X_{3}=+++-+++\\
X_{4}=++++-++\\
X_{5}=+++++-+\\
X_{6}=++++++-\\
\end{array}
\right\}
\end{equation}
a code for $\Z_{2}^{7}$ and define the set
\begin{equation}\label{G_set_not_code}
\mathcal{X}_{7}^{\prime}=\{X_{3}X_{5}X_{6},X_{2}X_{4}X_{5},X_{1}X_{3}X_{4},X_{0}X_{2}X_{3},
X_{6}X_{1}X_{2},X_{5}X_{0}X_{1},X_{4}X_{6}X_{0}\}.
\end{equation}
on $\mathcal{X}_{7}$. It is easy to show that $\mathcal{X}_{7}^{\prime}$ is not a code. For example, 
$X_{0}X_{2}X_{5}X_{6}$ has at least two factorization in $\mathcal{X}^{\prime}$, namely
$X_{0}X_{2}X_{3}\cdot X_{3}X_{5}X_{6}$ and $X_{2}X_{4}X_{5}\cdot X_{4}X_{6}X_{0}$.
Also, $\vert\mathcal{X}_{7}^{\prime*}\vert=16$ and not $128$ as desirable. Hence 
$\mathcal{X}_{7}^{\prime*}$ is not free group. However, from theorem \ref{theo_free_group_code}
a $P(T)$-code always generates a free group.

The following theorem say us as to obtain new $P(T)$-free subgroup from old. 

\begin{theorem}
Let $\mathcal{X}_{i}$ be $P(T_{i})$-codes, $1\leq i\leq r$, in $\Z_{2}^{n}$ such that 
$T_{i}\cap T_{j}=\emptyset$, $i\neq j$, and $T_{i}\subset N$. Then
\begin{equation}
\left(\bigcup_{i=1}^{r}\mathcal{X}_{i}\right)^{*}=\prod_{i=1}^{r}\mathcal{X}_{i}^{*}.
\end{equation}
and
\begin{equation}
\vert\prod_{i=1}^{r}\mathcal{X}_{i}^{*}\vert=2^{\sum_{i=1}^{r}\vert\mathcal{X}_{i}\vert}
\end{equation}
\end{theorem}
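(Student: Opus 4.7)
The plan is to exploit two structural facts that have already been established or are immediate in $\Z_{2}^{n}$: the group is abelian with every element an involution ($X^{2}=\mathbf{1}$), and each $P(T_{i})$-code produces elements whose ``support'' (the positions carrying a $-$) is confined to $T_{i}$. Since the $T_{i}$ are pairwise disjoint, the subgroups $\mathcal{X}_{i}^{*}$ will be forced to intersect trivially, and unique factorization of products follows essentially for free.

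First I would prove the equality of sets. The inclusion $\prod_{i=1}^{r}\mathcal{X}_{i}^{*}\subseteq\bigl(\bigcup_{i=1}^{r}\mathcal{X}_{i}\bigr)^{*}$ is immediate, since any product of codewords from the individual $\mathcal{X}_{i}$ is a word over the union. For the reverse inclusion, take an arbitrary word $w=y_{1}y_{2}\cdots y_{m}$ with each $y_{k}\in\bigcup_{i=1}^{r}\mathcal{X}_{i}$. Because $\Z_{2}^{n}$ is abelian we may permute the factors and collect together those coming from the same code $\mathcal{X}_{i}$. This rewrites $w$ as a product $z_{1}z_{2}\cdots z_{r}$ with $z_{i}\in\mathcal{X}_{i}^{*}$, proving $w\in\prod_{i=1}^{r}\mathcal{X}_{i}^{*}$.

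The second, more substantive, step is the cardinality formula, which requires that the product factorization $z_{1}\cdots z_{r}$ is unique. Here I would introduce the support map $\operatorname{supp}\colon\Z_{2}^{n}\to 2^{N}$ sending a sequence to the set of positions where it has a $-$. An element of $\mathcal{X}_{i}^{*}$ is, after reduction, a product of a subset of the codewords $Y_{T_{i,1}},\ldots,Y_{T_{i,s_{i}}}$, and each such codeword $Y_{T_{i,k}}$ has support exactly $T_{i,k}\subseteq T_{i}$. Hence $\operatorname{supp}(z_{i})\subseteq T_{i}$ for every $z_{i}\in\mathcal{X}_{i}^{*}$. Since the $T_{i}$ are pairwise disjoint, from $z_{1}\cdots z_{r}=z_{1}'\cdots z_{r}'$ and componentwise multiplication in $\Z_{2}^{n}$ we get $\operatorname{supp}(z_{i})=\operatorname{supp}(z_{i}')$ for each $i$, i.e.\ $z_{i}=z_{i}'$. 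Therefore $\bigl|\prod_{i=1}^{r}\mathcal{X}_{i}^{*}\bigr|=\prod_{i=1}^{r}|\mathcal{X}_{i}^{*}|$, and applying Theorem~\ref{theo_free_group_code} to each factor gives $\prod_{i=1}^{r}|\mathcal{X}_{i}^{*}|=\prod_{i=1}^{r}2^{|\mathcal{X}_{i}|}=2^{\sum_{i=1}^{r}|\mathcal{X}_{i}|}$.

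The only thing that requires any care is the support argument, and even there the disjointness of the $T_{i}$ makes it essentially automatic; the main potential obstacle is merely notational (keeping track of the blocks of $P(T_{i})$ inside each code), not conceptual. No additional hypothesis beyond disjointness of the $T_{i}$ is needed, and the proof uses nothing deeper than commutativity, involutivity, and the previously proved free-group count.
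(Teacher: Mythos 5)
Your proof is correct, and it is considerably more explicit than the one in the paper, which consists of the single line ``Follows by induction on number of $T_{i}$-codes $\mathcal{X}_{i}$.'' The underlying mechanism is the same in both cases --- the pairwise disjointness of the $T_{i}$ forces the subgroups $\mathcal{X}_{i}^{*}$ to have disjoint supports and hence trivial pairwise intersections --- but you organize it as a direct argument via the support map rather than as an induction on $r$. The inductive route would reduce everything to the two-factor case $(\mathcal{X}_{1}\cup\mathcal{X}_{2})^{*}=\mathcal{X}_{1}^{*}\mathcal{X}_{2}^{*}$ with $\mathcal{X}_{1}^{*}\cap\mathcal{X}_{2}^{*}=\{\mathbf{1}\}$, which is essentially your support observation in disguise; your version has the advantage of making the unique-factorization step (that $z_{1}\cdots z_{r}=z_{1}'\cdots z_{r}'$ forces $z_{i}=z_{i}'$, because an element of $\Z_{2}^{n}$ is determined by its support and $\operatorname{supp}(z_{i})\subseteq T_{i}$) fully visible, and this is exactly the point the cardinality formula hinges on. One tiny remark: your reduction of an arbitrary element of $\mathcal{X}_{i}^{*}$ to a product of a \emph{subset} of the codewords quietly uses both commutativity and $X^{2}=\mathbf{1}$, which you do state up front; it is worth keeping that dependence explicit since it is what makes ``support determines the element'' work.
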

\begin{proof}
Follows by induction on number of $T_{i}$-codes $\X_{i}$.
\end{proof}

Now we will obtain a necessary condition for the existence of an $S$-subgroup

\begin{theorem}\label{theo_basic_S_subgroup}
Let $G$ be a permutation automorphic subgroup of $Aut(\Z_{2}^{n})$ acting on some
set $\mathcal{X}$ in $\Z_{2}^{n}$. Then $\mathcal{X}^{*}$ is an $S$-subgroup in
$\mathfrak{S}(\Z_{2}^{n},G)$.
\end{theorem}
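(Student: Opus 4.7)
The plan is to interpret the hypothesis as saying that $\mathcal{X}$ is setwise $G$-invariant, i.e.\ $g\mathcal{X}=\mathcal{X}$ for every $g\in G$, and then to verify the two requirements of being an $S$-subgroup: (i) $\mathcal{X}^{*}$ is a subgroup of $\Z_{2}^{n}$, and (ii) $\mathcal{X}^{*}$ is an $S$-set of $\mathfrak{S}(\Z_{2}^{n},G)$, i.e.\ a union of basic sets of this $S$-ring. Recall that the basic sets of $\mathfrak{S}(\Z_{2}^{n},G)$ are precisely the $G$-orbits on $\Z_{2}^{n}$, so (ii) is equivalent to $G$-invariance of $\mathcal{X}^{*}$.

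First I would dispose of (i). Since every element of $\Z_{2}^{n}$ satisfies $X^{2}=\mathbf{1}$, the submonoid $\mathcal{X}^{*}\subseteq\Z_{2}^{n}$ is automatically closed under inversion ($X^{-1}=X$), so $\mathcal{X}^{*}$ is in fact a subgroup of $\Z_{2}^{n}$. This is essentially free from the earlier observation that reduced words in $\Z_{2}^{n}$ contain no repeated letters, and no real work is needed here.

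Next I would establish (ii). The key point is that $G$ acts by group automorphisms of $\Z_{2}^{n}$. Pick an arbitrary element $w=X_{i_{1}}X_{i_{2}}\cdots X_{i_{k}}\in\mathcal{X}^{*}$ and an arbitrary $g\in G$. Applying $g$ and using that $g$ is a homomorphism,
\begin{equation*}
g(w)=g(X_{i_{1}})\,g(X_{i_{2}})\cdots g(X_{i_{k}}).
\end{equation*}
By the hypothesis $g\mathcal{X}=\mathcal{X}$, each factor $g(X_{i_{j}})$ lies in $\mathcal{X}$, so $g(w)\in\mathcal{X}^{*}$. This shows $G\mathcal{X}^{*}\subseteq\mathcal{X}^{*}$, and applying the same argument to $g^{-1}$ gives the reverse inclusion. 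Therefore $\mathcal{X}^{*}$ is a union of $G$-orbits and hence an $S$-set. Combining with (i), $\mathcal{X}^{*}$ is an $S$-subgroup of $\mathfrak{S}(\Z_{2}^{n},G)$.

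The only real conceptual point is recognising that the two requirements decouple cleanly: the group-theoretic content (closure) is automatic in $\Z_{2}^{n}$, and the $S$-ring content reduces to the fact that the set of generators is $G$-invariant and $G$ acts by automorphisms, so invariance propagates to the generated subgroup. I do not anticipate a real obstacle; the main thing to be careful about is making the reading of ``acting on $\mathcal{X}$'' explicit as $g\mathcal{X}=\mathcal{X}$ for all $g\in G$, since without that no $G$-invariance of words can be concluded.
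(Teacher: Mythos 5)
Your proof is correct and follows essentially the same route as the paper: apply an arbitrary $g\in G$ to a word in $\mathcal{X}^{*}$, use that $g$ is an automorphism permuting the generating set $\mathcal{X}$, and conclude that $\mathcal{X}^{*}$ is $G$-invariant and hence a union of orbits. You are in fact slightly more careful than the paper, since you make the reading of ``acting on $\mathcal{X}$'' explicit as $g\mathcal{X}=\mathcal{X}$, note that closure under inversion is automatic in $\Z_{2}^{n}$, and supply the reverse inclusion via $g^{-1}$.
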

\begin{proof}
We take a word $Y_{i_{1}}Y_{i_{2}}\cdots Y_{i_{r}}$ in $\X^{\prime*}$ and a $g$ in $G$. Then
\begin{eqnarray*}
g(Y_{i_{1}}Y_{i_{2}}\cdots Y_{i_{r}})&=&g(Y_{i_{1}})g(Y_{i_{2}})\cdots g(Y_{i_{r}})\\
&=&Y_{j_{1}}Y_{j_{2}}\cdots Y_{j_{r}}.
\end{eqnarray*}
As $g$ is arbritary, then $g(Y_{i_{1}}Y_{i_{2}}\cdots Y_{i_{r}})$ is in $\mathcal{X}^{\prime*}$ 
for all $g$ in $G$. Hence $G$ defines a partition on $\X^{\prime*}$ and $\X^{\prime*}$ is
an $S$-subgroup of $\mathfrak{S}(\Z_{2}^{n},G)$.
\end{proof}

Not all $S$-subgroup is free. $\X_{7}^{\prime*}$ in (\ref{G_set_not_code}) is an $S$-subgroup of
$\mathfrak{S}(\Z_{2}^{7},C_{7})$, where $C_{7}=\left\langle C\right\rangle$ is the cyclic 
permutation automorphic subgroup of $Aut(\Z_{2}^{7})$ of order 7 with $C$ the cyclic permutation
acting on all component of some $Y$ in $\Z_{2}^{7}$. But $\X_{7}^{\prime*}$ is not a free subgroup of 
$\Z_{2}^{7}$. 

Again let $G$ be a permutation automorphic subgroup of $Aut(\Z_{2}^{n})$ and let $Y_{G}$ denote the
orbit of some $Y$ in $\Z_{2}^{n}$ under the action of $G$. From previous theorem $Y_{G}^{*}$ is an 
$S$-subgroup. We will called to $Y_{G}^{*}$ a \textit{basic} $S$-subgroup of 
$\mathfrak{S}(\Z_{2}^{n},G)$. If for a code $\X$ it is true that $\X=Y_{G}$ for some $Y$ in 
$\Z_{2}^{n}$, then we will say that $\X$ is a $G$-code. In the following sections we construct
$S$-subgroups by using $G$-codes.

\section{Schur ring $\mathfrak{S}(\Z_{2}^{n},S_{n})$}

Let $\omega(X)$ denote the Hamming weight of $X\in\Z_{2}^{n}$. Thus, $\omega(X)$ is the number of 
$+$ in any $\Z_{2}-$sequences $X$ of $\Z_{2}^{n}$. Now let $\G_{n}(k)$ be the subset of 
$\Z_{2}^{n}$ such that $\omega(X)=k$ for all $X\in\G_{n}(k)$, where $0\leq k\leq n$. 

We let $T_{i}=\G_{n}(n-i)$. It is straightforward to prove that the partition 
$\mathfrak{S}(\Z_{2}^{n},S_{n})=\{\G_{n}(0),...,\G_{n}(n)\}$ induces an $S$-partition over 
$\Z_{2}^{n}$, where $S_{n}\leq Aut(\Z_{2}^{n})$ is the permutation group on $n$ objects. From [7] 
it is know that the constant structure $\lambda_{i,j,k}$ is equal to
\begin{equation}\label{estruc_cons}
\lambda_{i,j,k}=
\begin{cases}
0&\mbox{if } i+j-k\ \mbox{is an odd number}\\
\binom{k}{(j-i+k)/2}\binom{n-k}{(j+i-k)/2} &\mbox{if } i+j-k\ \mbox{is an even number}
\end{cases}
\end{equation}

From (\ref{estruc_cons}) follows that
\begingroup\makeatletter\def\f@size{11}\check@mathfonts
\begin{equation}\label{producto}
\G_{n}(a)\G_{n}(b)=
\begin{cases}
\bigcup\limits_{i=0}^{a}\G_{n}(n-a-b+2i), & 0\leq a\leq \left[\dfrac{n}{2}\right], a\leq b\leq n-a,\\
\bigcup\limits_{i=0}^{n-a}\G_{n}(a+b-n+2i), & \left[\dfrac{n}{2}\right]+1\leq a\leq n, n-a\leq b\leq a.  
\end{cases}
\end{equation}
\endgroup

From (\ref{base_principal}) we know that $\X_{n}=\G_{n}(n-1)$. Then $\X_{n}$ is an $S_{n}$-code for
$\mathfrak{S}(\Z_{2}^{n},S_{n})$. In the following corollary we found another $S_{n}$-code for 
$\mathfrak{S}(\Z_{2}^{n},S_{n})$

\begin{corollary}
$\G_{n}(1)$ is an $S_{n}$-code for $\mathfrak{S}(\Z_{2}^{n},S_{n})$.
\end{corollary}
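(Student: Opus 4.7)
My plan is to verify the two ingredients that every ``$G$-code'' argument in this paper uses: that $\G_n(1)$ arises as an $S_n$-orbit, and that $\G_n(1)$ is a code. The first is a straightforward verification; the second is the substantive part.

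First I would identify $\G_n(1)$ explicitly as $\{Z_0,\ldots,Z_{n-1}\}$, where $Z_i$ is the binary sequence with $+$ only at position $i$, and record the factorisations $Z_i = \prod_{j\neq i} X_j$ in $\Z_2^n$ which place $\G_n(1)\subset \X_n^*$. Since $S_n$ acts on $\Z_2^n$ by coordinate permutation, preserves the Hamming weight $\omega$, and the transposition $(i\ j)\in S_n$ sends $Z_i$ to $Z_j$, the set $\G_n(1)$ is precisely the $S_n$-orbit of $Z_0$. By Theorem \ref{theo_basic_S_subgroup} this already implies $\G_n(1)^*$ is an $S$-subgroup of $\mathfrak{S}(\Z_2^n,S_n)$.

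For the code property itself I would exploit the identity $Z_i = (-\mathbf{1})\, X_i$. Given an equality $\prod_{i\in S_1}Z_i = \prod_{j\in S_2}Z_j$ in $\Z_2^n$, pulling the sign factors through rewrites it as a relation $(-\mathbf{1})^{|S_1|-|S_2|}\prod_{S_1}X_i = \prod_{S_2}X_j$ among $\X_n$-words. Since $\X_n$ is a code by Theorem \ref{theo_base_principal}, the analogue of unique factorisation for $\X_n$ together with the identity $\prod_{i\in N}X_i=-\mathbf{1}$ should reduce any such equality to $S_1 = S_2$.

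The step I expect to require the most care is precisely this last one: the sign $(-\mathbf{1})^{|S_1|-|S_2|}$ couples with $\prod_N X_i = -\mathbf{1}$, so a ``complementary collapse'' of the form $\prod_{S}Z_i = \prod_{N\setminus S}Z_i$ must be excluded; handling the interplay between the parity of $|S|$ and the parity of $n$ is the real technical core of the argument. Once this parity bookkeeping is settled, $\G_n(1)$ is both a code and an $S_n$-orbit in $\Z_2^n$, hence an $S_n$-code for $\mathfrak{S}(\Z_2^n,S_n)$ in the sense introduced earlier.
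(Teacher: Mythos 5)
Your plan is sound up to its final step, and you have put your finger on exactly the right obstruction --- but that obstruction cannot be ``settled'': it is fatal whenever $n$ is odd. Writing $Z_i=-X_i$, one has $\prod_{i\in S}Z_i=(-\mathbf{1})^{|S|}\prod_{i\in S}X_i$, and since $\prod_{i\in N\setminus S}X_i=(-\mathbf{1})\prod_{i\in S}X_i$, the products $\prod_{S}Z_i$ and $\prod_{N\setminus S}Z_i$ differ by the factor $(-\mathbf{1})^{\,n-2|S|+1}$; hence they coincide for \emph{every} $S$ precisely when $n$ is odd. Concretely, in $\Z_2^3$ one has $Z_0Z_1=(+--)(-+-)=(--+)=Z_2$, so $\G_3(1)$ is not a code and $|\G_3(1)^{*}|=2^{n-1}$ rather than $2^{n}$. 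This is even consistent with the paper's own later remark that $\G_{4n+3}(2n+1)$ generates only $\Odd_{4n+3}$, of which $\G_3(1)$ is the first instance. So the corollary, and your argument, can only be completed for even $n$; in that case your parity bookkeeping does close (a collision between $S_1$ and $S_2=N\setminus S_1$ would force the sign $(-\mathbf{1})^{|S_1|-|S_2|}$ to equal $-\mathbf{1}$ while $|S_1|-|S_2|\equiv n\equiv 0 \pmod 2$), and the rest of your proof --- the orbit identification and the appeal to Theorem \ref{theo_base_principal} --- is correct.

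For comparison, the paper's entire proof is the observation that $-\G_n(n-1)=\G_n(1)$, which silently treats multiplication by $-\mathbf{1}$ as if it transported the code property; it does not, because translation by $-\mathbf{1}$ is not an automorphism of $\Z_2^n$, and the sign it introduces is precisely the parity phenomenon you isolated. Your more explicit route is therefore not just a different proof but a diagnostic one: carried through honestly, it shows the statement must be restricted to even $n$ (or $\G_n(1)$ replaced by a set of size $n-1$ when $n$ is odd), a restriction the paper omits.
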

\begin{proof}
It is enough to take into account that $-\G_{n}(n-1)=\G_{n}(1)$.
\end{proof}

We prefer to use the $S_{n}$-code $\G_{n}(n-1)$ and not $\G_{n}(1)$ because in $\G_{n}(n-1)$
the positions of the negative components are easily obtained. Indeed, $+++-\cdot+-++=+-+-$ in 
$\G_{4}(3)$ but in $\G_{4}(1)$ we have $---+\cdot-+--=+-+-$.

Next we will see that a $G$-code is contained in any $S$-set of 
$\mathfrak{S}(\Z_{2}^{n},S_{n})$

\begin{proposition}
A $G$-code $\X$ is contained in $\G_{n}(a)$ for some $a$ ranging in $[1,n-1]$.
\end{proposition}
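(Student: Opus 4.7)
The plan is to exploit the single structural feature the hypotheses give us: $G$ is a \emph{permutation} subgroup of $\mathrm{Aut}(\Z_2^n)$. Such automorphisms act by permuting the coordinates of a $\Z_2$-sequence, and coordinate permutations preserve the Hamming weight $\omega$. So the very first step is to record that $\omega(gY)=\omega(Y)$ for every $g\in G$ and every $Y\in\Z_2^n$, which is immediate from the definition of $\omega$ as the number of $+$ entries.

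Next, I would apply the definition of a $G$-code given just before the statement: $\X = Y_G$ is the $G$-orbit of some $Y\in\Z_2^n$. Combining this with the weight-invariance observation, every element of $\X$ shares the common weight $\omega(Y)$. Setting $a := \omega(Y)$ therefore yields $\X \subseteq \G_n(a)$, which is the heart of the claim.

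The remaining task is to cut $a$ down to the range $[1,n-1]$. For the upper endpoint, $\G_n(n) = \{\mathbf{1}\}$ consists solely of the identity of $\Z_2^n$, which corresponds to the empty word; but the theory of codes recalled in Section~2 explicitly excludes the empty word from every code, so $a=n$ is impossible. For the lower endpoint, $\G_n(0)=\{-\mathbf{1}\}$ is a single point fixed by every permutation, so the only orbit it could support is the singleton $\{-\mathbf{1}\}$; this is the trivial $G$-code generating the order-two subgroup $\{\mathbf{1},-\mathbf{1}\}$ and is set aside as degenerate (consistent with the author's subsequent focus on $G$-codes as nontrivial orbits on $\X_n$).

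I do not anticipate a real obstacle: once one notices that $G$ acts on $\Z_2^n$ by coordinate permutations and that weight is the unique invariant of that action sufficient to separate the basic sets $\G_n(k)$ of $\mathfrak{S}(\Z_2^n,S_n)$, the proof reduces to a one-line orbit argument. The only delicate point is the boundary discussion for $a=0$, which is really a convention about which singleton orbits one chooses to call a $G$-code; the weight argument itself does all the work.
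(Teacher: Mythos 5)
Your argument is essentially identical to the paper's: the entire proof is the observation that a coordinate permutation preserves Hamming weight, so the orbit $Y_G$ lies in a single $\G_n(\omega(Y))$. Your additional care with the endpoints $a=0$ and $a=n$ goes beyond what the paper writes (it asserts the range $[1,n-1]$ without comment), and is a reasonable clarification rather than a different method.
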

\begin{proof}
Take $X$ in $\X$. It is easy to note that $\omega(gX)=\omega(X)$ for all $g\in G$. 
Hence $\X\subseteq\G_{n}(a)$ for some $a$ in $[1,n-1]$.
\end{proof}

On the other hand, it is follows directly from (\ref{estruc_cons}) that $\lambda_{i,j,2k+1}=0$ if 
$i+j$ is even and $\lambda_{i,j,2k}=0$ if $i+j$ is odd. The union of all basic sets $\G_{n}(2a)$ 
in $\mathfrak{S}(\Z_{2}^{n},S_{n})$ will be denoted by $\E_{n}$ and the union of all basic sets 
$\G_{n}(2a+1)$ in $\mathfrak{S}(\Z_{2}^{n},S_{n})$ will be denoted $\Odd_{n}$. The sets $\E_{2n}$ 
and $\Odd_{2n+1}$ are subgroups of order $2^{2n-1}$ and $2^{2n}$, respectively. Then 

$$\mathfrak{S}(\E_{2n},S_{n})=\{\G_{2n}(0),\G_{2n}(2),...,\G_{2n}(2n)\}$$ 
and 
$$\mathfrak{S}(\Odd_{2n+1},S_{n})=\{\G_{2n+1}(1),\G_{2n+1}(3),...,\G_{2n+1}(2n+1)\}$$
are $S$-subgroups of $\mathfrak{S}(\Z_{2}^{2n},S_{2n})$ and $\mathfrak{S}(\Z_{2}^{2n+1},S_{2n+1})$,
respectively.

From (\ref{producto}), $\G_{2n}(n)^{2}=\bigcup_{i=0}^{n}\G_{2n}(2i)=\E_{2n}$ and 
$\G_{2n+1}(n)^{2}=\bigcup_{i=0}^{n}\G_{2n+1}(2i+1)=\Odd_{2n+1}$. Therefore, neither $\G_{4n}(2n)$ 
nor $\G_{4n+3}(2n+1)$ contains some code $\X$ generating whole $\Z_{2}^{4n}$ and $\Z_{2}^{2n+1}$,
respectively. This remark is generalized below.

From [7] is obtained the following definition

\begin{definition}
Take $\G_{n}(a)$ in $\mathfrak{S}(\Z_{2}^{n},S_{n})$. Let $\mathfrak{S}^{\prime}\subset\mathfrak{S}(\Z_{2}^{n},S_{n})$ be a set of basic sets. We will call $\mathfrak{S}^{\prime}$ a 
$\G_{n}(a)$-\textbf{complete} $S$-set if it holds
\begin{enumerate}
\item $\G_{n}(i)\G_{n}(j)\supset\G_{n}(a)$ for all $\G_{n}(i),\G_{n}(j)\in\mathfrak{S}^{\prime}$,
\item There is no $\G_{n}(b)\in\mathfrak{S}(\Z_{2}^{n},S_{n})$ such that 
$\G_{n}(b)^{2}\supset\G_{n}(a)$ and $\G_{n}(b)\G_{n}(k)\supset\G_{n}(a)$ for all 
$\G_{n}(k)\in\mathfrak{S}^{\prime}$.
\end{enumerate}
\end{definition}

A important result obtained is that there is no $\G_{n}(a)$-complete for all $n$ and all $a$

\begin{theorem}\label{the_non_exis_S_comp}
\begin{enumerate}
\item There is no $\G_{2n}(2a+1)$-complete $S$-sets in $\mathfrak{S}(\Z_{2}^{2n},S_{n})$.
\item There is no $\G_{2n+1}(2a)$-complete $S$-sets in $\mathfrak{S}(\Z_{2}^{2n+1},S_{n})$.
\end{enumerate}
\end{theorem}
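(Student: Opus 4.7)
The plan is to reduce both parts to the parity observation already recorded in the paragraph preceding the definition of $\G_{n}(a)$-complete $S$-set: from (\ref{estruc_cons}), $\lambda_{i,j,k}=0$ whenever $i+j-k$ is odd. Translated through the identification $T_{i}=\G_{n}(n-i)$ used to relabel basic sets by Hamming weight, this says that $\G_{n}(a)\G_{n}(b)\supset\G_{n}(c)$ is possible only when $a+b+c\equiv n\pmod 2$. Specializing to $a=b$, every weight $c$ appearing in a square $\G_{n}(b)^{2}$ satisfies $c\equiv n\pmod 2$: in $\Z_{2}^{2n}$ such squares are supported on even weights, and in $\Z_{2}^{2n+1}$ they are supported on odd weights.

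Both parts then follow by a single contradiction. For part 1, suppose $\mathfrak{S}^{\prime}$ is a $\G_{2n}(2a+1)$-complete $S$-set; as such a family is by definition a non-empty collection of basic sets, I would pick any $\G_{2n}(b)\in\mathfrak{S}^{\prime}$ and apply condition 1 of the completeness definition with $i=j=b$, forcing $\G_{2n}(b)^{2}\supset\G_{2n}(2a+1)$. But the square is supported on even weights while $2a+1$ is odd, a contradiction. Part 2 is strictly parallel: a hypothetical element $\G_{2n+1}(b)$ of a $\G_{2n+1}(2a)$-complete set would force $\G_{2n+1}(b)^{2}\supset\G_{2n+1}(2a)$, but the square is supported on odd weights in $\Z_{2}^{2n+1}$ whereas $2a$ is even.

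The only delicate step, and the point I would argue with care, is the reading of condition 1: it has to be invoked with $i=j$, so that each individual element of $\mathfrak{S}^{\prime}$ contributes a square dominating $\G_{n}(a)$. Equivalently, one may take the viewpoint of condition 2 and observe that the parity obstruction leaves the entire candidate set $\{\G_{n}(b):\G_{n}(b)^{2}\supset\G_{n}(a)\}$ empty in both stated situations, so no non-trivial complete $S$-set can be assembled. After this reading is settled, everything collapses to the one-line parity check $2b+c\equiv n\pmod 2$ together with the mismatches $2a+1\not\equiv 0$ and $2a\not\equiv 1$ modulo $2$.
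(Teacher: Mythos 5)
Your proposal cannot be checked against an in-paper argument, because the paper states Theorem \ref{the_non_exis_S_comp} without proof: both the definition of a $\G_{n}(a)$-complete $S$-set and this non-existence result are imported from reference [7]. Judged on its own, your parity argument is correct and is the natural one. The identity $c=n-a-b+2i$ in (\ref{producto}) (equivalently, $\lambda_{i,j,k}=0$ for $i+j-k$ odd in (\ref{estruc_cons}), transported through $T_{i}=\G_{n}(n-i)$) does give $a+b+c\equiv n\pmod 2$, so every square $\G_{2n}(b)^{2}$ meets only even-weight basic sets and every square $\G_{2n+1}(b)^{2}$ only odd-weight ones. This is exactly the observation the paper records just before the definition ($\G_{2n}(n)^{2}=\E_{2n}$, $\G_{2n+1}(n)^{2}=\Odd_{2n+1}$, and the vanishing of $\lambda_{i,j,2k+1}$ for $i+j$ even) and reuses in the proof of the theorem that follows, so your route is consistent with how the author deploys completeness elsewhere.

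The two conventions you flag are genuinely load-bearing, and it is to your credit that you isolate them rather than glossing over them. If condition 1 of the definition were read as quantifying only over distinct pairs $i\neq j$, the theorem would be false: for example in $\Z_{2}^{4}$ one has $\G_{4}(1)\G_{4}(2)=\G_{4}(1)\cup\G_{4}(3)\supset\G_{4}(1)$, while no square $\G_{4}(d)^{2}$ contains $\G_{4}(1)$, so condition 2 holds vacuously and $\{\G_{4}(1),\G_{4}(2)\}$ would be $\G_{4}(1)$-complete. Likewise the empty family satisfies both conditions vacuously. So the proof really does require the diagonal instance $i=j$ of condition 1 together with the exclusion of the empty $\mathfrak{S}^{\prime}$; once those (standard) readings are fixed, your parity check $c\equiv n\pmod 2$ against the mismatched parities $2a+1$ and $2a$ settles both parts.
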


In the following theorem is shown the relationship between codes generating whole 
$\Z_{2}^{n}$ and non $\G_{n}(a)$-complete $S$-sets in $\mathfrak{S}(\Z_{2}^{n},S_{n})$

\begin{theorem}
There is no a code $\X$ generating whole $\Z_{2}^{n}$ in a $\G_{n}(a)$-complete $S$-set.
\end{theorem}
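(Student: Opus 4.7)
The strategy is to generalize the subgroup-containment observation made just before the definition of $\G_{n}(a)$-completeness: the identities $\G_{4n}(2n)^{2}=\E_{4n}$ and $\G_{4n+3}(2n+1)^{2}=\Odd_{4n+3}$ already trap any code contained in those basic sets inside a proper subgroup of $\Z_{2}^{n}$. I would extend this trapping to the full union of basic sets constituting an arbitrary $\G_{n}(a)$-complete $S$-set.

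First I would invoke Theorem \ref{the_non_exis_S_comp} to restrict attention to the case $n\equiv a\pmod 2$, since otherwise no $\G_{n}(a)$-complete $S$-set exists and the statement is vacuous. Next, a direct count of agreeing versus disagreeing positions (equivalently, reading the parities off (\ref{producto})) yields
\[
\omega(XY)\equiv n+\omega(X)+\omega(Y)\pmod 2,\qquad X,Y\in\Z_{2}^{n}.
\]
This identity shows that the set $H_{n}=\{X\in\Z_{2}^{n}:\omega(X)\equiv n\pmod 2\}$ is closed under products and contains the identity, so it is a subgroup of $\Z_{2}^{n}$ of index two; concretely $H_{n}=\E_{n}$ for $n$ even and $H_{n}=\Odd_{n}$ for $n$ odd.

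The crux is to show that every basic set $\G_{n}(i)\in\mathfrak{S}'$ lies in $H_{n}$. Condition 1 of the definition gives $\G_{n}(i)\G_{n}(j)\supset\G_{n}(a)$ for all pairs in $\mathfrak{S}'$, and the parity identity above turns this into $i+j\equiv n+a\equiv 0\pmod 2$; hence the indices appearing in $\mathfrak{S}'$ share a common parity. I would then combine the maximality condition 2 with the relation $\G_{n}(a)^{2}\supset\G_{n}(a)$ (which holds under the parity assumption of step one whenever $a$ lies in the relevant range around $n/2$) to conclude that $\G_{n}(a)$ itself must belong to $\mathfrak{S}'$; condition 1 applied with $k=a$ then pins the common parity to that of $a$, i.e.\ to $n\pmod 2$. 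Consequently $\bigcup_{\G_{n}(i)\in\mathfrak{S}'}\G_{n}(i)\subseteq H_{n}$, and any code $\X$ contained in this union satisfies $\X^{*}\subseteq H_{n}\subsetneq\Z_{2}^{n}$, contradicting $\X^{*}=\Z_{2}^{n}$.

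The delicate step I expect is the parity-pinning in the previous paragraph: ruling out the a priori possibility that $\mathfrak{S}'$ consists entirely of basic sets whose common weight parity is opposite to that of $a$. This is where the maximality condition 2 has to be squeezed carefully, using that $\G_{n}(a)$ itself satisfies the hypotheses imposed on $b$ in condition 2, so that a genuinely maximal $\mathfrak{S}'$ is forced to contain $\G_{n}(a)$; once that is in hand the containment $\mathfrak{S}'\subseteq H_{n}$ is automatic and the conclusion follows immediately.
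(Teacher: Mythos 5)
Your overall strategy coincides with the paper's: trap every basic set of a $\G_{n}(a)$-complete $S$-set inside the index-two subgroup $H_{n}$ ($=\E_{n}$ for $n$ even, $=\Odd_{n}$ for $n$ odd), so that any code $\X$ contained in the union satisfies $\X^{*}\subseteq H_{n}\subsetneq\Z_{2}^{n}$. Your parity identity $\omega(XY)\equiv n+\omega(X)+\omega(Y)\pmod 2$ is correct and makes explicit what the paper only reads off from (\ref{producto}), and your reduction via Theorem \ref{the_non_exis_S_comp} is fine. The difference is that the paper simply writes every member of a $\G_{2n}(2a)$-complete set as $\G_{2n}(2b)$, i.e.\ it assumes the parity-pinning silently, whereas you correctly isolate it as the step that needs an argument. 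Unfortunately the argument you propose for that step does not work.

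The gap is exactly where you predicted it. You want condition 2 to force $\G_{n}(a)\in\mathfrak{S}'$ on the grounds that $\G_{n}(a)$ satisfies the hypotheses imposed on $b$. But the hypothesis $\G_{n}(b)\G_{n}(k)\supset\G_{n}(a)$ forces $b+k+a\equiv n\pmod 2$ by your own parity identity; so if every weight $k$ occurring in $\mathfrak{S}'$ has parity opposite to $n$ (the case you must exclude, recalling $a\equiv n$), then $b=a$ fails this hypothesis and $\G_{n}(a)$ is simply not a candidate in condition 2 --- maximality yields nothing. Worse, the obstruction is realized: take $n=4$, $a=2$, $\mathfrak{S}'=\{\G_{4}(1),\G_{4}(3)\}$. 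By (\ref{producto}), $\G_{4}(1)^{2}=\G_{4}(3)^{2}=\G_{4}(2)\cup\G_{4}(4)$ and $\G_{4}(1)\G_{4}(3)=\G_{4}(0)\cup\G_{4}(2)$, so condition 1 holds; the parity constraint leaves no admissible $b\notin\{1,3\}$, so condition 2 holds under its only sensible reading (no such $\G_{n}(b)$ outside $\mathfrak{S}'$). Yet $\G_{4}(3)=\X_{4}$ is precisely the code of Theorem \ref{theo_base_principal} and generates all of $\Z_{2}^{4}$. So the parity-pinning is not merely unproven but false as the definition stands, and the theorem requires either a strengthened notion of completeness (one forcing $\G_{n}(a)\in\mathfrak{S}'$, after which your argument closes immediately) or an added parity hypothesis. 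Note that the paper's own proof carries the same unaddressed assumption.
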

\begin{proof}
Let $\mathfrak{S}^{\prime}$ denote a $\G_{2n}(2a)$-complete $S$-set. From (\ref{producto})
$$\G_{2n}(2b)^{2}=\bigcup_{i=0}^{2b}\G_{2n}(2n-4b+2i)$$
for all $\G_{2n}(2b)$ in $\mathfrak{S}^{\prime}$. Then all powers of $\G_{2n}(2b)$ will
contain basic sets $\G_{2n}(2k)$ only. Therefore the basic sets in a 
$\G_{2n}(2a)$-complete can generate the $S$-subgroup $\E_{2n}$ at the most. With a similar 
argument is shown for basic sets in $\G_{2n+1}(2a+1)$-complete $S$-sets.
\end{proof}

We finish this section showing some basic sets of $\mathfrak{S}(\Z_{2}^{n},S_{n})$ that can
to contain $G$-codes generating all $\Z_{2}^{n}$.

\begin{proposition}
$_{}$
\begin{enumerate}
\item $\G_{4n}(2n-1)^{3}=\Odd_{4n}$, $\G_{4n}(2n-1)^{4}=\E_{4n}$.
\item $\G_{4n+2}(2n-1)^{3}=\Odd_{4n+2}$, $\G_{4n}(2n-1)^{4}=\E_{4n+2}$.
\item $\G_{4n+1}(2n-2)^{3}=\E_{4n+1}$, $\G_{4n+1}(2n-2)^{4}=\Odd_{4n+1}$.
\item $\G_{4n+3}(2n)^{3}=\E_{4n+3}$, $\G_{4n+3}(2n)^{4}=\Odd_{4n+3}$.
\end{enumerate}
\end{proposition}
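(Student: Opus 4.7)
I plan to prove each identity in two steps: the inclusion $\G_{m}(a)^{k}\subseteq\E_{m}$ or $\Odd_{m}$ (where $(m,a)$ is one of the four pairs $(4n,2n-1)$, $(4n+2,2n-1)$, $(4n+1,2n-2)$, $(4n+3,2n)$ and $k\in\{3,4\}$) comes from a parity homomorphism, while the reverse inclusion is extracted from the product formula (\ref{producto}).

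For the parity step, consider the sign homomorphism $\phi:\Z_{2}^{m}\to\{\pm 1\}$ defined by $\phi(X)=\prod_{i=0}^{m-1}x_{i}=(-1)^{m-\omega(X)}$. Its kernel is $\E_{m}$ when $m$ is even and $\Odd_{m}$ when $m$ is odd, and in each case the complement is the other coset. Since $\phi$ is constant on $\G_{m}(a)$ with value $(-1)^{m-a}$, the power $\G_{m}(a)^{k}$ is contained in the coset $\phi^{-1}((-1)^{k(m-a)})$. Plugging $k=3,4$ and the four pairs $(m,a)$ yields exactly the coset on the right-hand side of each identity.

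For the reverse inclusion, I first compute $\G_{m}(a)^{2}$ by applying (\ref{producto}) with $a=b$, obtaining an explicit union $\bigcup_{i}\G_{m}(c_{i})$. Then $\G_{m}(a)^{3}=\bigcup_{i}\G_{m}(a)\cdot\G_{m}(c_{i})$, with each factor again evaluated by (\ref{producto}) (swapping $a$ and $c_{i}$ when $c_{i}<a$, and passing to the second branch via the symmetry $-\G_{m}(c)=\G_{m}(m-c)$ when $c_{i}>m-a$). In each part it will suffice to single out one or two indices $c_{i}$ near $m/2$ whose product with $\G_{m}(a)$ already sweeps through every odd (or every even) weight in $[0,m]$. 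As a sample, in part (1), $c_{i}=2n$ lies in $\G_{4n}(2n-1)^{2}$ and (\ref{producto}) gives $\G_{4n}(2n-1)\cdot\G_{4n}(2n)=\bigcup_{j=0}^{2n-1}\G_{4n}(1+2j)=\Odd_{4n}$, yielding the cube. The fourth-power identity then follows from $\G_{m}(a)^{4}=\G_{m}(a)\cdot\G_{m}(a)^{3}$ via a coset observation: for any $Z$ in the target coset (namely $\ker\phi$) and any fixed $X\in\G_{m}(a)$, the sign computation gives $\phi(XZ)=(-1)^{m-a}$, so that $XZ$ belongs to the coset of $\G_{m}(a)$, which by the cube identity coincides with $\G_{m}(a)^{3}$; hence $Z=X\cdot(XZ)\in\G_{m}(a)^{4}$.

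The main obstacle is purely arithmetic book-keeping: one must verify in each of the four parts that the chosen $c_{i}$ does appear in $\G_{m}(a)^{2}$ and that the resulting union of indices exhausts the whole target coset. This requires splitting (\ref{producto}) according to whether $a\le[m/2]$ or $a>[m/2]$, monitoring the range of the summation index $i$, and handling the large-$c_{i}$ terms via the second branch. Small values of $n$, where some $\G_{m}(c_{i})$ collapses to $\{\pm\mathbf{1}\}$ or a needed weight $c_{i}$ fails to appear in $\G_{m}(a)^{2}$, are likely to require separate direct verification.
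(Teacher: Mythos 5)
Your proposal is correct in method and follows essentially the same route as the paper, which only writes out part (1): compute $\G_{4n}(2n-1)^{2}=\G_{4n}(2)\cup\cdots\cup\G_{4n}(4n)$ from (\ref{producto}), single out $\G_{4n}(2n)$, and multiply once more to sweep out $\Odd_{4n}$ --- your sample computation is literally the paper's displayed one. Two of your refinements are genuine improvements. First, the paper gets the containment $\G_{m}(a)^{k}\subseteq\E_{m}$ or $\Odd_{m}$ from the parity remark on the structure constants $\lambda_{i,j,k}$ made earlier in the section; your sign homomorphism $\phi(X)=\prod_{i}x_{i}$ packages the same fact more transparently and uniformly across all four parts. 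Second, for the fourth power the paper argues ad hoc, exhibiting $\G_{4n}(0)\subseteq\G_{4n}(2n-1)\G_{4n}(2n+1)$ and reusing the square; your observation that $Z=X\cdot(XZ)$ with $XZ\in\phi^{-1}((-1)^{m-a})=\G_{m}(a)^{3}$ derives the fourth-power identity from the cube identity in one line, simultaneously for all four cases. Finally, your caution about small $n$ is not merely prudent but necessary: the statement itself fails in some degenerate cases, e.g.\ $\G_{6}(1)^{3}=\G_{6}(1)\cup\G_{6}(3)\neq\Odd_{6}$ (part 2 with $n=1$) and $\G_{9}(2)^{3}=\G_{9}(0)\cup\G_{9}(2)\cup\G_{9}(4)\cup\G_{9}(6)\neq\E_{9}$ (part 3 with $n=2$), because the intermediate weights your argument needs do not occur in the square there; the paper's proof silently assumes $n$ is large enough for them to exist.
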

\begin{proof}
By using (\ref{producto}) we have
\begin{eqnarray}\label{eqn_cuadrado}
\G_{4n}(2n-1)^{2}&=&\G_{4n}(2)\cup\cdots\cup\G_{4n}(4n)\\
&\supset&\G_{4n}(2n)\nonumber.
\end{eqnarray}
As 
\begin{equation}
\G_{4n}(2n-1)\G_{4n}(2n)=\G_{4n}(1)\cup\cdots\cup\G_{4n}(4n-1)
\end{equation}
then $\G_{4n}(2n-1)^{3}=\Odd_{4n}$.
From $\G_{4n}(2n-1)\G_{4n}(2n+1)\supset\G_{4n}(0)$ and from (\ref{eqn_cuadrado}) is followed
that $\G_{4n}(2n-1)^{4}=\E_{4n}$.
\end{proof}


As $S_{n}$ induces a $S$-partition on $\Z_{2}^{n}$ is straightforward to prove that $G$ induces a
$S$-partition on $\Z_{2}^{n}$ for all $G\leq S_{n}\leq Aut(\Z_{2}^{n})$. In the following
sections we will construct $S$-subgroups by using $G$-codes in $S$-ring $\mathfrak{S}(\Z_{2}^{n},G)$.

\section{Schur ring $\mathfrak{S}(\Z_{2}^{n},C_{n})$}

Let $C$ denote the cyclic permutation on the components $+$ and $-$ of $X$ in 
$\Z_{2}^{n}$ such that
\begin{equation}\label{cir1}
C(X)=C\left( x_{0},x_{1},...,x_{n-2},x_{n-1}\right) =\left(x_{1},x_{2},x_{3},...,x_{0}\right),
\end{equation}
that is, $C(x_{i})=x_{(i+1) mod n}$. The permutation $C$ is a generator of cyclic group 
$C_{n}=\left\langle C\right\rangle$ of order $n$. Let 
$X_{C}=Orb_{C_{n}}X=\{C^{i}(X):C^{i}\in C_{n} \}$. Therefore, $C_{n}$ defines a partition in
equivalent class on $\Z_{2}^{n}$ which is an $S$-partition and this we shall denote by 
$\Z_{2C}^{n}=\mathfrak{S}(\Z_{2}^{n},C_{n})$. It is worth mentioning that this Schur ring 
corresponds to the orbit Schur ring induced by the cyclic permutation automorphic subgroup 
$C_{n}\le S_n\le Aut(\Z_2^n)$. 

On the other hand, let $X=\{x_{i}\}$ and $Y=\{y_{i}\}$ be two complex-valued sequences of period 
$n$. The periodic correlation of $X$ and $Y$ at shift $k$ is the product defined by:

\begin{equation}
\mathsf{P}_{X,Y}(k)=\sum\limits_{i=0}^{n-1}x_{i}\overline{y}_{i+k},\ k=0,1,...,n-1,
\end{equation}

where $\overline{a}$ denotes the complex conjugation of $a$ and $i+k$ is calculated modulo $n$.
If $Y=X$, the correlation $\mathsf{P}_{X,Y}(k)$ is denoted by $\mathsf{P}_{X}(k)$ and is the
autocorrelation of $X$. Obviously, 

\begin{eqnarray}
\mathsf{P}_{X}(k)&=&\overline{\mathsf{P}_{X}(n-k)},\label{symmetric_autoc}\\
\mathsf{P}_{-X}(k)&=&\mathsf{P}_{X}(k),\label{negative_autoc}\\
\mathsf{P}_{C^{i}X}(k)&=&\mathsf{P}_{X}(k)\label{cyclic_autoc},
\end{eqnarray}
for all $0\leq i\leq n-1$ and for all $X$ in $\Z_{2}^{n}$.\\

If $X$ is a $\Z_{2}$-sequence of length $n$, $\mathsf{P}_{X}(k)= 2\omega \left\{Y_{k}\right\}-n$,
where $ Y_{k}=XC^{k}X $. Also by (\ref{producto}), if $X\in \G_{n}(a)$, then

\begin{equation}
\mathsf{P}_{X}(k)=n-4a+4i_{k},
\end{equation} 

for some $0\leq i_{k}\leq a$ and $n-\mathsf{P}_{X}(k)$ is divisible by 4 for all $k$.

We know from theorem \ref{theo_base_principal} that $\G_{n}(n-1)$ is a code for 
$\mathfrak{S}(\Z_{2}^{n},S_{n})$ for all $n$. As $\G_{n}(n-1)=\{X,CX,C^{2}X,...,C^{n-1}\}=\X_{C}$,
then $\X_{C}$ is a $C_{n}$-code for $\mathfrak{S}(\Z_{2}^{n},C_{n})$. Then for to obtain 
information from each basic set $Y_{C}$ in $\mathfrak{S}(\Z_{2}^{n},C_{n})$ we must do it through 
of its $\X_{C}$-factorization with the $C_{n}$-code $\X_{C}$. The advantage of using this code lies 
in its simplicity, since each $C^{i}X$ has exactly a $-$ as its component and thereby it is 
possible to know exactly the Hamming weight of each word writing with this basis.

All word $Y$ in $\G_{n}(a)$ has the form $C^{i_{1}}XC^{i_{2}}X\cdots C^{i_{r}}X$ with length
$\vert Y\vert=r$ and with $a=n-r$. Then, every basic set in $\G_{nC}(a)$ has form
\begin{equation}
Y_{C}=\bigcup_{k=0}^{n-1}C^{i_{1}+k}XC^{i_{2}+k}X\cdots C^{i_{r}+k}X,
\end{equation}

and in this way if $Z=C^{j_{1}}XC^{j_{2}}X\cdots C^{j_{s}}X$, we have
\begin{eqnarray*}
Y_{C}Z_{C}&=&\bigcup_{k=0}^{n-1}(YC^{k}Z)_{C}\\
&=&\bigcup_{k=0}^{n-1}(C^{i_{1}}X\cdots C^{i_{r}}XC^{j_{1}+k}X\cdots C^{j_{s}+k}X)_{C}.
\end{eqnarray*}

Each word $YC^{k}Z=C^{i_{1}}X\cdots C^{i_{r}}XC^{j_{1}+k}X\cdots C^{j_{s}+k}X$ can be reduced if
exist two equal letters. Thereupon $YC^{k}Z$ decreases its length an even number. Therefore $YC^{k}Z$ 
belong to $\G_{n}(b)$ with $b=n-(r+s)+2w$ where $2w$ is the number of canceled letters. If both
$Y$ and $Z$ belongs to $\G_{n}(a)$, then $b=n-2r+2w$ and $\mathsf{P}_{Y,Z}(k)=n-4r+4w_{k}$. 

Next we will obtain the algebraic version of (\ref{symmetric_autoc}), (\ref{negative_autoc}) and
(\ref{cyclic_autoc})

\begin{proposition}
Let $Y$ denote the binary sequence $C^{i_{1}}XC^{i_{2}}X\cdots C^{i_{r}}X$. If $YC^{k}Y\in\G_{n}(a)$,
then
\begin{enumerate}
\item $YC^{n-k}Y$ and $(C^{j}X)C^{k}(C^{j}X)$ are in $\G_{n}(a)$ too. 
\item $(-Y)C^{k}(-Y)\in\G_{n}(a)$.
\end{enumerate}
\end{proposition}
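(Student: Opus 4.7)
The plan is to establish each of the three assertions directly, mirroring the autocorrelation identities (\ref{symmetric_autoc}), (\ref{cyclic_autoc}), (\ref{negative_autoc}). The ambient tools are that $C$ is a coordinate-permutation automorphism of $\Z_{2}^{n}$ (so it preserves Hamming weight and distributes over the componentwise product), that $\Z_{2}^{n}$ is abelian with $C^{n}=e$, and that $(-)(-)=+$ componentwise.

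For $YC^{n-k}Y\in\G_{n}(a)$, I would apply $C^{k}$ to $Y\cdot C^{n-k}Y$; using $C^{k}C^{n-k}=C^{n}=e$ together with commutativity of $\Z_{2}^{n}$ gives $C^{k}(Y\cdot C^{n-k}Y)=(C^{k}Y)\cdot Y=Y\cdot C^{k}Y$, and weight-preservation of $C^{k}$ yields $\omega(YC^{n-k}Y)=\omega(YC^{k}Y)=a$. For $(-Y)C^{k}(-Y)\in\G_{n}(a)$, global negation commutes with $C^{k}$, so $(-Y)\cdot C^{k}(-Y)=(-Y)\cdot(-C^{k}Y)=Y\cdot C^{k}Y$ componentwise, whence the weight is again $a$.

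For the remaining sub-claim $(C^{j}X)C^{k}(C^{j}X)\in\G_{n}(a)$, I would factor out $C^{j}$: since $C^{j}$ is an automorphism, $(C^{j}X)\cdot C^{k}(C^{j}X)=(C^{j}X)(C^{k+j}X)=C^{j}(X\cdot C^{k}X)$, so the weight of $(C^{j}X)C^{k}(C^{j}X)$ equals $\omega(X\cdot C^{k}X)$, independently of $j$; this is the algebraic content of (\ref{cyclic_autoc}). The main obstacle is reconciling this with the hypothesis on $Y$: expanding via commutativity of $\Z_{2}^{n}$ one obtains $YC^{k}Y=\prod_{s=1}^{r}C^{i_{s}}(X\cdot C^{k}X)$, which is a product of cyclic shifts of $X\cdot C^{k}X$, and for $r\geq 2$ its weight generally differs from $\omega(X\cdot C^{k}X)$. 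I therefore have to read the $a$ in this sub-clause as the common value $\omega((C^{j}X)C^{k}(C^{j}X))$ across all $j$ rather than the $a$ of the hypothesis, consistent with the algebraic content of (\ref{cyclic_autoc}) that the proposition packages together with the other two identities. With that reading fixed, the one-line factorization above supplies the proof, and I would make the reading explicit before writing out the calculation.
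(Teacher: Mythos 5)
Your argument is correct, and for the first item it takes a genuinely different route from the paper. The paper works in the word picture: it writes $YC^{k}Y=C^{i_{1}}X\cdots C^{i_{r}}XC^{i_{1}+k}X\cdots C^{i_{r}+k}X$, notes that the weight is $n-2r+2w$ where $2w$ counts the cancelled letters, and shows the cancelling pairs of $YC^{k}Y$ and $YC^{n-k}Y$ are in bijection, since $i_{j}\equiv i_{l}+k$ iff $i_{l}\equiv i_{j}+(n-k)\pmod n$. You instead stay inside the group $\Z_{2}^{n}$ and apply the weight-preserving automorphism $C^{k}$ to $Y\cdot C^{n-k}Y$, turning it into $Y\cdot C^{k}Y$; this is shorter, avoids the cancellation bookkeeping, and does not even require $Y$ to be presented as a reduced word on $\X_{C}$. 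Part 2 is handled identically in both proofs.

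On the third sub-claim: you are right that, read literally, $(C^{j}X)C^{k}(C^{j}X)\in\G_{n}(a)$ with $a$ taken from the hypothesis on $Y$ fails for $r\geq2$, and your computation $(C^{j}X)\cdot C^{k}(C^{j}X)=C^{j}(X\cdot C^{k}X)$ is the right one. But since the proposition is announced as the algebraic version of (\ref{symmetric_autoc}), (\ref{negative_autoc}) and (\ref{cyclic_autoc}), and (\ref{cyclic_autoc}) asserts $\mathsf{P}_{C^{i}X}(k)=\mathsf{P}_{X}(k)$ for an \emph{arbitrary} sequence, the intended claim is almost certainly $(C^{j}Y)C^{k}(C^{j}Y)\in\G_{n}(a)$, i.e.\ the $X$ in that clause should read $Y$. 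Under that reading your one-line factorization $(C^{j}Y)\cdot C^{k}(C^{j}Y)=C^{j}(Y\cdot C^{k}Y)$ yields the claim with the hypothesis's $a$ directly, with no reinterpretation of $a$ needed; the paper disposes of this case by asserting the cancellation count is ``equally proved.'' So the only adjustment I would make is to resolve the ambiguity by replacing $X$ with $Y$ rather than by redefining $a$.
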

\begin{proof}
1. It is clear that 
\begin{equation*}
YC^{k}Y=C^{i_{1}}XC^{i_{2}}X\cdots C^{i_{r}}XC^{i_{1}+k}XC^{i_{2}+k}X\cdots C^{i_{r}+k}X\in\G_{n}(a)
\end{equation*}
with $a=n-2r+2w$, where $2w$ are the number of canceled letters. We wish to show that the
cancellation numbers of $YC^{k}Y$ and $YC^{n-k}Y$ coincide. Suppose that $i_{j}=i_{1}+k$ for some
$j$ and some $k$. Then this implies that $n-k+i_{j}=i_{1}$ reduced module $n$. Therefore
\begin{equation*}
YC^{n-k}Y=C^{i_{1}}X\cdots C^{i_{r}}XC^{n-k+i_{1}}X\cdots C^{n-k+i_{r}}X
\end{equation*}
has the same number of cancellations as $YC^{k}Y$. Equally is proved for $(C^{j}X)C^{k}(C^{j}X)$.
2. As $C^{k}(-Y)=-C^{k}Y$, then $(-Y)C^{k}(-Y)=YC^{k}Y\in\G_{n}(a)$.
\end{proof}

Now we will show other advantage of to use the $C_{n}$-code $\X_{C}$

\begin{proposition}
For all $n\leq2$ we have
\begin{equation}
\overline{\G_{n}(n-2)}^{2}=n+2\overline{\G_{n}(n-2)}+(n-3)\overline{\G_{n}(n-4)}.
\end{equation}
\end{proposition}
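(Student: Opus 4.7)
The strategy is to exploit the simplicity of the $C_n$-code $\X_C=\{X_0,X_1,\ldots,X_{n-1}\}$ introduced in this section. Each weight-$(n-2)$ sequence factors uniquely as a product $X_iX_j$ of two distinct letters with $0\le i<j\le n-1$, so
\[
\overline{\G_n(n-2)}\;=\;\sum_{0\le i<j\le n-1} X_iX_j,
\]
and hence
\[
\overline{\G_n(n-2)}^{\,2}\;=\;\sum_{\substack{0\le i<j\le n-1\\ 0\le k<l\le n-1}} X_iX_jX_kX_l.
\]
Expanding this double sum combinatorially, rather than appealing to the structure-constant formula (\ref{estruc_cons}), is precisely the announced advantage of working with $\X_C$.

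The main step is a case analysis on the overlap $\{i,j\}\cap\{k,l\}$, using the relation $X_a^{\,2}=\textbf{1}$ in $\Z_2^n$ to cancel repeated letters. When $\{i,j\}=\{k,l\}$ the four letters cancel completely and the summand equals $\textbf{1}$; when the overlap has size one, two letters cancel and the remaining product is a two-letter word $X_aX_b$ lying in $\G_n(n-2)$; when the overlap is empty, all four letters are distinct and the product lies in $\G_n(n-4)$. Because $S_n$ acts transitively on each $\G_n(a)$, every element of $\G_n(n-2)$, respectively of $\G_n(n-4)$, arises with the same multiplicity, so it suffices to count the number of ordered pairs of $2$-subsets of $\{0,\ldots,n-1\}$ reducing to a fixed representative in each case. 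Collecting the three types of contributions will produce a linear combination of $\textbf{1}$, $\overline{\G_n(n-2)}$ and $\overline{\G_n(n-4)}$ of the stated form; the boundary case $n=3$ should be checked directly, since there $\G_n(n-4)$ is vacuous and the coefficient $(n-3)$ correspondingly vanishes.

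The main technical obstacle is the bookkeeping in the single-overlap case, where both $\{i,j\}$ and $\{k,l\}$ range over unordered $2$-subsets and it is easy to double count depending on which index of the first pair matches which index of the second. A clean way to avoid this is to sum freely over $4$-tuples $(i,j,k,l)$ with $i\neq j$ and $k\neq l$ and divide by $4$ at the end, turning each multiplicity into a transparent product of binomial factors that can be matched against the claimed coefficients $n$, $2$, and $n-3$, with (\ref{estruc_cons}) providing an independent cross-check.
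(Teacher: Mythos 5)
Your reading of the statement --- expanding $\overline{\G_n(n-2)}^2$ as a double sum over all ordered pairs of $2$-subsets of $\{0,\dots,n-1\}$ --- is internally consistent, but if you actually carry out the count you set up, you do not get the claimed coefficients. In the full-overlap case there are $\binom{n}{2}$ ordered pairs with $\{i,j\}=\{k,l\}$, so the coefficient of $\textbf{1}$ is $\binom{n}{2}$, not $n$; in the single-overlap case a fixed $X_aX_b$ arises from exactly $2(n-2)$ ordered pairs (choose the common index $c\notin\{a,b\}$ and which of the two pairs contains $a$), so the coefficient of $\overline{\G_n(n-2)}$ is $2(n-2)$, not $2$; and a fixed element of $\G_n(n-4)$ arises from the $6$ ordered pairs of disjoint $2$-subsets partitioning its support, so that coefficient is $6$, not $n-3$. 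These values agree with (\ref{estruc_cons}) (namely $\lambda_{2,2,0}=\binom{n}{2}$, $\lambda_{2,2,2}=2(n-2)$, $\lambda_{2,2,4}=6$) and with the cardinality check $\binom{n}{2}^2=\binom{n}{2}+2(n-2)\binom{n}{2}+6\binom{n}{4}$, whereas the displayed identity already fails that check at $n=4$ ($36\neq 4+12+1$). So the sentence ``collecting the three types of contributions will produce a linear combination \dots of the stated form'' is precisely where your argument breaks: the method is sound, but under your interpretation it refutes the identity rather than proving it, and the cross-check against (\ref{estruc_cons}) that you defer to the end is exactly the step that would have exposed this.

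The paper's proof computes a different object. It fixes a single word $Y=C^iXC^jX$ and squares the one basic set $Y_C$ of $\mathfrak{S}(\Z_2^n,C_n)$, i.e.\ it enumerates the $n$ products $YC^kY=C^iXC^jXC^{i+k}XC^{j+k}X$ for $k=0,\dots,n-1$: one value of $k$ yields the trivial word, the two values $k=j-i$ and $k=n-(j-i)$ yield words of weight $n-2$, and the remaining $n-3$ values yield reduced words of weight $n-4$. That partition $n=1+2+(n-3)$ of the set of shifts is the source of the stated coefficients, so the proposition must be read as a claim about $\overline{Y_C}^{\,2}$ for a basic set $Y_C\subseteq\G_n(n-2)$ of the cyclic Schur ring (and ``$n\leq2$'' as a typo for a lower bound on $n$), not about the simple quantity $\overline{\G_n(n-2)}$ in $\mathfrak{S}(\Z_2^n,S_n)$. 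To salvage your write-up, redo the count over the $n$ shifts $k$ applied to a single orbit rather than over pairs of codewords; note in passing that even the paper's count tacitly assumes the relevant orbits are free, which fails when $j-i=n/2$ (e.g.\ $n=4$, $Y=X_0X_2$), where the two ``weight $n-2$'' shifts collapse into a second copy of the trivial word.
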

\begin{proof}
All word $Y$ in $\G_{n}(n-2)$ has the form $C^{i}XC^{j}X$ with $i<j$. Then 
$YC^{k}Y=C^{i}XC^{j}XC^{i+k}XC^{j+k}X$ and there exist a $k$ such that either $i+k=j$ or $j+k=i$
and for all the remaining values of $k$ we have that $C^{i}XC^{j}XC^{i+k}XC^{j+k}X$ is a reduced
word. As $YC^{k}Y$ and $YC^{n-k}Y$ are in $\G_{n}(a)$ for some $a$, then $Y_{C}^{2}$ contains 
$2$ words in $\G_{n}(n-2)$, $n-3$ words in $\G_{n}(n-4)$ and the trivial word in $\G_{n}(n)$.
\end{proof}

On the other hand, let 
\begin{equation}
F_{d}(\Z_{2}^{n})=\bigcup_{\vert X\vert=d}X.
\end{equation}
Clearly $d$ divides to $n$ and the $X\in F_{d}(\Z_{2}^{n})$ have the form $X=(Y,Y,...,Y)$, with 
$Y\in\Z_{2}^{d}$. Then $F_{d}(\Z_{2C}^{n})=\bigcup_{\vert X_{C}\vert=d}X_{C}$ is an $S$-set of
$\Z_{2C}^{n}$, for each $d\vert n$. When $d=n$, we will to say that $C_{n}$ acts freely on 
$X_{C}$ and we denote $F_{n}(\Z_{2C}^{n})$ as $F(\Z_{2C}^{n})$. When $d<n$, we will to say that
$C_{n}$ don't act freely on $X_{C}$ and let $\widehat{F}(\Z_{2C}^{n})$ denote the set of the 
$X_{C}$ which are not frees under the action of $C_{n}$, namely

\begin{equation}\label{Z_not_libre}
\widehat{F}(\Z_{2C}^{n})=\bigcup_{d\mid n,d<n}F_{d}(\Z_{2C}^{n}).
\end{equation}

Therefore, 
\begin{eqnarray}\label{Z_libre}
\Z_{2C}^{n}&=&F(\Z_{2C}^{n})\cup \widehat{F}(\Z_{2C}^{n})\nonumber\\
&=&\bigcup_{d\mid n}F_{d}(\Z_{2C}^{n}). 
\end{eqnarray}

The set $F_{d}(\Z_{2}^{n})$ is constructed with codewords in
\begin{equation}
\X_{F,d}=\{A_{0,d}X,A_{1,d}X,\cdots,A_{d-1,d}X\}
\end{equation}
where $A_{i,d}X=C^{i}XC^{i+d}X\cdots C^{i+kd}X$ for $i=0,1,2,...,d-1$ with $k=\frac{n}{d}-1$, 
$X\in\G_{n}(n-1)$ and 

$$\{i,i+d,i+2d,\cdots,i+kd\}$$

is an arithmetic progression. Then $\X_{F,d}$ is a $P(T)$-code with 
$$P(T)=\{\{i,i+d,i+2d,...,i+kd\}:\ i=0,1,...,d-1\}$$
and any word in $\X_{F,d}^{*}$ has the form
\begin{equation}
Y=A_{0,d}^{\epsilon_{0}}XA_{1,d}^{\epsilon_{1}}X\cdots A_{d-1,d}^{\epsilon_{d-1}}X
\end{equation}
$\epsilon_{i}=0,1$. It is clear that $C^{i}A_{j,d}X=A_{i+j,d}X$. Hence $\X_{F,d}$ is a
$C_{n}$-code.

If $d=1$, then $k=n-1$, $i=0$ and 
\begin{equation}
\X_{F,1}=\{A_{0,1}X\}=\{XCXC^{2}X\cdots C^{n-1}X\}
\end{equation}
is a code with a codeword. As $XCXC^{2}X\cdots C^{n-1}X=-\textbf{1}$, then 
$\X_{F,1}^{*}=\{\textbf{1},-\textbf{1}\}$ for all $n$.

If $d=n$, then $k=0$, $i=0,1,\cdots,n-1$ and 
\begin{equation}
\X_{F,n}=\{A_{0,n}X,A_{1,n}X,\cdots,A_{n-1,n}X\}=\{X,CX,C^{2}X,\cdots,C^{n-1}X\}
\end{equation}
is an code with exactly $n$ codewords and $\X_{F,d}=\X_{C}$. Let $\vert\X_{F,d}\vert$ be the rank
of $\X_{F,d}$. We then note that $1<\vert\X_{F,d}\vert<n$ for $1<d<n$.

Now we will see the relationship between free subgroup $\X_{F,d}^{*}$ and the sets 
$F_{d}(\Z_{2}^{n})$

\begin{theorem}
$\X_{F,d}^{*}$ is a subgroup of $\Z_{2}^{n}$ of order $2^{d}$ with 
$\X_{F,d}^{*}=\bigcup_{r\vert d}F_{d}(\Z_{2}^{n})$. We will donote this subgroup with 
$\mathbb{G}_{d}(n)$.
\end{theorem}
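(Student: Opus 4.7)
The plan is to combine the $P(T)$-code cardinality with a direct description of $\X_{F,d}^{*}$ as the subgroup of $d$-periodic sequences, and then conclude by comparing cardinalities.

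First I would observe that $\Z_{2}^{n}$ has exponent $2$, so every submonoid is closed under inversion and is automatically a subgroup; hence $\X_{F,d}^{*}\leq\Z_{2}^{n}$. Next I would apply Theorem \ref{theo_free_group_code}: the partition $P(T)=\{\{i,i+d,\ldots,i+kd\}:i=0,1,\ldots,d-1\}$ of $N$ with $k=n/d-1$ is exactly the one defining $\X_{F,d}$ as a $P(T)$-code, and since $\vert\X_{F,d}\vert=d$, we obtain $\vert\X_{F,d}^{*}\vert=2^{d}$.

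The main step is to identify $\X_{F,d}^{*}$ with $\{Z\in\Z_{2}^{n}:C^{d}Z=Z\}$. Each codeword $A_{i,d}X$ is the binary sequence whose $-$ entries occur precisely at the arithmetic progression $\{i+jd\pmod n:j=0,\ldots,n/d-1\}$, and this set of positions is invariant under shift by $d$; hence $C^{d}A_{i,d}X=A_{i,d}X$. Since the subset $H_{d}=\{Z\in\Z_{2}^{n}:C^{d}Z=Z\}$ is a subgroup of $\Z_{2}^{n}$ (products of $d$-periodic sequences are $d$-periodic), I get $\X_{F,d}^{*}\subseteq H_{d}$. On the other hand, $H_{d}$ is in bijection with $\Z_{2}^{d}$ via the first $d$ coordinates, so $\vert H_{d}\vert=2^{d}=\vert\X_{F,d}^{*}\vert$, and the inclusion becomes equality.

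Finally, $H_{d}$ consists exactly of those $Z\in\Z_{2}^{n}$ whose minimal period divides $d$, which by the definition preceding the theorem is $\bigcup_{r\mid d}F_{r}(\Z_{2}^{n})$; this yields the stated description.

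The potentially delicate point is bookkeeping in the second step: verifying that the positions of the $-$'s in $A_{i,d}X$ are literally the arithmetic progression (no coincidences between factors) and that the blocks for different $i$ are disjoint and exhaust $N$, so that $\X_{F,d}$ genuinely falls under the $P(T)$-code framework. Once that combinatorial description is in place, everything else is a cardinality comparison inside the elementary abelian $2$-group $\Z_{2}^{n}$.
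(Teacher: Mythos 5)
Your proof is correct, and the structural half of it takes a genuinely different route from the paper's. Both arguments obtain the subgroup property and the order $2^{d}$ the same way, by observing that $\X_{F,d}$ is a $P(T)$-code for the partition of $N$ into residue classes modulo $d$ and invoking Theorem \ref{theo_free_group_code}. For the identification of $\X_{F,d}^{*}$ with $\bigcup_{r\mid d}F_{r}(\Z_{2}^{n})$ (note the statement's ``$F_{d}$'' under the union is a typo for $F_{r}$, as both you and the paper's proof implicitly assume), the paper works combinatorially inside the group: it splits into the cases $d$ prime and $d$ composite, and exhibits explicit products $A_{i,d}XA_{i+r,d}X\cdots A_{i+(\frac{d}{r}-1)r,d}X$ lying in $F_{r}(\Z_{2}^{n})$ for each $r\mid d$. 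As written that argument really only establishes one inclusion and leaves the exhaustion of $F_{r}(\Z_{2}^{n})$ and the classification of all $2^{d}$ words implicit. Your approach replaces this bookkeeping with a single sandwich: every generator is fixed by $C^{d}$, the fixed-point set $H_{d}=\{Z:C^{d}Z=Z\}$ is a subgroup of order exactly $2^{d}$ (determined by the first $d$ coordinates), so $\X_{F,d}^{*}=H_{d}$ by cardinality, and $H_{d}$ decomposes by minimal period into $\bigcup_{r\mid d}F_{r}(\Z_{2}^{n})$. This buys you both inclusions at once, avoids the prime/composite case split, and makes the later corollary ($C_{n}$-invariance of $\mathbb{G}_{d}(n)$) transparent, since $H_{d}$ is manifestly $C_{n}$-stable; the paper's version, in exchange, tells you explicitly which words of $\X_{F,d}^{*}$ land in which $F_{r}$. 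The one point you flagged as delicate --- that the supports of the $A_{i,d}X$ are the full residue classes modulo $d$, pairwise disjoint and covering $N$ --- does hold, so the $P(T)$-code framework applies as you use it.
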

\begin{proof}
As $\X_{F,d}$ is a $P(T)$-code, then  $\mathbb{G}_{d}(n)$ is a subgroup of $\Z_{2}^{n}$ of order
$2^{d}$ for all divisor $d$ of $n$. Then we only will show that $\mathbb{G}_{d}(n)$ has the 
desired structure. If $d$ is a prime divisor of $n$, then all words of $\mathbb{G}_{d}(n)$ are 
in $F_{d}(\Z_{2}^{n})$, except for $\textbf{1}$ and $-\textbf{1}$. Hence
\begin{equation}
\mathbb{G}_{d}(n)=F_{1}(\Z_{2}^{n})\cup F_{d}(\Z_{2}^{n})
\end{equation}
with
\begin{eqnarray*}
-\textbf{1}&=&\prod_{i=0}^{d-1}A_{i,d}X\\
\textbf{1}&=&(A_{i,d}X)^{2}\ for\ all\ i.
\end{eqnarray*} 
Suppose that $d$ is no prime. Then
\begin{equation}
A_{i,d}XA_{i+r,d}X\cdots A_{i+(\frac{d}{r}-1)r,d}X
\end{equation}
is contained in $F_{r}(\Z_{2}^{n})$, $r\vert d$ and $i=0,1,...,r-1$. Therefore 
$\mathbb{G}_{d}(n)=\bigcup_{r\vert d}F_{r}(\Z_{2}^{n})$.
\end{proof}

\begin{corollary}
$\mathbb{G}_{dC}(n)$ is an $S$-subgroup of $\mathfrak{S}(\Z_{2}^{n},C_{n})$.
\end{corollary}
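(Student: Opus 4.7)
The plan is to invoke Theorem \ref{theo_basic_S_subgroup} with $G=C_{n}$ and $\mathcal{X}=\X_{F,d}$. Recall that Theorem \ref{theo_basic_S_subgroup} asserts: whenever a permutation automorphic subgroup $G\leq Aut(\Z_{2}^{n})$ acts on a set $\mathcal{X}\subseteq\Z_{2}^{n}$, the monoid $\mathcal{X}^{*}$ is an $S$-subgroup of $\mathfrak{S}(\Z_{2}^{n},G)$. So all I need to do is verify that $C_{n}$ permutes the codewords of $\X_{F,d}$, and then translate the conclusion into the notation $\mathbb{G}_{dC}(n)$.

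First I would check closure of $\X_{F,d}$ under $C_{n}$. By definition $A_{j,d}X=C^{j}XC^{j+d}X\cdots C^{j+kd}X$ with $k=\tfrac{n}{d}-1$, so
\begin{equation*}
C^{i}A_{j,d}X=C^{i+j}XC^{i+j+d}X\cdots C^{i+j+kd}X = A_{(i+j)\bmod d,\,d}X,
\end{equation*}
the last equality holding because the progression $\{i+j,i+j+d,\dots,i+j+kd\}$ modulo $n$ depends only on the residue of $i+j$ modulo $d$. This identity (already noted in the excerpt) says exactly that $C_{n}$ acts on the set $\X_{F,d}=\{A_{0,d}X,\dots,A_{d-1,d}X\}$.

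Second I would apply Theorem \ref{theo_basic_S_subgroup}, which immediately yields that $\X_{F,d}^{*}$ is an $S$-subgroup of $\mathfrak{S}(\Z_{2}^{n},C_{n})$. By the preceding theorem $\X_{F,d}^{*}=\mathbb{G}_{d}(n)$, and $\mathbb{G}_{dC}(n)$ is by definition the set of $C_{n}$-orbits making up $\mathbb{G}_{d}(n)$, that is, the simple quantity representing $\mathbb{G}_{d}(n)$ inside $\mathfrak{S}(\Z_{2}^{n},C_{n})$. Hence $\mathbb{G}_{dC}(n)$ is an $S$-subgroup of $\mathfrak{S}(\Z_{2}^{n},C_{n})$.

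There is essentially no obstacle; the only point that might need care is the modular bookkeeping in the identity $C^{i}A_{j,d}X=A_{(i+j)\bmod d,d}X$, but this is already observed in the paper. The corollary is therefore a direct application of Theorem \ref{theo_basic_S_subgroup} together with the structural description of $\mathbb{G}_{d}(n)$ given in the previous theorem.
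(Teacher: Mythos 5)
Your proof is correct, but it takes a different route from the paper's. The paper's own argument is structural: having just established that $\mathbb{G}_{d}(n)=\bigcup_{r\mid d}F_{r}(\Z_{2}^{n})$, it observes that $C_{n}$ defines a partition on each piece $F_{r}(\Z_{2}^{n})$, so the union is a union of $C_{n}$-orbits and hence an $S$-set (and a subgroup, hence an $S$-subgroup). You instead work at the level of the generating code: you verify the identity $C^{i}A_{j,d}X=A_{(i+j)\bmod d,\,d}X$, conclude that $C_{n}$ acts on $\X_{F,d}$, and then invoke Theorem \ref{theo_basic_S_subgroup} to get that $\X_{F,d}^{*}=\mathbb{G}_{d}(n)$ is an $S$-subgroup of $\mathfrak{S}(\Z_{2}^{n},C_{n})$. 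Both arguments are sound and short; yours has the advantage of fitting the paper's general machinery (a $G$-invariant code generates an $S$-subgroup) and of not depending on the decomposition into the sets $F_{r}$, while the paper's version makes the orbit structure of $\mathbb{G}_{d}(n)$ inside $\Z_{2C}^{n}$ more explicit. Note also that the key identity you verify is already recorded in the paper just before the theorem ("Hence $\X_{F,d}$ is a $C_{n}$-code"), so your route is fully supported by material the paper has in hand.
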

\begin{proof}
The group $\left\langle C_{n}\right\rangle$ defines a partition on each $F_{r}(\Z_{2}^{n})$ in
$\mathbb{G}_{d}(n)$ and hence we obtain the desired statement.
\end{proof}

\begin{example}
The subgroup $\mathbb{G}_{3}(9)$ of $\Z_{2}^{9}$ is given by 
\begin{eqnarray*}
\mathbb{G}_{3}(9)&=&F_{1}(\Z_{2}^{9})\cup F_{3}(\Z_{2}^{9})\\
&=&\{-\textbf{1},\textbf{1}\}\cup\{XC^{3}XC^{6}X,\ CXC^{4}XC^{7}X,\ C^{2}XC^{5}XC^{8}X,\\
&&XCXC^{3}XC^{4}XC^{6}XC^{7}X,\ XC^{2}XC^{3}XC^{5}XC^{6}XC^{8}X,\\
&&CXC^{2}XC^{4}XC^{5}XC^{7}XC^{8}X\}\\
&=&\{-\textbf{1},\textbf{1},-++-++-++,++-++-++-,\\
&&+-++-++-+,-+--+--+-,--+--+--+,\\
&&+--+--+--\}.
\end{eqnarray*}
And the $S$-subgroup $\mathbb{G}_{3C}(9)$ of $\mathfrak{S}(\Z_{2}^{9},C_{9})$ is given by
\begin{eqnarray*}
\mathbb{G}_{3C}(9)&=&\{\textbf{1},-\textbf{1},(-++-++-++)_{C},(-+--+--+-)_{C}\}.
\end{eqnarray*}
\end{example}

\begin{theorem}
$\mathbb{G}_{d}(n)\subseteq\bigcup_{a=0}^{d}\G_{n}\left(\frac{na}{d}\right)$ with equality only for
$d=1,n$.
\end{theorem}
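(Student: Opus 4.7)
The plan is to read off the Hamming weight of each element of $\mathbb{G}_d(n)$ directly from its unique $\X_{F,d}$-factorization, using the fact that the codewords $A_{i,d}X$ have pairwise disjoint supports, where by \emph{support} I mean the set of positions at which the sequence takes the value $-$. First I would record that since $X\in\G_n(n-1)$, each $C^jX$ has a single $-$ in a distinct position, so the support of $A_{i,d}X=\prod_{j\in T_i}C^jX$ has cardinality $|T_i|=k+1=n/d$. Because the $T_i=\{i,i+d,\ldots,i+kd\}$ are the $d$ residue classes modulo $d$ inside $\{0,1,\ldots,n-1\}$ (this uses $d\mid n$), they partition $\{0,\ldots,n-1\}$, and hence so do the supports of $A_{0,d}X,\ldots,A_{d-1,d}X$.

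Next, using that $\X_{F,d}$ is a $P(T)$-code, every $Y\in\mathbb{G}_d(n)=\X_{F,d}^{*}$ has a unique factorization $Y=\prod_{i\in S}A_{i,d}X$ with $S\subseteq\{0,1,\ldots,d-1\}$. Disjointness of supports yields $\omega(Y)=n-|S|\cdot n/d=n(d-|S|)/d$; setting $a=d-|S|$ gives $Y\in\G_n(na/d)$ with $a\in\{0,1,\ldots,d\}$, proving the inclusion $\mathbb{G}_d(n)\subseteq\bigcup_{a=0}^{d}\G_n(na/d)$.

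For the equality clause I would compare cardinalities. By the previous theorem $|\mathbb{G}_d(n)|=2^d$, while $\bigl|\bigcup_{a=0}^{d}\G_n(na/d)\bigr|=\sum_{a=0}^{d}\binom{n}{na/d}$. When $d=1$ the right-hand side collapses to $\binom{n}{0}+\binom{n}{n}=2$, and when $d=n$ it is $\sum_{a=0}^{n}\binom{n}{a}=2^n$; both match, so equality holds. For $1<d<n$ strict inclusion is already visible at the single level $a=1$: exactly $d$ elements of $\mathbb{G}_d(n)$, namely the singletons $A_{i,d}X$, have weight $n(d-1)/d$, whereas $|\G_n(n(d-1)/d)|=\binom{n}{n/d}\ge\binom{n}{1}=n>d$ (since $1<d<n$ forces $2\le n/d\le n-1$ and binomial coefficients are unimodal). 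Hence $\G_n(n(d-1)/d)\not\subseteq\mathbb{G}_d(n)$ and the inclusion is strict. The only step requiring any real care --- and which I expect to be the main (routine) obstacle --- is the partition claim for the $T_i$, which reduces to the remark that the $T_i$ are the $d$ residue classes modulo $d$ intersected with $\{0,1,\ldots,n-1\}$.
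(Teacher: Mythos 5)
Your proof is correct, and it is both more detailed and more complete than the paper's. The paper disposes of the theorem in one line, invoking the decomposition $\mathbb{G}_{d}(n)=\bigcup_{r\mid d}F_{r}(\Z_{2}^{n})$ together with the inclusion $F_{d}(\Z_{2}^{n})\subseteq\G_{n}(n/d)\cup\G_{n}(n-n/d)$. The underlying idea is the same as yours --- an element of $\mathbb{G}_{d}(n)$ is constant on residue classes modulo $d$, so its weight must be a multiple of $n/d$ --- but the inclusion the paper cites is actually false as stated once $d\geq 4$ (a sequence of exact period $4$ can carry two minus signs per period and so land in $\G_{n}(n/2)$), whereas your route through the unique $\X_{F,d}$-factorization and the pairwise disjoint supports of the $A_{i,d}X$ yields the correct target $\bigcup_{a=0}^{d}\G_{n}(na/d)$ directly. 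More importantly, the paper's proof says nothing about the clause ``with equality only for $d=1,n$''; your counting at the single level $a=d-1$ (the $d$ words $A_{i,d}X$ of weight $n-n/d$ against the $\binom{n}{n/d}\geq\binom{n}{1}=n>d$ elements of $\G_{n}(n-n/d)$) supplies exactly the missing strictness argument, and the boundary cases $d=1,n$ check out as you say. One cosmetic point: the support of $A_{i,d}X$ is not literally $T_{i}$ but its image under the bijection sending $j$ to the position of the $-$ in $C^{j}X$ (namely $j\mapsto -j\bmod n$); since that bijection carries residue classes modulo $d$ to residue classes modulo $d$, your partition claim, which you rightly flag as the one step needing care, goes through unchanged.
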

\begin{proof}
Follows from $F_{d}(\Z_{2}^{n})\subseteq\G_{n}\left(\frac{n}{d}\right)\cup\G_{n}\left(n-\frac{n}{d}\right)$.
\end{proof}

Now we show the lattice of $S$-subgroups $\mathbb{G}_{dC}(60)$ of 
$\mathfrak{S}(\Z_{2}^{60},C_{60})$ ordered by inclusion.

\begingroup\makeatletter\def\f@size{9}\check@mathfonts
\begin{center}
\begin{tikzpicture}
  \node (max_med) at (1,3) {$\mathbb{G}_{30C}(60)$};
  \node (a) at (-3,2) {$\mathbb{G}_{15C}(60)$};
  \node (b) at (1,0) {$\mathbb{G}_{10C}(60)$};
  \node (c) at (4,2) {$\mathbb{G}_{6C}(60)$};
  \node (d) at (-3,-1) {$\mathbb{G}_{5C}(60)$};
  \node (e) at (0,1) {$\mathbb{G}_{3C}(60)$};
  \node (f) at (4,-1) {$\mathbb{G}_{2C}(60)$};
  \node (min) at (0,-2) {$\mathbb{G}_{1C}(60)=\{\textbf{1},-\textbf{1}\}$};
  \node (max) at (5,4) {$\mathbb{G}_{60C}(60)=\Z_{2C}^{60}$};
  \node (g) at (5,1) {$\mathbb{G}_{20C}(60)$};
  \node (h) at (8,3) {$\mathbb{G}_{12C}(60)$};
  \node (i) at (8,0) {$\mathbb{G}_{4C}(60)$};
  \draw (min) -- (d) -- (a) -- (max_med) -- (b) -- (f) -- (i) -- (h) -- (max)
  (e) -- (min) -- (f) -- (c) -- (max_med) -- (max)
  (d) -- (b) -- (g) -- (max)
  (c) -- (h)
  (i) -- (g);
  \draw[preaction={draw=white, -,line width=6pt}] (a)-- (e) -- (c) -- (max_med)
  (c) -- (h)
  (f) -- (c)
  (e) -- (min);
\end{tikzpicture}
\end{center}
\endgroup

We finish this section we provide other proof to the Theorems 6 and 7 in [7]. We start with 
the lemma

\begin{lemma}
\begin{equation}
XC^{n}X\in F_{n}(\Z_{2}^{2n})
\end{equation}
for all $X$ in $\X_{C}=\G_{2n}(2n-1)$
\end{lemma}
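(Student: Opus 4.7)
The plan is to write $X \in \X_C = \G_{2n}(2n-1)$ explicitly as a sequence with a single minus sign at some position $k \in \{0, 1, \ldots, 2n-1\}$ (and pluses elsewhere), then compute $XC^{n}X$ componentwise and verify it lies in $F_{n}(\Z_{2}^{2n})$ by locating its negative entries.

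First I would fix $X \in \G_{2n}(2n-1)$ and let $k$ be the unique index with $x_k = -$, so $x_i = +$ for $i \neq k$. The shifted sequence $C^{n}X$ then has its single $-$ at position $(k-n) \bmod 2n$. Setting $Y = XC^{n}X$ and using componentwise multiplication in $\Z_{2}^{2n}$, one has $y_i = x_i \cdot x_{(i+n) \bmod 2n}$, so $y_i = -$ if and only if exactly one of $x_i$ and $x_{(i+n) \bmod 2n}$ equals $-$. Since $X$ has exactly one $-$, this forces the two negative positions of $Y$ to be $i = k$ and $i = (k+n) \bmod 2n$.

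Next I would verify that $Y \in F_{n}(\Z_{2}^{2n})$. By the preceding step, the two negative entries of $Y$ are separated by $n$ modulo $2n$, so applying the shift $C^{n}$ to $Y$ permutes its negatives among themselves and fixes every positive component; hence $Y$ is $n$-periodic. To see that the minimal period is exactly $n$ (and not a proper divisor of $n$), note that a sequence of length $2n$ with minimal period $d \mid n$, $d < n$, and with some $-$ entry would have $-$ appearing in at least $2n/d > 2$ positions; but $Y$ has only two negative entries. Therefore the minimal period equals $n$, so $Y \in F_{n}(\Z_{2}^{2n})$, as claimed.

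I do not expect any serious obstacle: the argument reduces to counting the negative components of $XC^{n}X$. The only subtlety is distinguishing $n$-periodicity from minimal period equal to $n$, which is settled by the elementary count of negatives above.
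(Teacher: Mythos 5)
Your argument is correct, and it is a genuinely self-contained variant of what the paper does. The paper's proof handles only the single base case $X=-++\cdots+$ (dismissed with ``clearly''), and then transports the conclusion to every other codeword $C^{i}X$ via the identity $(C^{i}X)C^{n}(C^{i}X)=C^{i}(XC^{n}X)$ together with the $C$-invariance of $F_{n}(\Z_{2}^{2n})$. You instead treat an arbitrary $X\in\G_{2n}(2n-1)$ at once by locating the unique $-$ at position $k$ and computing componentwise that $XC^{n}X$ has exactly two negative entries, at $k$ and $(k+n)\bmod 2n$; this makes the orbit argument unnecessary. What your route buys is twofold: it avoids relying on the reader to verify the base case, and--more importantly--it explicitly settles the point the paper glosses over, namely that membership in $F_{n}(\Z_{2}^{2n})$ requires minimal period exactly $n$ (equivalently, orbit size $n$ under $C_{2n}$), not merely $n$-periodicity; your count showing that a proper period $d\mid n$, $d<n$, would force at least $2n/d>2$ negative entries closes that gap cleanly. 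What the paper's route buys is brevity and reusability: the equivariance identity and the $C$-invariance of $F_{n}$ are exactly the tools reused in the surrounding theorems, so its proof fits the section's template even though it is less explicit.
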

\begin{proof}
Clearly $XC^{n}X$ is in $F_{n}(\Z_{2}^{2n})$ when $X=-+++\cdots+++$. As 
$$(C^{i}X)C^{n}(C^{i}X)=C^{i}(XC^{n}X)$$
for any other codeword $C^{i}X$ in the code $\X_{C}$, then 
$C^{i}(XC^{n}X)\in F_{n}(\Z_{2}^{2n})$ for all $1\leq i\leq 2n-1$, since 
$\left\langle C_{n}\right\rangle$ defines a partition on $F_{n}(\Z_{2}^{2n})$.
\end{proof}

\begin{theorem}
If $Y_{C}\in F(\Z_{2C}^{2n})$, then $Y_{C}^{2}\setminus\{\textbf{1}\}\not\in F(\Z_{2C}^{2n})$.
\end{theorem}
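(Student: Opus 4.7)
The plan is to exhibit, inside the orbit product $Y_C^2$, at least one non-identity orbit that fails to have full period $2n$. From the product formula for the cyclic Schur ring recalled earlier, one has
\[ Y_C^2 \;=\; \bigcup_{k=0}^{2n-1} (Y\,C^k\,Y)_C, \]
and the $k=0$ contribution is $(Y\cdot Y)_C = \{\mathbf{1}\}$, since every element of $\Z_2^{2n}$ has order $2$. The natural candidate for a short-period contribution is the $k=n$ term, and my aim is to show that $(Y C^n Y)_C$ lies outside $F(\Z_{2C}^{2n})$ and is distinct from $\{\mathbf{1}\}$.

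First I would verify that $W := Y C^n Y$ is fixed by $C^n$. Using componentwise multiplication in the abelian group $\Z_2^{2n}$, together with $C^{2n}=\mathrm{id}$, one computes
\[ C^n(W) \;=\; C^n(Y)\cdot C^{2n}(Y) \;=\; C^n(Y)\cdot Y \;=\; Y\cdot C^n Y \;=\; W. \]
Hence the period of $W$ divides $n$, so $W\in F_d(\Z_2^{2n})$ for some divisor $d$ of $n$, and in particular $d<2n$. Consequently the orbit $W_C$ belongs to $\widehat{F}(\Z_{2C}^{2n})$ and not to $F(\Z_{2C}^{2n})$.

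Next I would rule out the degenerate case $W=\mathbf{1}$. If $YC^n Y = \mathbf{1}$, then using $Y^2=\mathbf{1}$ we obtain $C^n Y = Y$, forcing the period of $Y$ to divide $n$ and contradicting the hypothesis $Y_C \in F(\Z_{2C}^{2n})$. Therefore $W_C$ is a non-trivial orbit contained in $Y_C^2\setminus\{\mathbf{1}\}$ which fails to lie in $F(\Z_{2C}^{2n})$, which is exactly the claim of the theorem.

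The argument is short and symmetry-driven; I do not anticipate any real obstacle beyond the step $W\neq \mathbf{1}$, which is precisely where the freeness assumption on $Y_C$ is used. The construction also parallels the preceding lemma, in which the word $X C^n X$ was shown to land in $F_n(\Z_2^{2n})$: the same $C^n$-invariance is the mechanism being exploited here, now applied to an arbitrary free $Y$ rather than only to a codeword $X$.
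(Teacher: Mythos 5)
Your proof is correct, and it targets the same witness as the paper -- the $k=n$ term $W=YC^{n}Y$ of the orbit product -- but justifies its short period by a different and more elementary mechanism. The paper first proves a lemma that $XC^{n}X\in F_{n}(\Z_{2}^{2n})$ for every codeword $X$ of $\X_{C}=\G_{2n}(2n-1)$, then factors $YC^{n}Y=C^{i_{1}}(XC^{n}X)\cdots C^{i_{r}}(XC^{n}X)$ through the code and concludes that the product again lies in $\bigcup_{d\mid n}F_{d}(\Z_{2}^{2n})$. You instead observe directly that $C^{n}(W)=C^{n}(Y)\cdot C^{2n}(Y)=W$, so $W$ is $C^{n}$-invariant and its period divides $n$; this one-line computation makes the preceding lemma unnecessary for this theorem. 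Your argument also has a genuine advantage in completeness: you explicitly exclude the degenerate case $W=\mathbf{1}$ (which would force $C^{n}Y=Y$ and contradict $Y_{C}\in F(\Z_{2C}^{2n})$), a point the paper's proof passes over silently, and without which the exhibited orbit could in principle coincide with the excluded $\{\mathbf{1}\}$. What the paper's route buys in exchange is structural information -- it locates $YC^{n}Y$ via the explicit $\X_{C}$-factorization, consistent with the coding-theoretic viewpoint used throughout -- whereas your route is shorter, uses only that $C^{n}$ is an automorphism of order $2$ modulo $C^{2n}=\mathrm{id}$, and would generalize verbatim to any length divisible by a fixed shift.
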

\begin{proof}
Take $Y=C^{i_{1}}X\cdots C^{i_{r}}X$ in $F(\Z_{2}^{2n})$. Then
\begin{eqnarray*}
YC^{n}Y&=&C^{i_{1}}X\cdots C^{i_{r}}XC^{i_{1}+n}X\cdots C^{i_{r}+n}X\\
&=&C^{i_{1}}(XC^{n}X)\cdots C^{i_{r}}(XC^{n}X).
\end{eqnarray*}
From the above lemma $XC^{n}X$ is in $F_{n}(\Z_{2}^{2n})$ for all $X$ in $\X_{C}$. Then
$YC^{n}Y\in F_{n}(\Z_{2}^{2n})$ for all $Y$ in $F(\Z_{2}^{n})$ and
$$Y_{C}^{2}\setminus\{\textbf{1}\}\notin F(\Z_{2C}^{2n})$$
as we promised to show.
\end{proof}

\begin{lemma}
\begin{equation}
XC^{k}X\notin F_{d}(\Z_{2}^{2n+1})
\end{equation}
for no $X$ in $\X_{C}=\G_{2n+1}(2n)$ and for no $k$ ranging in $[1,2n]$, $d<2n+1$.
\end{lemma}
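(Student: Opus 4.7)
The plan is to observe that $XC^k X$ has exactly two negative components, and then use the parity of $2n+1$ to rule out periodicity with any proper period.

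First, I would establish the weight of $XC^k X$. Since $X \in \G_{2n+1}(2n)$, the sequence $X$ has exactly one $-$ component, say at position $i$. The shift $C^k X$ then has its unique $-$ at position $i - k \bmod (2n+1)$. Because $k \in [1,2n]$, we have $k \not\equiv 0 \bmod (2n+1)$, so these two positions are distinct. Componentwise multiplication in $\Z_2$ yields $-$ exactly at the positions where the two factors disagree, so $XC^k X$ carries exactly two $-$ signs, i.e., $XC^k X \in \G_{2n+1}(2n-1)$.

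Next I would invoke the structural description of $F_d(\Z_2^{2n+1})$. By assumption every $Z \in F_d(\Z_2^{2n+1})$ has the form $(Y,Y,\ldots,Y)$ with $Y \in \Z_2^d$ repeated $(2n+1)/d$ times, so the total number of $-$ components of $Z$ equals $\frac{2n+1}{d}\cdot \omega^{-}(Y)$, where $\omega^{-}(Y)$ denotes the number of $-$ components of $Y$. Thus if $XC^k X$ were to lie in $F_d(\Z_2^{2n+1})$ we would need
\begin{equation*}
\frac{2n+1}{d}\cdot \omega^{-}(Y) = 2.
\end{equation*}

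Finally I would close the argument by a parity check. Since $2n+1$ is odd and $d \mid 2n+1$, the quotient $(2n+1)/d$ is an odd positive integer. The hypothesis $d < 2n+1$ forces $(2n+1)/d \geq 3$, so the left-hand side of the displayed equation is at least $3$ whenever $\omega^{-}(Y) \geq 1$, and equals $0$ otherwise, never equal to $2$. This contradiction proves $XC^k X \notin F_d(\Z_2^{2n+1})$. There is no real obstacle here; the entire lemma is driven by the single observation that the odd integer $2n+1$ has no proper divisor whose complementary factor divides $2$.
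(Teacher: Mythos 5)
Your proof is correct and rests on the same parity obstruction as the paper's: the two $-$ components of $XC^{k}X$ cannot be distributed over the $(2n+1)/d\geq 3$ identical blocks of an element of $F_{d}(\Z_{2}^{2n+1})$, because $2n+1$ is odd. The paper reaches the same conclusion by writing out $XC^{k}X$ for the canonical $X=-++\cdots+$ and observing that a proper period would force the two runs of $+$'s, of lengths $2n-k$ and $k-1$, to be equal, i.e.\ $k=(2n+1)/2$, and then transports the result to every $X$ via $C^{i}(XC^{k}X)=(C^{i}X)C^{k}(C^{i}X)$; your weight count handles all $X$ and all divisors $d$ uniformly, which is marginally cleaner but not a genuinely different idea.
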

\begin{proof}
Take $X=-++\cdots++$ in $\X_{C}$. Then $XC^{k}X=-\overset{2n-k}{\overbrace{+\cdots+}}-\overset{k-1}{\overbrace{+\cdots+}}$. If we want $k-1=2n-k$, then $k=\frac{2n+1}{2}$, which is 
not possible. Hence $XC^{k}X$ is no contained in $F_{d}(\Z_{2}^{2n+1})$, $d<2n+1$. As 
$C^{i}(XC^{k}X)=(C^{i}X)C^{k}(C^{i}X)$, it is followed the statement.
\end{proof}

\begin{theorem}
If $X_{C}\in F(\Z_{2C}^{2n+1})$, then $X_{C}^{2}\setminus\{\textbf{1}\}\in F(\Z_{2C}^{2n+1})$.
\end{theorem}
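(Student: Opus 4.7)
The plan is to mirror the proof of Theorem 11, with the odd-parity Lemma just proved replacing the even-parity one. First, I would write an arbitrary element $X$ with $X_{C}\in F(\Z_{2C}^{2n+1})$ as a product $X = C^{i_{1}}X_{0}\cdots C^{i_{r}}X_{0}$ of cyclic shifts of the code word $X_{0}=-+\cdots +\in\X_{C}$. Then, exploiting the abelianness of $\Z_{2}^{2n+1}$, I interleave the factors of $X$ with those of $C^{k}X$ and regroup:
\begin{equation*}
XC^{k}X \;=\; C^{i_{1}}X_{0}\cdots C^{i_{r}}X_{0}\cdot C^{i_{1}+k}X_{0}\cdots C^{i_{r}+k}X_{0} \;=\; \prod_{j=1}^{r} C^{i_{j}}\bigl(X_{0}C^{k}X_{0}\bigr).
\end{equation*}

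The preceding Lemma then delivers $X_{0}C^{k}X_{0}\in F(\Z_{2C}^{2n+1})$ for every $k\in[1,2n]$, and since the set $F(\Z_{2C}^{2n+1})$ is stable under the $C_{2n+1}$-action, each factor $C^{i_{j}}(X_{0}C^{k}X_{0})$ likewise lies in $F(\Z_{2C}^{2n+1})$. Up to this point the argument runs completely in parallel with the even case.

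The hard part — and the key contrast with Theorem 11 — will be concluding that the \emph{product} $XC^{k}X$ itself remains in $F(\Z_{2C}^{2n+1})$. In the even case the analogous step is free because the target set $F_{n}(\Z_{2}^{2n})$ is a subgroup of $\Z_{2}^{2n}$; in our setting, however, the family $F(\Z_{2C}^{2n+1})$ of free orbits is \emph{not} a subgroup of $\Z_{2}^{2n+1}$, so factor-wise freeness does not automatically pass to the product. To close this gap I would use the very rigid form of each factor — namely that $X_{0}C^{k}X_{0}$ has exactly two $-$ entries, at positions $0$ and $-k$ modulo $2n+1$, so that $C^{i_{j}}(X_{0}C^{k}X_{0})$ has its $-$'s at $\{-i_{j},\,-k-i_{j}\}$ — and then argue, crucially invoking the oddness of $2n+1$, that the symmetric difference $\triangle_{j=1}^{r}\{-i_{j},-k-i_{j}\}$ of these two-element sets cannot be invariant under any nontrivial cyclic shift in $\Z_{2n+1}$. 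This is the step that carries the real content, and once established it yields $XC^{k}X\in F_{2n+1}(\Z_{2}^{2n+1})=F(\Z_{2C}^{2n+1})$, completing the proof.
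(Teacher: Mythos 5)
Your setup faithfully mirrors the paper's argument: the regrouping $XC^{k}X=\prod_{j}C^{i_{j}}(X_{0}C^{k}X_{0})$ and the appeal to the preceding lemma are exactly what the paper does, and you are more candid than the paper in flagging the final step as the one carrying all the content. The paper simply asserts that $YC^{k}Y\in F_{d}(\Z_{2}^{2n+1})$ would force $X_{0}C^{k}X_{0}\in F_{d}(\Z_{2}^{2n+1})$, with no justification; your observation that $F(\Z_{2C}^{2n+1})$, unlike $F_{n}(\Z_{2}^{2n})$ in Theorem 11, is not closed under products is precisely why that assertion needs an argument.

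Unfortunately the claim you propose to prove there --- that $S\triangle(S-k)$ (where $S$ is the set of $-$ positions of $X$) is never invariant under a nontrivial shift when $S$ is not --- is false, so the gap cannot be closed along these lines; in fact the theorem itself fails for composite $2n+1$. Take $2n+1=15$ and $X=AB$, where $A$ has its $-$'s at $\{0,3,6,9,12\}$ and $B$ at $\{0,5,10\}$, so that $X$ has $-$'s at $S=\{3,5,6,9,10,12\}$. Then $X$ has no period $3$ or $5$ (compare positions $0$ and $3$, and $0$ and $5$), so $X_{C}\in F(\Z_{2C}^{15})$. But $C^{3}A=A$, hence
\begin{equation*}
XC^{3}X=(AC^{3}A)(BC^{3}B)=BC^{3}B,
\end{equation*}
whose $-$'s sit at $\{0,5,10\}\triangle\{2,7,12\}=\{0,2,5,7,10,12\}$; this word is $(-+-++,-+-++,-+-++)\in F_{5}(\Z_{2}^{15})$, which is neither free nor $\textbf{1}$. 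The mechanism is that for composite odd length an aperiodic $S$ can be the symmetric difference of a $d$-periodic set and a $d'$-periodic set for two distinct proper divisors; multiplying by $C^{k}$ with $d\mid k$ annihilates the first summand and leaves the second, periodic, one. (When $2n+1$ is prime the only admissible $d$ is $1$, and then the statement does hold by a parity count, since $\vert S\triangle(S-k)\vert$ is even while $\vert\Z_{2n+1}\vert$ is odd.) So the defect is not in your strategy relative to the paper's --- both break at the same step --- but in the statement itself, and any correct repair must restrict to $2n+1$ prime or otherwise exclude words such as $X=AB$ above.
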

\begin{proof}
Let $Y=C^{i_{1}}XC^{i_{2}}X\cdots C^{i_{r}}X$ such that all the $i_{j}$ are not in arithmetic
progression. If $YC^{k}Y=C^{i_{1}}(XC^{k}X)C^{i_{2}}(XC^{k}X)\cdots C^{i_{r}}(XC^{k}X)$ is
contained in some $F_{d}(\Z_{2}^{2n+1})$, $d\vert(2n+1)$, $d<2n+1$, then $XC^{k}X$ must be 
contained in $F_{d}(\Z_{2}^{2n+1})$, but is not possible by the previous lemma.
\end{proof}

\section{Schur ring $\mathfrak{S}(\Z_{2}^{n},\Delta_{n})$}

Let $\delta_{a}\in S_{n}$ act on $X\in\Z_{2}^{n}$ by decimation, that is, 
$\delta_{a}(x_{i})=x_{ai(\mod n)}$ for all $x_{i}$ in $X$, $(a,n)=1$ and let $\Delta_{n}$ denote 
the set of this $\delta_{a}$. The set $\Delta_{n}$ is a group of order $\phi(n)$ isomorphic to
$\Z_{n}^{*}$, the group the units of $\Z_{n}$, where $\phi$ is called the Euler totient function. 
Clearly $\mathfrak{S}(\Z_{2}^{n},\Delta_{n})$ is an $S$-partition of $\Z_{2}^{n}$. In this section,
we will construct $\Delta_{n}$-codes for $S$-subgroups of $\mathfrak{S}(\Z_{2}^{n},\Delta_{n})$.

We will use the commutation relation $C^{i}\delta_{a}=\delta_{a}C^{ia}$ for to prove all of results
in this section. We begin for show that $\G_{n}(n-1)$ is partitioned in three equivalence class

\begin{proposition}
\begin{equation}
\Delta_{n}\G_{n}(n-1)=\{X\}\cup\mathcal{X}_{\Z_{n}^{*}}\cup\mathcal{X}_{\Z_{n}\setminus\Z_{n}^{*}}
\end{equation}
where $X=-++\cdots++$ and
\begin{eqnarray}
\mathcal{X}_{\Z_{n}^{*}}&=&\{C^{a_{1}}X,C^{a_{2}}X,...,C^{a_{\phi(n)}}X:\ (a_{i},n)=1\}\\
\mathcal{X}_{\Z_{n}\setminus\Z_{n}^{*}}&=&\{C^{d_{1}}X,C^{d_{2}}X,...,C^{d_{r}}X:\ (d_{i},n)\neq1\}
\end{eqnarray}
$r=n-\phi(n)-1$.
\end{proposition}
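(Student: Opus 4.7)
The plan is to leverage the identity $\G_n(n-1) = \X_C = \{C^k X : 0 \leq k \leq n-1\}$ with $X = -++\cdots++$, which is already available from the discussion following Theorem \ref{theo_base_principal}. First I would verify that $X$ is fixed by every $\delta_a \in \Delta_n$: since the $i$-th component of $\delta_a X$ equals $X_{ai \bmod n}$, the unique negative entry shows up at position $i$ iff $ai \equiv 0 \pmod n$, and because $(a,n)=1$ this forces $i = 0$. Thus $\delta_a X = X$ for all $a \in \Z_n^*$ and $\{X\}$ is a $\Delta_n$-orbit by itself.

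Next I would feed this into the commutation identity $C^i \delta_a = \delta_a C^{ia}$ recalled at the start of the section. Substituting $i \mapsto k a^{-1}$ (where $a^{-1}$ is the inverse of $a$ in $\Z_n^*$) and evaluating at $X$ gives
\begin{equation*}
\delta_a(C^k X) \;=\; C^{k a^{-1}} \delta_a(X) \;=\; C^{k a^{-1}} X
\end{equation*}
for every $k$ and every $a \in \Z_n^*$. Hence the action of $\Delta_n$ on the codewords of $\X_C$ is nothing but multiplication of the shift index by units mod $n$, and in particular the gcd $(k,n)$ is preserved by this action.

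Finally I would close the argument in two short steps. The invariance of the gcd shows that both $\mathcal{X}_{\Z_n^*} = \{C^k X : (k,n) = 1\}$ and $\mathcal{X}_{\Z_n \setminus \Z_n^*} = \{C^k X : 1 \leq k \leq n-1,\ (k,n) \neq 1\}$ are $\Delta_n$-invariant; moreover $\mathcal{X}_{\Z_n^*}$ is in fact a single $\Delta_n$-orbit, namely the orbit of $CX$, because $\{a^{-1} : a \in \Z_n^*\} = \Z_n^*$ exhausts all admissible shifts. A cardinality check then confirms the partition,
\begin{equation*}
|\{X\}| + |\mathcal{X}_{\Z_n^*}| + |\mathcal{X}_{\Z_n \setminus \Z_n^*}| \;=\; 1 + \phi(n) + (n-1-\phi(n)) \;=\; n \;=\; |\G_n(n-1)|,
\end{equation*}
so the three disjoint invariant sets exhaust $\G_n(n-1)$. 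No step presents a genuine obstacle; the only care required is in correctly inverting $a$ when applying the commutation relation, which is purely mechanical.
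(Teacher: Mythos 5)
Your proposal is correct and follows essentially the same route as the paper: both arguments rest on the observation that $\delta_{a}X=X$ together with the commutation relation $\delta_{a}C^{i}X=C^{a^{-1}i}X$, which shows the shift index is multiplied by a unit and hence the two sets $\mathcal{X}_{\Z_{n}^{*}}$ and $\mathcal{X}_{\Z_{n}\setminus\Z_{n}^{*}}$ are each $\Delta_{n}$-invariant. The extra details you supply (the gcd-preservation remark and the cardinality check) are harmless elaborations of what the paper leaves implicit.
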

\begin{proof}
It is very easy to see that $X=-++\cdots++$ is fixed under the action of $\Delta_{n}$. Also, as 
$\delta_{a}C^{i}X=C^{a^{-1}i}\delta_{a}X=C^{a^{-1}i}X$, then $\Delta_{n}\mathcal{X}_{\Z_{n}^{*}}=\mathcal{X}_{\Z_{n}^{*}}$ and 
$\Delta_{n}\mathcal{X}_{\Z_{n}\setminus\Z_{n}^{*}}=\mathcal{X}_{\Z_{n}\setminus\Z_{n}^{*}}$.
\end{proof}

As each $C^{i}X$ in $\mathcal{X}_{\Z_{n}^{*}}$ or in $\mathcal{X}_{\Z_{n}\setminus\Z_{n}^{*}}$ 
is atomic, then $\mathcal{X}_{\Z_{n}^{*}}$ and $\mathcal{X}_{\Z_{n}\setminus\Z_{n}^{*}}$ are 
$\Delta_{n}$-code and hence $\mathcal{X}_{\Z_{n}^{*}}^{*}$ and 
$\mathcal{X}_{\Z_{n}\setminus\Z_{n}^{*}}^{*}$ are $S$-subgroups in
$\mathfrak{S}(\Z_{2}^{n},\Delta_{n})$ with $\vert\mathcal{X}_{\Z_{n}^{*}}^{*}\vert=2^{\phi(n)}$ and 
$\vert\mathcal{X}_{\Z_{n}\setminus\Z_{n}^{*}}^{*}\vert=2^{n-\phi(n)-1}$.

On the other hand, let 
$$(\mathsf{P}_{Y}(0),\mathsf{P}_{Y}(1),...,\mathsf{P}_{Y}(n-1))$$
denote the autocorrelation vector of $Y$ in $\Z_{2}^{n}$ and let $\mathfrak{A}(\Z_{2}^{n})$
denote the set of all this. Let $X_{1}+X_{2}+\cdots +X_{n}=a$ denote the plane in $\Z^{n}$ in the 
indeterminates $X_{i}$, $i=1,2,...,n$ and let 
$\theta:\Z_{2}^{n}\rightarrow\mathfrak{A}(\Z_{2}^{n})$ be the map defined by 
$\theta(Y)=(\mathsf{P}_{Y}(0),\mathsf{P}_{Y}(1),\dots ,\mathsf{P}_{Y}(n-1))$.

The decimation group $\Delta_{n}$ do not alter the set of values which $\mathsf{P}_{X}(k)$ takes
on, but merely the order in which they appear, i.e., if $Y=\delta_{a}X$ then 
$\mathsf{P}_{Y}(k)=\mathsf{P}_{X}(ka)$. Therefore, we have the commutative diagram
\begin{equation}\label{diagram}
\xymatrix{
 \Z_{2}^{n} \ar[d]^{\theta} \ar[r]^{\delta_{r}} & \Z_{2}^{n} \ar[d]^{\theta}\\
   \mathfrak{A}(\Z_{2}^{n}) \ar[r]^{\delta_{r}} & \mathfrak{A}(\Z_{2}^{n}) 
}
\end{equation}
and $\theta \circ \delta_{r} = \delta_{r}\circ \theta.$

Let $Y\in\Z_{2}^{n}$ such that $\theta(Y)=(n,d,d,...,d)$. Such a binary sequence is known as 
binary sequence with $2$-levels autocorrelation value and are important by its applications on
telecommunication. We want to construct a $\Delta_{n}$-code for some $S$-subgroup $H$ of 
$\mathfrak{S}(\Z_{2}^{n},\Delta_{n})$ containing such $Y$. From (\ref{diagram}) is followed that 
$\theta(Y)=\delta_{a}\theta(Y)=\theta(\delta_{a}Y)$, for all $\delta_{a}\in\Delta_{n}$. Hence $Y$ 
and $\delta_{a}Y$ have the same autocorrelation vector. For $Y$ fullfilling $\delta_{a}Y=Y$ for some 
$\delta_{a}$ in $\Delta_{n}$ we have the following definition

\begin{definition}
Let $a$ be a unit in $\Z_{n}^{*}$. A word $Y$ in $\Z_{2}^{n}$ is $\delta_{a}$-\textbf{invariant}
if $\delta_{a}Y=Y$. Denote by $\mathbb{I}_{n}(a)$ the set of these $Y$.
\end{definition}

If $Y$ is in $\mathbb{I}_{n}(a)$, then $\delta_{r}Y$ is in $\mathbb{I}_{n}(a)$, too. Also
$\delta_{a}(YZ)=\delta_{a}Y\delta_{a}Z=YZ$ for all $Y,Z$ in $\mathbb{I}_{n}(a)$. Then
$\mathbb{I}_{n}(a)$ is an $S$-subgroup of $\mathfrak{S}(\Z_{2}^{n},\Delta_{n})$. Now, we shall see
that all factorization of words in $\mathbb{I}_{n}(a)$ is relationated with cyclotomic coset
of $a$ module $n$. First, we have the following definition

\begin{definition}
Let $a$ relative prime to $n$. The cyclotomic coset of $a$ module $n$ is defined by
\begin{equation*}
\mathsf{C}_{s}=\{s,sa,sa^{2},\cdots,sa^{t-1}\}.
\end{equation*}
where $sa^{t}\equiv s\mod n$. A subset $\{s_{1},s_{2},\dots ,s_{r}\}$ of 
$\Z_{n}$ is called complete set of representatives of cyclotomic coset of $a$ modulo $n$ if 
$\mathsf{C}_{i_{1}}$,$\mathsf{C}_{i_{2}}$,..., $\mathsf{C}_{i_{r}}$ are distinct and are a 
partition of $\Z_{n}$. 
\end{definition}

Take $Y=C^{i_{1}}XC^{i_{2}}X\cdots C^{i_{r}}X$ in $\mathbb{I}_{n}(a)$ with $X=-++\cdots++$. 
We want $\delta_{a}Y=Y$. Then
\begin{eqnarray*}
\delta_{a}Y&=&\delta_{a}C^{i_{1}}X\delta_{a}C^{i_{2}}X\cdots\delta_{a}C^{i_{r}}X\\
&=&C^{i_{1}a^{-1}}\delta_{a}XC^{i_{2}a^{-1}}\delta_{a}X\cdots C^{i_{r}a^{-1}}\delta_{a}X\\
&=&C^{i_{1}a^{-1}}XC^{i_{2}a^{-1}}X\cdots C^{i_{r}a^{-1}}X
\end{eqnarray*}
since $\delta_{a}X=X$. As must be $\delta_{a}Y=Y$, then $i_{k}=a^{-1}i_{j}$ or $i_{j}=ai_{k}$ for
$1\leq k,j\leq r$. Let $\mathsf{C}_{s}X$ denote the word $C^{s}XC^{sa}X\cdots C^{sa^{t_{s}-1}}X$.
Then all $Y$ in $\mathbb{I}_{n}(a)$ has the form 
$Y=\mathsf{C}_{s_{1}}^{\epsilon_{1}}X\mathsf{C}_{s_{2}}^{\epsilon_{r}}X\cdots\mathsf{C}_{s_{r}}^{\epsilon}X$, with $\epsilon_{i}=0,1$. As $\mathbb{I}_{n}(a)$ is an $S$-subgroup in 
$\mathfrak{S}(\Z_{2}^{n},\Delta_{n})$, $\delta_{r}\mathsf{C}_{s_{i}}X=\mathsf{C}_{s_{j}}X$ and

\begin{equation}\label{alpha_invariante}
\mathcal{X}_{\mathbb{I}(a)}=\{X,\mathsf{C}_{s_{1}}X,\ \mathsf{C}_{s_{2}}X,...,\ \mathsf{C}_{s_{r}}X\}
\end{equation}

is a $\Delta_{n}$-code for $\mathbb{I}_{n}(a)$. Also $\X_{\mathbb{I}(a)}$ is a $P(T)$-code
with 
$$P(T)=\{\{0\},\mathsf{C}_{s_{1}},\mathsf{C}_{s_{2}},...,\mathsf{C}_{s_{r}}\}$$
and $\{s_{1},s_{2},...,s_{r}\}$ a complete set of representatives. Hence $\X_{\mathbb{I}(a)}^{*}$ 
has order $2^{r+1}$, where $r$ is the number of cyclotomic cosets of $a$ module $n$

In the table 1, binary sequences with $2$-level autocorrelation values with their
respective $\delta_{a}$-invariants $S$-subgroups are shown

\begin{table}[ht]
\caption{Binary sequences with $2$-level autocorrelation values}
\centering
\begin{tabular}{c c c}
\hline\hline
Sequences & Autocorrelation vector & $S$-subgroup\\
\hline
$\mathsf{C}_{0}X=X\in\G_{n}(n-1)$ & $(n,n-4,...,n-4)$ & $\mathbb{I}_{n}(a)$ for all $a\in\Z_{n}^{*}$\\
$\mathsf{C}_{2}X=CXC^{2}XC^{4}X$ & $(7,-1,...,-1)$ & $\mathbb{I}_{7}(3)$\\
$\mathsf{C}_{3}X=CXC^{3}XC^{4}XC^{5}XC^{9}X$ & $(11,-1,...,-1)$ & $\mathbb{I}_{11}(3)$\\
$\mathsf{C}_{0}X\mathsf{C}_{3}X=XCXC^{3}XC^{9}X$ & $(13,1,...,1)$ & $\mathbb{I}_{13}(3)$\\
$\mathsf{C}_{0}X\mathsf{C}_{5}X\mathsf{C}_{7}X\mathsf{C}_{10}X\mathsf{C}_{11}X$ & $(15,-1,...,-1)$ & $\mathbb{I}_{15}(4)$\\
\hline\hline
\end{tabular}
\end{table}

On the other hand, we have the following theorem

\begin{theorem}
If $\left\langle b\right\rangle$ is a subgroup of $\left\langle a\right\rangle$, then 
$\mathbb{I}_{n}(a)\leq\mathbb{I}_{n}(b)$.
\end{theorem}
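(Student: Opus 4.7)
The plan is to unpack what the hypothesis $\langle b\rangle\le\langle a\rangle$ means inside $\Z_n^{*}$ and translate it into an identity between the decimation maps $\delta_a$ and $\delta_b$, after which the inclusion of fixed-point sets is immediate.

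First I would observe that since $\langle b\rangle\le\langle a\rangle$ in $\Z_n^{*}$, there is an integer $k\ge 1$ such that $b\equiv a^{k}\pmod n$. The key computational step is then to verify that decimations compose multiplicatively: for $x_i$ the $i$-th component of $X\in\Z_2^n$,
\begin{equation*}
(\delta_a\circ\delta_c)(x_i)=\delta_a(x_{ci\,\mathrm{mod}\,n})=x_{aci\,\mathrm{mod}\,n}=\delta_{ac}(x_i),
\end{equation*}
so $\delta_a\delta_c=\delta_{ac}$ for any units $a,c\in\Z_n^{*}$. Iterating gives $\delta_{a^{k}}=\delta_a^{\,k}$, and therefore $\delta_b=\delta_a^{\,k}$.

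Next, take any $Y\in\mathbb{I}_n(a)$, i.e.\ $\delta_a Y=Y$. Applying $\delta_a$ repeatedly yields $\delta_a^{\,k}Y=Y$, which by the previous identification reads $\delta_b Y=Y$. Hence $Y\in\mathbb{I}_n(b)$, so $\mathbb{I}_n(a)\subseteq\mathbb{I}_n(b)$. Since both $\mathbb{I}_n(a)$ and $\mathbb{I}_n(b)$ were already shown to be $S$-subgroups of $\mathfrak{S}(\Z_2^n,\Delta_n)$ (they are closed under the componentwise product because $\delta_a$ is an automorphism of $\Z_2^n$), this set inclusion is automatically a subgroup inclusion, i.e.\ $\mathbb{I}_n(a)\le\mathbb{I}_n(b)$.

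There is no real obstacle here; the only point requiring care is the correct direction of the composition law $\delta_a\delta_c=\delta_{ac}$, which one must check rather than assume, since a careless reading of $\delta_a(x_i)=x_{ai}$ can suggest the opposite order. Once that identification is in place, the proof is a one-line deduction from the definition of $\mathbb{I}_n(\cdot)$ as a fixed-point set of a cyclic action generated by $\delta_a$, where passing to a subgroup of the acting group only enlarges the fixed-point set.
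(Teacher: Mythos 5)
Your proof is correct, but it takes a genuinely different and more elementary route than the paper. You argue directly at the level of fixed-point sets: from $\langle b\rangle\le\langle a\rangle$ you extract $b\equiv a^{k}\pmod n$, verify the composition law $\delta_a\delta_c=\delta_{ac}$ (which indeed needs the check you perform, since $\delta_a(x_i)=x_{ai}$ could suggest the contravariant order; here it is moot anyway because $\Z_n^{*}$ is abelian), conclude $\delta_b=\delta_a^{k}$, and then the inclusion $\mathbb{I}_n(a)\subseteq\mathbb{I}_n(b)$ is the standard fact that the fixed-point set of a group is contained in that of any subgroup. The paper instead works at the level of the generating codes: it shows that each cyclotomic coset $\mathsf{C}_s^{a}$ (an orbit of $\langle a\rangle$ on $\Z_n$) decomposes as a union of cosets $y_i\mathsf{C}_s^{b}$, so that every codeword $\mathsf{C}_s^{a}X$ of $\X_{\mathbb{I}_n(a)}$ factors as a product of codewords of $\X_{\mathbb{I}_n(b)}$, whence the containment of the generated subgroups together with the rank comparison $\vert\X_{\mathbb{I}_n(a)}\vert\le\vert\X_{\mathbb{I}_n(b)}\vert$. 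Your argument is shorter, avoids any appeal to the code structure, and generalizes verbatim to arbitrary group actions; the paper's argument is longer but yields extra structural information (the explicit refactorization of the codewords and the inequality of ranks), which fits the paper's program of describing $S$-subgroups through their generator codes. Both are valid proofs of the stated inclusion.
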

\begin{proof}
Let $\mathsf{C}_{1}^{a}$ and $\mathsf{C}_{1}^{b}$ denote the classes $\{1,a,a^{2},...,a^{t-1}\}$
and $\{1,b,b^{2},...,b^{s-1}\}$. By hypothesis
$\left\langle b\right\rangle\leq\left\langle a\right\rangle$, then $\mathsf{C}_{1}^{b}\subseteq\mathsf{C}_{1}^{a}$. Hence there exists $y_{i}$ in $\left\langle a\right\rangle$ such that
$$\mathsf{C}_{1}^{a}=\mathsf{C}_{1}^{b}\cup y_{1}\mathsf{C}_{1}^{b}\cup\cdots\cup y_{k}\mathsf{C}_{1}^{b},$$
and $k=[\left\langle a\right\rangle:\left\langle b\right\rangle]$. Then is follows that
$$\mathsf{C}_{s}^{a}=\mathsf{C}_{s}^{b}\cup y_{1}\mathsf{C}_{s}^{b}\cup\cdots\cup y_{k}\mathsf{C}_{s}^{b}.$$
Therefore $\vert\X_{\mathbb{I}_{n}(a)}\vert\leq\vert\X_{\mathbb{I}_{n}(b)}\vert$ and
$\mathbb{I}_{n}(a)\leq\mathbb{I}_{n}(b)$.
\end{proof}

We finish this section constructing some $\delta_{a}$-invariants $S$-subgroups

\begin{proposition}\label{prop_menos_uno_inv}
$_{}$
\begin{enumerate}
\item $\mathbb{I}_{2n+1}(2n)=\{X,\ \mathsf{C}_{q}X:\ q\in\{1,2,...,n\}\}^{*}$, $\mathsf{C}_{q}=\{q,2n+1-q\}$
\item $\mathbb{I}_{2n}(2n-1)=\{X,\ C^{n}X,\ \mathsf{C}_{q}X:\ q\in\{1,2,...,n-1\}\}^{*}$, $\mathsf{C}_{q}=\{q,2n-q\}$.
\end{enumerate}
\end{proposition}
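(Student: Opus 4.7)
The plan is to reduce both items to the cyclotomic-coset description of $\mathbb{I}_{N}(a)$ given in~(\ref{alpha_invariante}). In both parts the unit $a$ is congruent to $-1$ modulo the relevant $N$ (with $N=2n+1$ in~(1) and $N=2n$ in~(2)), so $a^{2}\equiv 1 \pmod{N}$ and every cyclotomic coset has the shape
\[
\mathsf{C}_{s}=\{s,\,sa\}=\{s,\,-s\}=\{s,\,N-s\},
\]
i.e.\ it has at most two elements. A coset degenerates to a singleton precisely when $2s\equiv 0\pmod{N}$, so the only real distinction between the two parts comes from the parity of $N$.

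For part~(1), with $N=2n+1$ odd, the congruence $2s\equiv 0\pmod{2n+1}$ forces $s=0$, so $\{0\}$ is the unique singleton coset. It contributes the atomic codeword $X$, which is fixed by $\delta_{a}$ because its single $-$ sits at position $0$. The remaining $2n$ nonzero residues pair up as $\{q,\,2n+1-q\}$, with representatives $q\in\{1,\ldots,n\}$, so (\ref{alpha_invariante}) gives the $\Delta_{N}$-code
\[
\mathcal{X}_{\mathbb{I}(2n)}=\{X,\;\mathsf{C}_{q}X : q=1,\ldots,n\},
\]
whose free closure is precisely $\mathbb{I}_{2n+1}(2n)$.

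For part~(2), with $N=2n$ even, $2s\equiv 0\pmod{2n}$ admits the two solutions $s=0$ and $s=n$, so there are now two singleton cosets. These contribute two atomic letters: $X$ (fixed as before) and $C^{n}X$, whose $\delta_{a}$-invariance comes from $-n\equiv n\pmod{2n}$. The other $2n-2$ residues split into the $n-1$ pairs $\{q,\,2n-q\}$ for $q\in\{1,\ldots,n-1\}$, so another application of~(\ref{alpha_invariante}) produces the code $\{X,\,C^{n}X,\,\mathsf{C}_{q}X:q=1,\ldots,n-1\}$ and hence the claimed presentation of $\mathbb{I}_{2n}(2n-1)$. There is no serious obstacle here; the only bookkeeping point is confirming that the singleton cyclotomic cosets really do correspond to atomic $\delta_{a}$-invariant codewords rather than trivial or non-invariant ones, which the two identities $\delta_{-1}X=X$ and $\delta_{-1}C^{n}X=C^{n}X$ settle directly.
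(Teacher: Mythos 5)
Your proof is correct and follows essentially the same route as the paper: both reduce the claim to the cyclotomic-coset description (\ref{alpha_invariante}), using that $2n\equiv-1$ (resp.\ $2n-1\equiv-1$) modulo $N$ so that every coset is $\{s,N-s\}$, with singletons exactly at $s=0$ (and $s=n$ when $N=2n$). The only difference is cosmetic: the paper verifies $(2n)^2\equiv1$ and $2nq\equiv 2n+1-q$ by direct expansion and dismisses part~(2) as analogous, whereas you work from $a\equiv-1$ and carry out the even case explicitly.
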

\begin{proof}
We proof $1$. The proof of $2$ it is analogous. We note that
$$(2n)^{2}=(2n+1-1)^{2}=(2n+1)^{2}-2(2n+1)+1\equiv1\mod(2n+1),$$
then $\mathsf{C}_{1}=\{1,2n\}$. As $2n+1\nmid2n-1$ and $q<2n+1$, then $2nq\not\equiv q\mod(2n+1)$
and the $\mathsf{C}_{q}=\{q,2nq\}$ are cyclotomic cosets of $2n$ module $2n+1$. Finally, it is easy 
to note that $2nq$ is congruent to $2n+1-q$ module $2n+1$, 
$$2nq-2n-1+q=(2n+1)(q-1)\equiv0\mod(2n+1).$$
\end{proof}

\begin{proposition}
Let $2p+1$ be an prime number with $p$ an odd prime number. The $S$-subgroups invariants in
$\Z_{2}^{2p+1}$ are $\mathbb{I}_{2p+1}(x)$, $\mathbb{I}_{2p+1}(y)$ and $\mathbb{I}_{2p+1}(2p)$,
where $x$ is a primitive root module $2p+1$ and $y$ is not neither primitive root module $2p+1$
nor $2p$.
\end{proposition}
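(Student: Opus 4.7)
The plan is to reduce the classification of $\delta_a$-invariant $S$-subgroups to a classification of the subgroups of the unit group $\Z_{2p+1}^{*}$, and then use the hypothesis that $2p+1$ is prime with $p$ an odd prime to enumerate those subgroups explicitly.

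First I would establish the key observation that $\mathbb{I}_{n}(a)$ depends only on the cyclic subgroup $\langle a\rangle\leq\Z_{n}^{*}$, not on the particular generator. Indeed, if $\delta_{a}Y=Y$ then applying $\delta_{a}$ repeatedly gives $\delta_{a^{k}}Y=Y$ for every $k$, so $Y\in\mathbb{I}_{n}(a^{k})$; conversely, any element $b$ with $\langle b\rangle=\langle a\rangle$ is some power of $a$, so $\delta_{b}Y=Y$ holds whenever $\delta_{a}Y=Y$. Consequently the map $a\mapsto\mathbb{I}_{2p+1}(a)$ factors through the lattice of subgroups of $\Z_{2p+1}^{*}$, and different subgroups yield distinct invariant $S$-subgroups (one can exhibit an explicit $Y$ invariant under one but not the other, using the cyclotomic-coset description in \eqref{alpha_invariante}).

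Next I would use number theory to enumerate the subgroups of $\Z_{2p+1}^{*}$. Since $2p+1$ is prime, $\Z_{2p+1}^{*}$ is cyclic of order $\phi(2p+1)=2p$. Because $p$ is an odd prime, the positive divisors of $2p$ are exactly $1,2,p,2p$, and so the cyclic group $\Z_{2p+1}^{*}\cong\Z_{2p}$ has exactly four subgroups, one of each of those orders. The subgroup of order $1$ corresponds to $a=1$, giving the trivial invariant subgroup $\mathbb{I}_{2p+1}(1)=\Z_{2}^{2p+1}$ (which is excluded from the list as it imposes no nontrivial invariance). The remaining three subgroups correspond to the three nontrivial $S$-subgroups claimed.

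Finally I would match generators with the descriptions in the statement. The unique order-$2$ subgroup is $\{1,-1\}=\{1,2p\}$, so it is generated by $2p$, giving $\mathbb{I}_{2p+1}(2p)$. The unique subgroup of order $2p$ is all of $\Z_{2p+1}^{*}$, generated precisely by a primitive root $x$ modulo $2p+1$, giving $\mathbb{I}_{2p+1}(x)$. The unique subgroup of order $p$ is generated by any element $y$ of multiplicative order $p$; such $y$ is not a primitive root (its order is $p\neq 2p$) and is not equal to $2p$ (whose order is $2$, and $p\neq 2$ since $p$ is odd). Together these three cases exhaust all non-identity choices of $\langle a\rangle$, completing the classification. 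The only real subtlety is the initial reduction $\mathbb{I}_{n}(a)=\mathbb{I}_{n}(b)$ whenever $\langle a\rangle=\langle b\rangle$ and the complementary statement that different cyclic subgroups yield different $S$-subgroups; after that the argument is a direct divisor count.
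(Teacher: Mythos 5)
Your proposal is correct and follows essentially the same route as the paper: both arguments reduce the classification to the subgroup lattice of the cyclic group $\Z_{2p+1}^{*}$ of order $2p$, use that the divisors of $2p$ are $1,2,p,2p$ to get exactly one subgroup of each order, and identify the order-$2$ subgroup with $\langle 2p\rangle=\{1,-1\}$ and the order-$p$ subgroup with $\langle y\rangle$ for $y$ neither a primitive root nor $2p$. Your write-up is somewhat more explicit than the paper's (in particular the reduction $\langle a\rangle=\langle b\rangle\Rightarrow\mathbb{I}_{n}(a)=\mathbb{I}_{n}(b)$, which the paper leaves to its earlier theorem on $\langle b\rangle\leq\langle a\rangle$), but the underlying argument is the same.
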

\begin{proof}
Let $P=\{x_{1},x_{2},...,x_{t}\}$ denote the set of primitive roots module $2p+1$. Then
$$\mathbb{I}_{2p+1}(x_{1})=\mathbb{I}_{2p+1}(x_{2})=\cdots=\mathbb{I}_{2p+1}(x_{t}).$$
As $\vert\left\langle x_{i}\right\rangle\vert=2p$ for any $x_{i}\in P$, then 
$\left\langle x_{i}\right\rangle$ has exactly a subgroup of order $2$ and a subgroup of order $p$.
Therefore there exist $S$-subgroups invariants $\mathbb{I}_{2p+1}(y)$ and $\mathbb{I}_{2p+1}(2p)$ 
where $y\in P^{c}\setminus\{2p\}$, with $P^{c}$ the complement of $P$ in $\Z_{2p+1}^{*}$.
\end{proof}

\section{Schur ring $\mathfrak{S}(\Z_{2}^{n},H_{n})$}

We note by $RY$ the reversed sequence $RY = (y_{n-1},...,y_{1},y_{0})$ and let $H_{n}$ denote the
permutation automorphic subgroup $H_{n}=\{1,R\}\leq S_{n}\leq Aut(Z_{2}^{n})$. Hence $H_{n}$ defines
a partition on $\Z_{2}^{n}$ and $\mathfrak{S}(\Z_{2}^{n},H_{n})$ is a schur ring.

\begin{definition}
Let $Y\in \Z_{2}^{n}$. We shall call $Y$ symmetric if $RY=Y$ and otherwise we say it is 
non symmetric. We make $Sym(\Z_{2}^{n})$ the set of all $Y$ symmetric and $\widehat{Sym}(\Z_{2}^{n})$
the set of all $Y$ nonsymmetric.
\end{definition}

Take $Y\in\Z_{2}^{n}$ such that $Y$ is of the form $C^{i_{1}}XC^{i_{2}}X\cdots C^{i_{r}}X$. We want
to understand the structure of the words in $Sym(\Z_{2}^{n})$. As it must be fulfilled that $RY=Y$,
then taking $X=+\cdots+-+\cdots+$ in $\G_{n}(n-1)$ with $n$ an odd number we have

\begin{eqnarray*}
RY&=&RC^{i_{1}}XRC^{i_{2}}X\cdots RC^{i_{r}}X\\
&=&C^{-i_{1}}RXC^{-i_{2}}RX\cdots C^{-i_{r}}RX\\
&=&C^{n-i_{1}}XC^{n-i_{2}}X\cdots C^{n-i_{r}}X
\end{eqnarray*}

where we have used that $RX=X$. Hence if $Y$ is symmetric, then must be $n-i_{j}=i_{k}$ for $j\neq k$
ranging in $[1,r]$. Thereby $Y$ has the form 

\begin{equation}
Y_{0}^{\epsilon_{0}}Y_{1}^{\epsilon_{1}}\cdots Y_{(n-1)/2}^{\epsilon_{(n-1)/2}}
\end{equation}
for $\epsilon_{i}=0,1$ and $Y_{0}=X$ and $Y_{i}=C^{i}XC^{n-i}X$. As $C^{i}XC^{n-i}X$ is in
$Sym(\Z_{2}^{n})$ for all $i$, then
\begin{equation}
\mathcal{X}_{Sym^{O}}=\{X,\ CXC^{n-1}X,\ C^{2}XC^{n-2}X,...,\ C^{(n-1)/2}XC^{(n+1)/2}X\}
\end{equation}
is a $H_{n}$-code for $Sym(\Z_{2}^{n})$. For the case $n$ an even number it is easily followed that
\begin{equation}
\mathcal{X}_{Sym^{E}}=\{XC^{n-1}X,\ CXC^{n-2}X,\ C^{2}XC^{n-3}X,...,\ C^{(n-2)/2}XC^{n/2}X\}
\end{equation}
is a $H_{n}$-code for $Sym(\Z_{2}^{n})$, where $X=+++\cdots++-$. Also it is clear that 
$\mathcal{X}_{Sym^{E}}$ and $\mathcal{X}_{Sym^{O}}$ are $P(T)$-codes, therefore 
$Sym(\Z_{2}^{n})$ is an free $S$-subgroup in $\mathfrak{S}(\Z_{2}^{n},H_{n})$ and 
$Sym(\Z_{2}^{2n+1})$ and $Sym(\Z_{2}^{2n})$ have order $2^{n+1}$ and $2^{n}$, respectively.

Finally, the relationship between the symmetric subgroup $Sym(\Z_{2}^{^{2n+1}})$ and the 
$\delta_{2n}$-invariant $S$-subgroup $\mathbb{I}_{2n+1}(2n)$ is shown

\begin{theorem}
$Sym(\Z_{2}^{^{2n+1}})=\mathbb{I}_{2n+1}(2n)$.
\end{theorem}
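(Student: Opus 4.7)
The plan is to establish the equality by comparing the explicit generating codes for the two subgroups and invoking the commutation relations satisfied by $C$, $R$, and $\delta_{2n}$.

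I would begin by writing down the generators explicitly. By Proposition \ref{prop_menos_uno_inv}(1), the subgroup $\mathbb{I}_{2n+1}(2n)$ is free of rank $n+1$, generated by $X=-++\cdots+$ together with the paired words $\mathsf{C}_q X = C^q X\cdot C^{2n+1-q}X$ for $q=1,\dots,n$. From the analysis of the odd case in Section 7, $Sym(\Z_2^{2n+1})$ is also free of rank $n+1$, generated by a palindromic base $X'=+\cdots+-+\cdots+$ together with the analogous palindromic pairs $C^q X'\cdot C^{2n+1-q}X'$. In particular both subgroups have the same order $2^{n+1}$, so it suffices to establish one inclusion.

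The key step is to show $Sym(\Z_2^{2n+1})\subseteq\mathbb{I}_{2n+1}(2n)$ by verifying that each generator of $\X_{Sym^O}$ is fixed by $\delta_{2n}$. Applying $\delta_{2n}$ to a pair $C^q X'\cdot C^{2n+1-q}X'$ and invoking the commutation $\delta_{2n}C^i = C^{-i}\delta_{2n}$ (derived from $C^i\delta_a = \delta_a C^{ia}$ with $a\equiv-1\pmod{2n+1}$), this reduces to (i) invariance of the base element and (ii) closure of the index pair $\{q,2n+1-q\}$ under $q\mapsto -q$ modulo $2n+1$, the latter being immediate. Equality of orders then forces the full equality of subgroups, and a strictly parallel argument using $RC^i = C^{-i}R$ would give the reverse inclusion directly if desired.

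The main obstacle is the base-element invariance in step (i): $\delta_{2n}$ fixes only the sequence in $\G_{2n+1}(2n)$ whose unique minus sits at position $0$, whereas $R$ fixes only the palindromic $X'$ whose minus sits at the middle position $n$. These two bases are \emph{different} elements of $\G_{2n+1}(2n)$, so a literal identification of the two generating codes is not immediate, and one must reconcile them via a suitable cyclic shift — conjecturally $C^n$, the unique shift carrying the fixed position of $R$ to the fixed position of $\delta_{2n}$. Verifying that this shift respects the paired structure of the generators and that the resulting subgroups in fact coincide as subsets of $\Z_2^{2n+1}$ (rather than merely being conjugate under $C_{2n+1}$) is where I expect the proof to be most delicate; a clean way to carry it out is to reparametrise generators intrinsically by their multiset of minus-positions and then directly compare the combinatorial conditions imposed by the two involutions.
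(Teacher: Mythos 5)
You have put your finger on exactly the right difficulty, and unfortunately the obstacle you flag in your last paragraph is not a technicality that a more careful argument can remove: under the paper's definitions the inclusion $Sym(\Z_{2}^{2n+1})\subseteq\mathbb{I}_{2n+1}(2n)$ on which your plan rests is false, because the two subgroups are genuinely distinct. Concretely, writing $S$ for the set of minus-positions of a word $Y$, one has $Y\in\mathbb{I}_{2n+1}(2n)$ iff $-S=S\pmod{2n+1}$ (symmetry about position $0$, the unique position fixed by $\delta_{2n}$), whereas $Y\in Sym(\Z_{2}^{2n+1})$ iff $2n-S=S$, i.e.\ $-S-1=S$ (symmetry about position $n$, the unique position fixed by $R$). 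Already for $n=1$ this gives $\mathbb{I}_{3}(2)=\{+++,\,-++,\,+--,\,---\}$ and $Sym(\Z_{2}^{3})=\{+++,\,+-+,\,-+-,\,---\}$, two different subgroups of $\Z_{2}^{3}$. What survives is precisely the weaker statement you anticipated: the shift $C^{n+1}$ (the unique power of $C$ carrying the fixed position of $\delta_{2n}$ to that of $R$, since $2(n+1)\equiv1\pmod{2n+1}$) maps $\mathbb{I}_{2n+1}(2n)$ onto $Sym(\Z_{2}^{2n+1})$; the two are therefore images of one another under $C_{2n+1}$ and are isomorphic free subgroups of rank $n+1$, but they do not coincide as subsets of $\Z_{2}^{2n+1}$, so ``equality of orders'' has no inclusion to upgrade.

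For comparison, the paper's own proof commits exactly the identification you declined to make: it asserts that the codewords of $\X_{\mathbb{I}_{2n+1}(2n)}$ are $X$ and $C^{q}XC^{2n+1-q}X$ \emph{with} $X=+\cdots+-+\cdots+$, i.e.\ with the palindromic base, and then checks that these are fixed by $R$. But by the construction in Section 6 and Proposition \ref{prop_menos_uno_inv}, the base of $\X_{\mathbb{I}_{2n+1}(2n)}$ must be the $\delta_{2n}$-fixed word $-++\cdots+$; with the palindromic base the listed generators are not $\delta_{2n}$-invariant and generate $Sym(\Z_{2}^{2n+1})$ itself, so the computation only verifies that the generators of $Sym$ are symmetric. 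Your write-up, by isolating the mismatch of base elements and asking whether the reconciling shift can be absorbed, identifies the actual flaw; the correct conclusion is $Sym(\Z_{2}^{2n+1})=C^{n+1}\,\mathbb{I}_{2n+1}(2n)$, not equality, and the theorem should be restated accordingly.
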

\begin{proof}
By proposition \ref{prop_menos_uno_inv} the codewords in $\X_{\mathbb{I}_{2n+1}(2n)}$ are $X$ 
and $C^{q}XC^{2n+1-q}X$, $1\leq q\leq n$, with $X=+\cdots+-+\cdots+$. We want to show that all of
codewords in $\X_{\mathbb{I}_{2n+1}(2n)}$ is symmetric. For this we note that $RX=X$ and
\begin{eqnarray*}
R(C^{q}XC^{2n+1-q}X)&=&RC^{q}XRC^{2n+1-q}X\\
&=&C^{2n+1-q}RXC^{q}RX\\
&=&C^{2n+1-q}XC^{q}X.
\end{eqnarray*}
\end{proof}

\section{Schur ring $\mathfrak{S}(\Z_{2}^{n},H_{n}C_{n})$}

In this section we will use the commutation relation
\begin{equation}\label{comm_RC}
RC^{i}=C^{n-i}R
\end{equation}
to show that the $S$-subgroups $\mathbb{G}_{d}(n)$ and $Sym(\Z_{2}^{n})$ are $S$-subgroups in
$\mathfrak{S}(\Z_{2}^{n},H_{n}C_{n})$.

\begin{theorem}\label{theo_G_HC}
$\mathbb{G}_{dC}(n)$ is an $S$-subgroup in $\mathfrak{S}(\Z_{2}^{n},H_{n}C_{n})$
\end{theorem}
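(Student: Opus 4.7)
The plan is to apply Theorem \ref{theo_basic_S_subgroup} with the permutation automorphic subgroup $H_{n}C_{n}$ acting on the $C_{n}$-code $\X_{F,d}$. That theorem guarantees $\X^{*}$ is an $S$-subgroup of $\mathfrak{S}(\Z_{2}^{n},G)$ whenever $G$ permutes $\X$, so it suffices to verify that $H_{n}C_{n}$ sends $\X_{F,d}$ into itself. The $C_{n}$-invariance $C^{i}(A_{j,d}X)=A_{(i+j)\bmod d,d}X$ was already recorded in Section 5, so the only new content is the $R$-invariance of $\X_{F,d}$.

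For this I would fix the base vector $X=-++\cdots+$ and note that $RX=CX$, since both equal $(+,+,\ldots,+,-)$. Using the commutation relation (\ref{comm_RC}) termwise in the abelian, componentwise product on $\Z_{2}^{n}$,
\begin{equation*}
R(A_{i,d}X)=\prod_{l=0}^{k}R\bigl(C^{i+ld}X\bigr)=\prod_{l=0}^{k}C^{\,n-i-ld}(RX)=\prod_{l=0}^{k}C^{\,n-i-ld+1}X.
\end{equation*}
Since $\Z_{2}^{n}$ is abelian the factors may be permuted freely, and using $kd=n-d$ the exponents $\{\,n-i-ld+1\bmod n:0\leq l\leq k\,\}$ are exactly the $n/d$ residues in $\{0,1,\ldots,n-1\}$ congruent to $1-i$ modulo $d$, namely $\{i',i'+d,\ldots,i'+kd\}$ with $i'\equiv 1-i\pmod{d}$, $0\leq i'<d$. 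Hence $R(A_{i,d}X)=A_{(1-i)\bmod d,\,d}X\in\X_{F,d}$.

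Combining with the $C_{n}$-invariance, $H_{n}C_{n}$ acts on $\X_{F,d}$, and Theorem \ref{theo_basic_S_subgroup} yields that $\mathbb{G}_{d}(n)=\X_{F,d}^{*}$ is an $S$-subgroup of $\mathfrak{S}(\Z_{2}^{n},H_{n}C_{n})$, which is the content of the theorem when one passes to the induced $H_{n}C_{n}$-partition. No substantive obstacle should arise beyond the index bookkeeping: the exponents live in $\Z/n$ while the codeword label lives in $\Z/d$, and one must check that the reversed arithmetic progression does cover the full residue class modulo $d$ inside $\{0,\ldots,n-1\}$. It is also reassuring that $(1-(1-i))\bmod d = i$, so the map $i\mapsto(1-i)\bmod d$ is an involution on $\{0,\ldots,d-1\}$, consistent with $R^{2}=\mathrm{id}$.
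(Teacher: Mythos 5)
Your proposal is correct and follows essentially the same route as the paper: apply $R$ termwise to $A_{i,d}X$, use the commutation relation (\ref{comm_RC}) together with $RX=CX$, and identify the reversed progression as $A_{(1-i)\bmod d,\,d}X$ (the paper writes this as $CA_{d-i,d}X=A_{d-i+1,d}X$), then invoke the $C_{n}$-invariance and Theorem \ref{theo_basic_S_subgroup}. Your explicit handling of the index arithmetic modulo $d$ versus modulo $n$ is a slightly more careful bookkeeping of the same computation.
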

\begin{proof}
By having in mind the commutation relation (\ref{comm_RC}), we can to show that 
$\mathcal{X}_{F,d}$ is an $H_{n}$-code. In this way we have
\begin{eqnarray*}
RA_{i,d}X&=&R(C^{i}XC^{i+d}X\cdots C^{i+n-d}X)\\
&=&RC^{i}XRC^{i+d}X\cdots RC^{i+n-d}X\\
&=&C^{n-i}RXC^{n-i-d}RX\cdots C^{d-i}RX
\end{eqnarray*}
As $X=-++\cdots++$, then $RX=CX$. Hence
\begin{eqnarray*}
RA_{i,d}X&=&C^{n-i}CXC^{n-i-d}CX\cdots C^{d-i}CX\\
&=&C(C^{n-i}XC^{n-i-d}X\cdots C^{d-i}X)
\end{eqnarray*}
Finally, reordering and rewriting 
\begin{eqnarray*}
RA_{i,d}X&=&C(C^{d-i}X\cdots C^{d-i+(n-2d)}XC^{d-i+(n-d)}X)\\
&=&CA_{d-i,d}X\\
&=&A_{d-i+1,d}X.
\end{eqnarray*}
\end{proof}

\begin{definition}
A $S$-set $Y_{C}$ in $\Z_{2C}^{n}$ is symmetric if $R\cdot Y_{C}=Y_{C}$, where $R\cdot Y_{C}$ means 
the action of $R$ on the elements of $Y_{C}$. The set of all symmetric $S$-sets will be denoted by
$Sym(\Z_{2C}^{n})$ and the set of all non-symmetric $S$-sets will be denoted by
$\widehat{Sym}(\Z_{2C}^{n})$.
\end{definition}

It is easy to note that $R\cdot Y_{C}=(RY)_{C}$. Therefore $Y_{C}$ is a symmetric $S$-set if and 
only if contains some $C^{i}Y$ symmetric.

\begin{theorem}
$Sym(\Z_{2C}^{n})$ is an $S$-subgroup of $\mathfrak{S}(\Z_{2}^{n},H_{n}C_{n})$.
\end{theorem}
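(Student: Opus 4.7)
The plan is to verify the two requirements for $Sym(\Z_{2C}^{n})$ to be an $S$-subgroup of $\mathfrak{S}(\Z_{2}^{n},H_{n}C_{n})$: that it is a union of $H_{n}C_{n}$-orbits, and that it is closed under the group operation of $\Z_{2}^{n}$.

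The first requirement follows almost directly from the definitions. By construction $Sym(\Z_{2C}^{n})$ is a union of $C_{n}$-orbits, and each constituent orbit satisfies $R\cdot Y_{C}=Y_{C}$. Hence the whole set is preserved by $C$ and by $R$, and therefore by $H_{n}C_{n}=\langle R,C\rangle$. In fact, every symmetric $C_{n}$-orbit already coincides with a single $H_{n}C_{n}$-orbit, so $Sym(\Z_{2C}^{n})$ is a union of basic sets of $\mathfrak{S}(\Z_{2}^{n},H_{n}C_{n})$.

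For the subgroup property I would exhibit an $H_{n}C_{n}$-code $\mathcal{X}$ with $\mathcal{X}^{*}=Sym(\Z_{2C}^{n})$ and then invoke Theorem~\ref{theo_basic_S_subgroup}. The natural candidate is the $C_{n}$-enlargement of the $H_{n}$-codes $\mathcal{X}_{Sym^{O}}$ and $\mathcal{X}_{Sym^{E}}$ from Section~8, obtained by adjoining all $C_{n}$-translates of the pointwise-symmetric codewords $C^{i}XC^{n-i}X$ (and $X$ in the odd case). The commutation relation $RC^{i}=C^{n-i}R$ from (\ref{comm_RC}), applied exactly as in the proof of Theorem~\ref{theo_G_HC}, permutes these codewords among themselves and therefore makes the collection an $H_{n}C_{n}$-code.

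The main obstacle is to check that $\mathcal{X}^{*}$ exhausts $Sym(\Z_{2C}^{n})$. One must verify both that the enlarged generating set admits unique factorizations (so that it is genuinely a code, via a count of the form $|\mathcal{X}^{*}|=2^{|\mathcal{X}|}$) and that every symmetric $C_{n}$-orbit contains a representative factorizable over these generators. For $n$ odd this is relatively direct: each symmetric orbit contains a pointwise $R$-fixed element, so the argument reduces to the factorization analysis already used for $Sym(\Z_{2}^{n})$ in Section~8. For $n$ even a symmetric orbit may miss the fixed-point set of every "even-index" reflection, so one must enlarge the code further with generators tailored to the reflections $C^{-k}R$ with $k$ odd, and reverify both factorization and coverage in that framework; this is where the bulk of the technical work lies.
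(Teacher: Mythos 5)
Your first step is fine and is all the paper needs for the ``$S$-set'' half: every symmetric $C_{n}$-orbit is a single $H_{n}C_{n}$-orbit, so $Sym(\Z_{2C}^{n})$ is a union of basic sets of $\mathfrak{S}(\Z_{2}^{n},H_{n}C_{n})$. For the subgroup half, however, your proposal is only a plan: the two decisive steps --- that the enlarged generating set is genuinely a code, and that its span is exactly $Sym(\Z_{2C}^{n})$ --- are precisely the ones you defer as ``the bulk of the technical work,'' so nothing is actually proved. Note also that this is not the paper's route at all: the paper constructs no code, but argues directly that $R(YC^{k}Z)_{C}=(RY\,RC^{k}Z)_{C}=(YC^{n-k}Z)_{C}$ for symmetric representatives $Y,Z$, and concludes that the product $Y_{C}Z_{C}$ is $R$-stable.

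The real problem is that the coverage check you postpone cannot be completed, because $Sym(\Z_{2C}^{n})$ is in general not closed under the product. Take $n=7$, $Y=--+++++$ and $Z=+++-+++$. Both lie in symmetric orbits: $RZ=Z$, and $RY$ has its minus signs at positions $\{5,6\}$, a cyclic shift of $\{0,1\}$. But $YZ=--+-+++$ has minus positions $\{0,1,3\}$, while $R(YZ)$ has minus positions $\{3,5,6\}$, which is not a translate of $\{0,1,3\}$ modulo $7$; hence $(YZ)_{C}$ is not symmetric and $YZ\notin Sym(\Z_{2C}^{7})$. (Consistently, $Sym(\Z_{2C}^{7})$ has $100$ elements, not a power of $2$, so it cannot be a subgroup, and no code can generate it.) The same defect is hidden in the paper's own argument: showing that $R$ permutes the orbits $(YC^{k}Z)_{C}\mapsto(YC^{n-k}Z)_{C}$ only proves that the \emph{union} $Y_{C}Z_{C}$ is $R$-stable, not that each individual orbit occurring in that union is symmetric, and the latter is what closure under the group operation requires; the two differ exactly when $(YC^{k}Z)_{C}\neq(YC^{n-k}Z)_{C}$, as in the example above. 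So your instinct that there is a genuine obstruction beyond the even/odd case distinction is correct, but the cure is not a larger code: the statement itself needs an additional hypothesis or a reformulation before any proof strategy can succeed.
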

\begin{proof}
Take $Y_{C},Z_{C}$ in $Sym(\Z_{2C}^{n})$ and suppose that $Y,Z\in Sym(\Z_{2}^{n})$. As 
$$R(YC^{k}Z)_{C}=(RYRC^{k}Z)_{C}=(YC^{n-k}Z)_{C},$$
then $R(Y_{C}Z_{C})=R(Y_{C})R(Z_{C})=Y_{C}Z_{C}$.
\end{proof}

\section{Schur ring $\mathfrak{S}(\Z_{2}^{n},\Delta_{n}C_{n})$}

In this section we will use the commutation relation
\begin{equation}\label{comm_DC}
\delta_{a}C^{i}=C^{ia^{-1}}\delta_{a}
\end{equation}
to show that the $S$-subgroups $\mathbb{G}_{d}(n)$ and $\mathbb{I}_{n}(a)$ are $S$-subgroups in
$\mathfrak{S}(\Z_{2}^{n},\Delta_{n}C_{n})$.

\begin{theorem}\label{theo_G_DC}
$\mathbb{G}_{dC}(n)$ is an $S$-subgroup in $\mathfrak{S}(\Z_{2}^{n},\Delta_{n}C_{n})$.
\end{theorem}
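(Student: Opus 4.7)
The plan is to mirror the structure of Theorem \ref{theo_G_HC}: by Theorem \ref{theo_basic_S_subgroup}, it suffices to show that $\X_{F,d}$ is invariant under $\Delta_n C_n$, i.e., that $\Delta_n C_n$ permutes the codewords $A_{i,d}X$, $i=0,1,\ldots,d-1$. The $C_n$-invariance $C^j A_{i,d}X = A_{i+j,d}X$ was noted when $\X_{F,d}$ was introduced as a $C_n$-code, so the task reduces to showing that $\X_{F,d}$ is also a $\Delta_n$-code.

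First I would apply the commutation relation (\ref{comm_DC}) together with the fact $\delta_a X = X$ (used in the previous section when decomposing $\Delta_n \G_n(n-1)$) to compute
\begin{equation*}
\delta_a A_{i,d} X = C^{ia^{-1}} X \cdot C^{(i+d)a^{-1}} X \cdots C^{(i+n-d)a^{-1}} X,
\end{equation*}
so the exponents of the resulting word form the set $ia^{-1} + \langle da^{-1}\rangle$ inside $\Z_n$.

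The crucial step is to recognize that $\langle da^{-1}\rangle = d\Z_n$: since $\gcd(a^{-1}, n) = 1$ and $d \mid n$, one obtains $\gcd(da^{-1}, n) = d$, so $da^{-1}$ has additive order $n/d$ in $\Z_n$ and therefore generates the unique cyclic subgroup $d\Z_n$ of that order. Consequently the exponents of $\delta_a A_{i,d}X$ are precisely the coset $j + d\Z_n$, where $j \equiv ia^{-1} \pmod d$ lies in $[0, d)$. Because $\Z_2^n$ is abelian, any reordering of the factors in a word yields the same element, so $\delta_a A_{i,d} X = A_{j,d} X \in \X_{F,d}$.

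Combined with the already established $C_n$-invariance, this shows $\X_{F,d}$ is a $\Delta_n C_n$-code, and Theorem \ref{theo_basic_S_subgroup} yields that $\mathbb{G}_{dC}(n) = \X_{F,d}^*$ is an $S$-subgroup of $\mathfrak{S}(\Z_2^n, \Delta_n C_n)$. The main obstacle is the number-theoretic identification $\langle da^{-1}\rangle = d\Z_n$: the exponents of $\delta_a A_{i,d}X$ do not form an arithmetic progression of common difference $d$, yet they span the correct coset of $d\Z_n$ so that, after reordering, they match $A_{j,d}X$.
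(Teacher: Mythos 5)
Your proposal is correct and follows essentially the same route as the paper: reduce the theorem to showing that $\X_{F,d}$ is a $\Delta_{n}$-code and compute $\delta_{a}A_{i,d}X$ using the commutation relation (\ref{comm_DC}) together with $\delta_{a}X=X$. The only divergence is in the final identification of the image codeword: the paper uses a sum-of-exponents bijection $\vartheta(A_{i,d}X)\equiv ni/d \pmod{n}$, whereas you identify the exponent set directly as the coset $ia^{-1}+d\Z_{n}$ via $\gcd(da^{-1},n)=d$ --- a more direct and arguably tighter way to conclude $\delta_{a}A_{i,d}X=A_{j,d}X$.
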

\begin{proof}
We want to show that $\X_{F,d}$ is a $\Delta_{n}$-code. Take $A_{i,d}X$ in $\X_{F,d}$. Then if
$k=\frac{n}{d}-1$ we have
\begin{eqnarray*}
\delta_{a}A_{i,d}X&=&\delta_{a}(C^{i}XC^{i+d}X\cdots C^{i+kd}X)\\
&=&\delta_{a}C^{i}X\delta_{a}C^{i+d}X\cdots\delta_{a}C^{i+kd}X)\\
&=&C^{ia^{-1}}\delta_{a}XC^{(i+d)a^{-1}}\delta_{a}X\cdots C^{(i+kd)a^{-1}}\delta_{a}X\\
&=&C^{ia^{-1}}XC^{(i+d)a^{-1}}X\cdots C^{(i+kd)a^{-1}}X.
\end{eqnarray*}
Define the map $\vartheta:\X_{F,d}\rightarrow\{ni/d:\ i=1,2,...,d-1\}$ by  
\begin{eqnarray*}
\vartheta(A_{i,d}X)&=&\vartheta(C^{i}XC^{i+d}X\cdots C^{i+kd}X)\\
&=&\sum_{j=0}^{k}(i+jd)\\
&=&i(k+1)+\frac{k(k+1)}{2}d\\
&=&\frac{ni}{d}+\left(\frac{n}{2d}-\frac{1}{2}\right)n\\
&\equiv&\frac{ni}{d}\mod n
\end{eqnarray*}
Then $\vartheta$ is a biyection. As $\vartheta(\delta_{a}A_{i,d}X)\equiv\frac{a^{-1}ni}{d}\mod n$
and $\vartheta(C^{l}\delta_{a}A_{i,d}X)\equiv(a^{-1}+l)\frac{ni}{d}\mod n$, is followed that
$\delta_{a}A_{i,d}X\in\X_{F,d}$ and therefore $\X_{F,d}$ is a $\Delta_{n}$-code.
\end{proof}

\begin{definition}
Let $a$ be a unit in $\Z_{n}^{*}$. A basic set $Y_{C}$ in $\Z_{2C}^{n}$ is
$\delta_{a}$-\textbf{invariant} if $\delta_{a}\cdot Y_{C}=Y_{C}$. Denote by $\mathbb{I}_{nC}(a)$ 
the set of these $Y$.
\end{definition}

By having in mind the basic set $Y_{C}$, we can note that $\delta_{a}\cdot Y_{C}=(\delta_{a}Y)_{C}$.
Hence $Y_{C}$ is $\delta_{a}$-invariant in $\Z_{2C}^{n}$ if and only if contains some $C^{i}Y$ 
$\delta_{a}$-invariant in $\Z_{2}^{n}$.

\begin{theorem}
$\mathbb{I}_{nC}(a)$ is an $S$-subgroup in $\mathfrak{S}(\Z_{2}^{n},\Delta_{n}C_{n})$.
\end{theorem}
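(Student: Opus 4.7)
The plan is to verify the two defining properties of an $S$-subgroup of $\mathfrak{S}(\Z_{2}^{n},\Delta_{n}C_{n})$: that $\mathbb{I}_{nC}(a)$ is closed under the action of $\Delta_{n}C_{n}$ (making it an $S$-set), and that it is closed under the group operation of $\Z_{2}^{n}$ (making it a subgroup).

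For the first property, the $C_{n}$-invariance is built into the definition of $\mathbb{I}_{nC}(a)$ as a family of $C_{n}$-orbits, so only $\Delta_{n}$-invariance needs checking. Given $Y_{C}\in\mathbb{I}_{nC}(a)$ and $\delta_{b}\in\Delta_{n}$, I would use that $\Delta_{n}\cong\Z_{n}^{*}$ is abelian: the automorphisms $\delta_{a}$ and $\delta_{b}$ commute, so $\delta_{a}\cdot(\delta_{b}\cdot Y_{C})=\delta_{b}\cdot(\delta_{a}\cdot Y_{C})=\delta_{b}\cdot Y_{C}$, placing $\delta_{b}\cdot Y_{C}$ back into $\mathbb{I}_{nC}(a)$.

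For the subgroup property, I would rely on the remark immediately preceding the theorem: a basic set $Y_{C}$ lies in $\mathbb{I}_{nC}(a)$ exactly when $Y_{C}$ contains some $\delta_{a}$-fixed representative $C^{i}Y\in\mathbb{I}_{n}(a)$. Given $Y_{C},Z_{C}\in\mathbb{I}_{nC}(a)$, I would pick such representatives $Y\in Y_{C}\cap\mathbb{I}_{n}(a)$ and $Z\in Z_{C}\cap\mathbb{I}_{n}(a)$. Since $\mathbb{I}_{n}(a)$ is already known to be a subgroup of $\Z_{2}^{n}$ and an $S$-subgroup of $\mathfrak{S}(\Z_{2}^{n},\Delta_{n})$ from the previous section, the product $YZ$ lies in $\mathbb{I}_{n}(a)$, so the basic set $(YZ)_{C}$ contains $YZ$ as a $\delta_{a}$-fixed representative and therefore belongs to $\mathbb{I}_{nC}(a)$.

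The main subtlety lies in the decomposition of the Schur-ring product $Y_{C}\cdot Z_{C}$ as a formal sum of basic sets $(YC^{k}Z)_{C}$ indexed by $k\in\Z_{n}$: I need every summand, not merely one, to lie in $\mathbb{I}_{nC}(a)$. The way to handle this is via the commutation relation (\ref{comm_DC}): for fixed $\delta_{a}$-invariant $Y,Z\in\mathbb{I}_{n}(a)$ one computes $\delta_{a}(YC^{k}Z)=Y\cdot C^{ka^{-1}}Z$, so $\delta_{a}$ acts on the family $\{(YC^{k}Z)_{C}\}_{k\in\Z_{n}}$ by the permutation $k\mapsto ka^{-1}$. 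In each orbit of this permutation one finds a canonical $\delta_{a}$-fixed representative of the form $YC^{k_{0}}Z$, confirming that every summand of $Y_{C}\cdot Z_{C}$ contains such a representative and therefore lies in $\mathbb{I}_{nC}(a)$, which completes the verification.
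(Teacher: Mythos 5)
Your strategy tracks the paper's: choose $\delta_{a}$-fixed representatives $Y,Z\in\mathbb{I}_{n}(a)$ and use the commutation relation $\delta_{a}C^{i}=C^{ia^{-1}}\delta_{a}$ to get $\delta_{a}(YC^{k}Z)_{C}=(YC^{ka^{-1}}Z)_{C}$. You also correctly isolate the crux that the paper passes over: the paper's proof only concludes $\delta_{a}(Y_{C}Z_{C})=Y_{C}Z_{C}$, i.e.\ that $\delta_{a}$ \emph{permutes} the summands $(YC^{k}Z)_{C}$ among themselves, whereas membership of the product in $\mathbb{I}_{nC}(a)$ requires each individual summand to be $\delta_{a}$-invariant. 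But your resolution of that point is wrong. You assert that every orbit of the permutation $k\mapsto ka^{-1}$ of $\Z_{n}$ contains a $\delta_{a}$-fixed index $k_{0}$; a fixed point of a permutation is by definition an orbit of length one, so an orbit of length greater than one contains no such $k_{0}$, and the argument collapses exactly at the step it was introduced to justify.

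The gap cannot be patched, because the intermediate claim is false. Take $n=7$, $a=2$ (so $a^{-1}=4$) and $Y=Z=X=-++++++$, which is $\delta_{2}$-fixed, so $X_{C}=\G_{7}(6)\subseteq\mathbb{I}_{7C}(2)$. The summand $(XCX)_{C}$ of $X_{C}X_{C}$ is the cyclic orbit of the sequence whose two minus signs are adjacent, while $\delta_{2}(XCX)_{C}=(XC^{4}X)_{C}$ is the orbit of the sequence whose minus signs sit at cyclic distance $3$; these are distinct basic sets, so $(XCX)_{C}$ is not $\delta_{2}$-invariant and hence $XCX\notin\mathbb{I}_{7C}(2)$, even though $X$ and $CX$ both lie in $\mathbb{I}_{7C}(2)$. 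Thus the union of the $\delta_{a}$-invariant basic sets is not multiplicatively closed, and only the weaker statement actually established in the paper --- that the product of two $\delta_{a}$-invariant basic sets is a $\delta_{a}$-invariant $S$-\emph{set} --- survives. Your first paragraph (closure under all of $\Delta_{n}$ via commutativity of $\Z_{n}^{*}$) is correct and supplies a point the paper leaves implicit; the difficulty is entirely in the subgroup property, and your instinct that it is the real issue was sound even though the proposed fix fails.
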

\begin{proof}
Take $Y_{C},Z_{C}$ in $\mathbb{I}_{nC}(a)$ and suppose that $Y,Z\in\mathbb{I}_{n}(a)$. As 
$\delta_{a}(YC^{k}Z)_{C}=(\delta_{a}Y\delta_{a}C^{k}Z)_{C}=(YC^{ka^{-1}}Z)_{C}$,
then $\delta_{a}(Y_{C}Z_{C})=\delta_{a}(Y_{C})\delta_{a}(Z_{C})=Y_{C}Z_{C}$.
\end{proof}

\section{Schur ring $\mathfrak{S}(\Z_{2}^{n},H_{n}\Delta_{n}C_{n})$}

Finally we show that the $S$-subgroups $\mathbb{G}_{d}(n)$, $Sym(\Z_{2}^{n})$ and 
$\mathbb{I}_{n}(a)$ are $S$-subgroups in $\mathfrak{S}(\Z_{2}^{n},H_{n}\Delta_{n}C_{n})$.
Let $Y_{C}$ be any basic set in $\Z_{2C}^{n}$. It is a very easy to notice that
$Y_{C}=(C^{k}Y)_{C}$ for all $k$. We will use this fact for to prove the following lemma

\begin{lemma}
$_{}$
\begin{enumerate}
\item $\delta_{a}Y_{iC}=Y_{a^{-1}iC}$ for $Y_{i}=C^{i}XC^{2n+1-i}X\in\X_{Sym^{O}}$.
\item $\delta_{a}Y_{iC}=Y_{\left(a^{-1}i+\frac{a^{-1}-1}{2}\right)C}$ for 
$Y_{i}=C^{i}XC^{2n-1-i}X\in\X_{Sym^{E}}$.
\end{enumerate}
\end{lemma}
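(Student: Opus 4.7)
The plan is to reduce the problem, in each case, to an explicit identification of $\delta_{a}X$ as a cyclic shift of $X$, then push $\delta_{a}$ through the factorization of $Y_{i}$ via the commutation relation $\delta_{a}C^{i}=C^{ia^{-1}}\delta_{a}$ from (\ref{comm_DC}), and finally absorb any common power of $C$ into the $C_{n}$-orbit by means of $(C^{k}Z)_{C}=Z_{C}$.

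First I would pin down $\delta_{a}X$ in each case. Since $\delta_{a}$ preserves Hamming weight, $\delta_{a}X\in\G_{n}(n-1)=X_{C}$, so $\delta_{a}X=C^{k}X$ for a unique $k$ determined by locating the single $-$ entry. In the odd case (length $N=2n+1$) the codeword $X=+\cdots+-+\cdots+$ has its $-$ at position $n$; the sequence $\delta_{a}X$ carries its $-$ at position $a^{-1}n\pmod{N}$, whereas $C^{k}X$ has its $-$ at position $n-k\pmod{N}$. Equating yields $k=n(1-a^{-1})\pmod{N}$. In the even case (length $N=2n$) the codeword $X=+\cdots+-$ has its $-$ at position $2n-1$, and the same bookkeeping gives $k=a^{-1}-1\pmod{2n}$. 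Here $a\in\Z_{2n}^{*}$ is necessarily odd, so $(a^{-1}-1)/2$ is an honest integer, which is precisely the half-shift that appears in the statement of part 2.

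Next I would compute $\delta_{a}Y_{i}$ in each case. For part 1, applying (\ref{comm_DC}) to $Y_{i}=C^{i}X\cdot C^{2n+1-i}X$ and using $(2n+1-i)a^{-1}\equiv-ia^{-1}\pmod{N}$ together with $\delta_{a}X=C^{n(1-a^{-1})}X$ yields
\begin{equation*}
\delta_{a}Y_{i}=C^{ia^{-1}+k}X\cdot C^{-ia^{-1}+k}X,
\end{equation*}
and stripping the common factor $C^{k}$ by passing to the $C_{n}$-orbit gives $(\delta_{a}Y_{i})_{C}=(C^{ia^{-1}}X\cdot C^{-ia^{-1}}X)_{C}=Y_{a^{-1}i,C}$. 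For part 2 the analogous computation with $\delta_{a}X=C^{a^{-1}-1}X$ and $(2n-1-i)a^{-1}\equiv-(i+1)a^{-1}\pmod{2n}$ produces
\begin{equation*}
\delta_{a}Y_{i}=C^{(i+1)a^{-1}-1}X\cdot C^{-ia^{-1}-1}X.
\end{equation*}
Setting $m=(a^{-1}-1)/2$ and $j=a^{-1}i+m$, I would then verify by matching both exponents modulo $2n$ (equivalently, by checking that the sum of the two exponents in $\delta_{a}Y_{i}$ differs from $2n-1$ by $2m$) that $\delta_{a}Y_{i}=C^{m}Y_{j}$, whence $(\delta_{a}Y_{i})_{C}=Y_{jC}$ as claimed.

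The main obstacle is the computation in the first step: one must resist the temptation to assume $\delta_{a}X=X$ (which only holds for $a=1$). The apparent asymmetry between the two parts is then transparent: the common shift $k$ acts identically on both factors of $Y_{i}$, and in the odd case this entire shift is absorbed by the orbit, whereas in the even case the parametrization $Y_{i}=C^{i}XC^{2n-1-i}X$ is not centered, so absorbing the shift forces the index to jump by the extra summand $(a^{-1}-1)/2$.
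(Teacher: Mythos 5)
Your proposal is correct and follows essentially the same route as the paper: push $\delta_{a}$ through the factorization via $\delta_{a}C^{i}=C^{ia^{-1}}\delta_{a}$, write $\delta_{a}X=C^{k_{a}}X$, and absorb the resulting common shift into the $C_{n}$-orbit. You are in fact slightly more careful than the paper, which leaves $k_{a}$ unspecified and silently swaps $C^{k_{a}}$ for $C^{(a^{-1}-1)/2}$ inside the orbit in part 2; your explicit values $k_{a}=n(1-a^{-1})$ and $k_{a}=a^{-1}-1$, and the observation that $a$ odd makes $(a^{-1}-1)/2$ an integer, fill in those details.
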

\begin{proof}
$1.$ Take $\delta_{a}$ in the group $\Delta_{n}$. It is clear that $\delta_{a}X=C^{k_{a}}X$ for 
some $k_{a}$ depending on $a$, where  $X=+\cdots+-+\cdots+$ is the word in $\X_{C}$ used to 
construct all codewords in $\mathcal{X}_{Sym^{O}}$. Then
\begin{eqnarray*}
\delta_{a}Y_{iC}&=&(\delta_{a}C^{i}X\delta_{a}C^{2n+1-i}X)_{C}\\
&=&(C^{a^{-1}i}\delta_{a}XC^{a^{-1}(2n+1-i)}\delta_{a}X)_{C}\\
&=&(C^{k_{a}}(C^{a^{-1}i}XC^{2n+1-a^{-1}i}X))_{C}\\
&=&(C^{a^{-1}i}XC^{2n+1-a^{-1}i}X)_{C}\\
&=&Y_{a^{-1}iC}.
\end{eqnarray*}
$2.$ Equally, $\delta_{a}X=C^{k_{a}}X$ for some $k_{a}$ depending on $a$, where $X=++\cdots++-$ 
in $\X_{C}$ is used to construct all codewords in $\mathcal{X}_{Sym^{E}}$. Then
\begin{eqnarray*}
\delta_{a}Y_{iC}&=&(\delta_{a}C^{i}X\delta_{a}C^{2n-1-i}X)_{C}\\
&=&(C^{a^{-1}i}\delta_{a}XC^{a^{-1}(2n-1-i)}\delta_{a}X)_{C}\\
&=&(C^{k_{a}}(C^{a^{-1}i}XC^{2n-a^{-1}(i+1)}X))_{C}\\
&=&\left(C^{\frac{a^{-1}-1}{2}}(C^{a^{-1}i}XC^{2n-a^{-1}(i+1)}X)\right)_{C}\\
&=&\left(C^{a^{-1}i+\frac{a^{-1}-1}{2}}XC^{2n-a^{-1}(i+1)+\frac{a^{-1}-1}{2}}X\right)_{C}\\
&=&Y_{\left(a^{-1}i+\frac{a^{-1}-1}{2}\right)C}.
\end{eqnarray*}
\end{proof}

We will use this lemma for to show that the symmetric binary sequences form an $S$-subgroup in
$\mathfrak{S}(\Z_{2}^{n},H_{n}\Delta_{n}C_{n})$

\begin{theorem}\label{theo_S_subgroup_Sym_HDC}
$Sym(\Z_{2C}^{n})$ is an $S$-subgroup in $\mathfrak{S}(\Z_{2}^{n},H_{n}\Delta_{n}C_{n})$.
\end{theorem}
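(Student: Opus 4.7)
The plan is to invoke the earlier theorem showing that $Sym(\Z_{2C}^{n})$ is an $S$-subgroup in $\mathfrak{S}(\Z_{2}^{n},H_{n}C_{n})$, which already provides closure of the underlying subgroup of $\Z_{2}^{n}$ under the actions of $C_{n}$ and $R$. Since $H_{n}\Delta_{n}C_{n}$ is generated inside $Aut(\Z_{2}^{n})$ by $H_{n}$, $\Delta_{n}$, and $C_{n}$, it suffices to check that $Sym(\Z_{2C}^{n})$ is stable under the decimation action of every $\delta_{a}\in\Delta_{n}$.

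The key observation is that $Sym(\Z_{2C}^{n})$, as a subgroup of $\Z_{2}^{n}$, is generated by the $C_{n}$-orbits of the codewords $Y_{i}$ in $\X_{Sym^{O}}$ (for $n$ odd) or $\X_{Sym^{E}}$ (for $n$ even). Because $\delta_{a}$ is an automorphism of $\Z_{2}^{n}$ and hence a group homomorphism, it is enough to verify that $\delta_{a}$ sends each generating orbit $Y_{iC}$ back into $Sym(\Z_{2C}^{n})$. This is exactly what the preceding lemma provides: in both the odd and even cases, $\delta_{a}Y_{iC}$ is again a basic set $Y_{jC}$ belonging to the same generating family, so $\delta_{a}Y_{i}=C^{m}Y_{j}$ for some shift $m$ and some generator $Y_{j}\in\X_{Sym^{O}}$ (respectively $\X_{Sym^{E}}$).

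Thus for any $Y\in Sym(\Z_{2C}^{n})$ written as a product $\prod_{\ell}C^{k_{\ell}}Y_{i_{\ell}}$, the commutation relation $\delta_{a}C^{k}=C^{ka^{-1}}\delta_{a}$ together with the lemma yields
\begin{equation*}
\delta_{a}Y=\prod_{\ell}C^{k_{\ell}a^{-1}}\delta_{a}Y_{i_{\ell}}=\prod_{\ell}C^{k_{\ell}a^{-1}+m_{\ell}}Y_{j_{\ell}},
\end{equation*}
which is again a product of $C_{n}$-shifts of generating codewords, hence lies in $Sym(\Z_{2C}^{n})$. This establishes $\Delta_{n}$-stability, and combined with the previously established $H_{n}C_{n}$-stability it delivers the full $H_{n}\Delta_{n}C_{n}$-stability required by the theorem.

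The main obstacle I anticipate is bookkeeping: one must handle the two cases $n$ odd and $n$ even in parallel (since $\X_{Sym^{O}}$ and $\X_{Sym^{E}}$ differ both in the codewords they contain and in the explicit shift formula produced by the lemma), and one must track the cyclic shifts $m_{\ell}$ carefully to confirm that $\delta_{a}Y_{i_{\ell}}$ really lands in a $C_{n}$-orbit of another generator rather than merely somewhere in $\Z_{2}^{n}$. Since the lemma explicitly identifies the target index $j$ in each case, no further nontrivial computation is required, and the proof reduces to stringing together the lemma, the commutation relation, and the earlier closure properties established for $Sym(\Z_{2C}^{n})$.
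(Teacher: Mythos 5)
Your proposal is correct and follows essentially the same route as the paper: both arguments combine the already-established $H_{n}C_{n}$-closure with the preceding lemma, which shows that $\Delta_{n}$ permutes the generating codewords of $\X_{Sym^{O}}$ and $\X_{Sym^{E}}$ (up to cyclic shift), so that these are $\Delta_{n}$-codes and the generated $S$-subgroup is stable under the full group $H_{n}\Delta_{n}C_{n}$. Your version merely spells out, via the commutation relation and the homomorphism property of $\delta_{a}$, the step the paper compresses into ``hence they are $\Delta_{n}$-codes.''
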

\begin{proof}
Clearly $Sym(\Z_{2C}^{n})$ is an $S$-subgroup of $\mathfrak{S}(\Z_{2}^{n},H_{n}C_{n})$. From 
the previous lemma is followed that $\Delta_{n}$ defines a partition on $\X_{Sym^{E}}$ and 
$\X_{Sym^{O}}$. Hence they are $\Delta_{n}$-codes and $Sym(\Z_{2C}^{n})$ is an $S$-subgroup of 
$\mathfrak{S}(\Z_{2}^{n},H_{n}\Delta_{n}C_{n})$.
\end{proof}

\begin{theorem}
$\mathbb{G}_{dC}(n)$ is an $S$-subgroup in $\mathfrak{S}(\Z_{2}^{n},H_{n}\Delta_{n}C_{n})$.
\end{theorem}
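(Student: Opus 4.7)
The plan is to leverage the two immediately preceding results, Theorem \ref{theo_G_HC} and Theorem \ref{theo_G_DC}, which together already encode almost everything needed. Recall that both proofs proceed by showing that the explicit generating code $\X_{F,d}=\{A_{0,d}X,A_{1,d}X,\ldots,A_{d-1,d}X\}$ is preserved (as a set) under the action of $H_n$ and $\Delta_n$ respectively, and by construction $C_n$ acts on $\X_{F,d}$ via $C^{i}A_{j,d}X=A_{i+j,d}X$. So my first step is simply to record that $H_n\X_{F,d}=\X_{F,d}$, $\Delta_n\X_{F,d}=\X_{F,d}$, and $C_n\X_{F,d}=\X_{F,d}$, with the explicit orbit formulas copied from the proofs of Theorems \ref{theo_G_HC} and \ref{theo_G_DC}.

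Next I would observe that the subgroup $H_n\Delta_n C_n$ of $Aut(\Z_2^n)$ is generated as a set of permutations by the union of these three subgroups, so any element of $H_n\Delta_n C_n$ can be written as a product of factors from $H_n$, $\Delta_n$, and $C_n$. Since each factor maps $\X_{F,d}$ into itself, any product does as well, giving $(H_n\Delta_n C_n)\X_{F,d}=\X_{F,d}$. In other words, $\X_{F,d}$ is an $(H_n\Delta_n C_n)$-stable code on $\X_n$.

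Once that is established, the conclusion is immediate from Theorem \ref{theo_basic_S_subgroup}: any permutation automorphic subgroup $G\leq Aut(\Z_2^n)$ acting on a set $\X\subseteq\Z_2^n$ produces an $S$-subgroup $\X^{*}$ in $\mathfrak{S}(\Z_2^n,G)$. Taking $G=H_n\Delta_n C_n$ and $\X=\X_{F,d}$, we conclude $\mathbb{G}_d(n)=\X_{F,d}^{*}$ is an $S$-subgroup of $\mathfrak{S}(\Z_2^n,H_n\Delta_n C_n)$, and passing to $C_n$-orbits (which is compatible with the $H_n\Delta_n$ action on orbits, exactly as used in the analogous proof of Theorem \ref{theo_S_subgroup_Sym_HDC}) yields that $\mathbb{G}_{dC}(n)$ is an $S$-subgroup in $\mathfrak{S}(\Z_2^n,H_n\Delta_n C_n)$.

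The only potential obstacle is a bookkeeping one, namely making sure the three actions really do commute nicely enough on $\X_{F,d}$ that the products cover all of $H_n\Delta_n C_n$; but this is already implicit in the commutation relations (\ref{comm_RC}) and (\ref{comm_DC}) used earlier, together with the orbit descriptions $RA_{i,d}X=A_{d-i+1,d}X$ and $\delta_aA_{i,d}X=A_{ia^{-1},d}X$ derived in the earlier theorems. Thus the proof is essentially a one-line appeal to the preceding two theorems plus Theorem \ref{theo_basic_S_subgroup}, and no new computation is required.
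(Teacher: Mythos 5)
Your proof is correct and takes essentially the same route as the paper, whose entire proof of this theorem is the single line ``Follows from theorems \ref{theo_G_HC} and \ref{theo_G_DC}.'' You have merely made explicit the reasoning the paper leaves implicit, namely that $\X_{F,d}$ is stabilized by each of $H_n$, $\Delta_n$, $C_n$ and hence by every product of their elements, after which Theorem \ref{theo_basic_S_subgroup} applies.
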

\begin{proof}
Follows from theorems \ref{theo_G_HC} and \ref{theo_G_DC}.
\end{proof}

\begin{theorem}
$\mathbb{I}_{nC}(a)$ is an $S$-subgroup in $\mathfrak{S}(\Z_{2}^{n},H_{n}\Delta_{n}C_{n})$.
\end{theorem}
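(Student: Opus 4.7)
The plan is to follow the pattern of Theorem \ref{theo_S_subgroup_Sym_HDC}: by the preceding section, $\mathbb{I}_{nC}(a)$ is already an $S$-subgroup in $\mathfrak{S}(\Z_{2}^{n},\Delta_{n}C_{n})$, so it only remains to check that the reflection $R$ permutes the basic sets of $\mathbb{I}_{nC}(a)$. Once this is done, the full group $H_{n}\Delta_{n}C_{n}$ stabilizes $\mathbb{I}_{nC}(a)$ and the statement follows from the previous theorem.

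The first step is to compute the action of $R$ on the codewords $\mathsf{C}_{s}X$ of $\X_{\mathbb{I}(a)}$. Taking $X=-++\cdots++$, a direct check gives $RX=CX$, and hence by (\ref{comm_RC}) one has $R(C^{k}X)=C^{n-k}RX=C^{-k+1}X$ for every $k$. Consequently
\begin{equation*}
R(\mathsf{C}_{s}X)=\prod_{k=0}^{t-1}R(C^{sa^{k}}X)=\prod_{k=0}^{t-1}C(C^{-sa^{k}}X)=C\cdot\mathsf{C}_{-s}X,
\end{equation*}
where the last equality uses that $C\in Aut(\Z_{2}^{n})$ distributes over the product in the abelian group $\Z_{2}^{n}$. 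Since $\mathsf{C}_{-s}=\{-s,-sa,\ldots,-sa^{t-1}\}$ is again a cyclotomic coset of $a$ modulo $n$, the word $\mathsf{C}_{-s}X$ is another codeword of $\X_{\mathbb{I}(a)}$ (up to the choice of representative), so $(R\mathsf{C}_{s}X)_{C}=(\mathsf{C}_{-s}X)_{C}$ is a basic set in $\mathbb{I}_{nC}(a)$.

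Extending this to an arbitrary $Y=\mathsf{C}_{s_{1}}^{\epsilon_{1}}X\cdots\mathsf{C}_{s_{r}}^{\epsilon_{r}}X\in\mathbb{I}_{n}(a)$, the homomorphism property of $R$ and $C$ give
\begin{equation*}
RY=C^{\sum_{i}\epsilon_{i}}\cdot\prod_{i=1}^{r}\mathsf{C}_{-s_{i}}^{\epsilon_{i}}X,
\end{equation*}
so $RY$ is a $C$-shift of the word $Y'=\prod_{i}\mathsf{C}_{-s_{i}}^{\epsilon_{i}}X$, and $Y'\in\mathbb{I}_{n}(a)$ by the same reasoning used for the individual codewords. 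Therefore $(RY)_{C}=(Y')_{C}\in\mathbb{I}_{nC}(a)$, proving that $R$ maps $\mathbb{I}_{nC}(a)$ into itself. Combined with the preceding section, $\mathbb{I}_{nC}(a)$ is stable under $H_{n}\Delta_{n}C_{n}$ and is therefore an $S$-subgroup of $\mathfrak{S}(\Z_{2}^{n},H_{n}\Delta_{n}C_{n})$.

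The only genuine obstacle is the identity $R(\mathsf{C}_{s}X)=C\cdot\mathsf{C}_{-s}X$: one must carefully invoke (\ref{comm_RC}), the chosen value $RX=CX$, and the abelianness of $\Z_{2}^{n}$ so that a common $C$-factor can be pulled out of the product. Unlike the $Sym$ case of Theorem \ref{theo_S_subgroup_Sym_HDC}, the shift $C$ is independent of the parameter $a$, which makes the bookkeeping slightly simpler; everything else is mechanical.
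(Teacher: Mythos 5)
Your proof is correct and follows essentially the same route as the paper: both use the commutation relation $RC^{i}=C^{n-i}R$ together with $RX=CX$ to show $R(\mathsf{C}_{s}X)_{C}=(\mathsf{C}_{n-s}X)_{C}=(\mathsf{C}_{-s}X)_{C}$, so that $R$ permutes the codewords of $\X_{\mathbb{I}(a)}$ up to a cyclic shift, and then combine this with the $\Delta_{n}C_{n}$ result from the preceding section. The only quibble is that, since $C$ is a homomorphism, the factor pulled out of $RY$ is a single $C$ rather than $C^{\sum_{i}\epsilon_{i}}$; this is harmless because any power of $C$ leaves the orbit $(\,\cdot\,)_{C}$ unchanged.
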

\begin{proof}
From theorem \ref{theo_S_subgroup_Sym_HDC} the case $a=n-1$ is 
excluded. In the previous section already was proved that $\mathbb{I}_{nC}(a)$ is an $S$-subgroup in
$\mathfrak{S}(\Z_{2}^{n},\Delta_{n}C_{n})$. Now, we wish to show that $H_{n}$ defines a partition
on $\mathbb{I}_{nC}(a)$ by using (\ref{comm_RC}). Take the codeword $\mathsf{C}_{s}X$ in 
$\X_{\mathbb{I}_{n}(a)}$. We have then
\begin{eqnarray*}
R(\mathsf{C}_{s}X)_{C}&=&R(C^{s}XC^{sa}X\cdots C^{sa^{t_{s}-1}}X)_{C}\\
&=&(RC^{s}XRC^{sa}X\cdots RC^{sa^{t_{s}-1}}X)_{C}\\
&=&(C^{n-s}CXC^{n-sa}CX\cdots C^{n-sa^{t_{s}-1}}CX)_{C}\\
&=&(C(C^{n-s}XC^{n-sa}X\cdots C^{n-sa^{t_{s}-1}}X))_{C}\\
&=&(C^{n-s}XC^{(n-s)a}X\cdots C^{(n-s)a^{t_{s}-1}}X))_{C}\\
&=&(\mathsf{C}_{n-s}X)_{C}.
\end{eqnarray*}
\end{proof}

\Addresses

\end{document}